\newcommand{\R}{\mathbb{R}}	
\newcommand{\N}{\mathbb{N}}	
\newcommand{\Z}{\mathbb{Z}}	
\newcommand{\eps}{\varepsilon}	
\newcommand{\pa}{\partial}		
\newcommand{\Div}{\textrm{div}\,}	
\newcommand{\fun}[5]{  			
\begin{array}{cccc}
{#1}\,: & #2 & \to & #3 \\
\phantom{{#1}\,:\,} & #4 & \mapsto & #5
\end{array} }
\newcommand{\na}{\nabla}		
\newcommand{\di}{\displaystyle}  
\newcommand{\utau}{u^{(\tau)}}
\newcommand{\wtau}{w^{(\tau)}}
\newcommand{\Mtau}{M^{(\tau)}}
\newcommand{\cD}{\mathcal{D}}
\newcommand{\Hc}{\mathcal{H}}
\newcommand{\db}{\mathbf{d}}
\newcommand{\charf}{ {\mbox{\large\raisebox{2pt}{$\chi$}}} }
\newtheorem{theorem}{Theorem}   
\newtheorem{lemma}[theorem]{Lemma}   
\newtheorem{remark}[theorem]{Remark}
\begin{document}

\title[cross-diffusion system modeling biofilm growth]{Analysis of a degenerate and singular volume-filling cross-diffusion system modeling biofilm growth}

\author[E. S. Daus]{Esther S. Daus}
\address{Institute for Analysis and Scientific Computing, Vienna University of  
Technology, Wiedner Hauptstra\ss e 8--10, 1040 Wien, Austria}
\email{esther.daus@tuwien.ac.at}

\author[J.-P. Mili\v si\'c]{Josipa-Pina Mili\v si\'c}
\address{University of Zagreb, Faculty of Electrical Engineering and Computing, Unska 3, 10000 Zagreb, Croatia}
\email{pina.milisic@fer.hr}

\author[N. Zamponi]{Nicola Zamponi}
\address{Institute for Analysis and Scientific Computing, Vienna University of  
Technology, Wiedner Hauptstra\ss e 8--10, 1040 Wien, Austria}
\email{nicola.zamponi@tuwien.ac.at}

\date{\today}
\thanks{The first and the third author acknowledge partial support from   
the Austrian Science Fund (FWF), grants P22108, P24304, W1245, P27352 and P30000. The second author acknowledges support from the Croatian Science Foundation HRZZ-IP-2013-11-3955. All three authors were partially supported by the bilaterial project No.~HR 04/2018 of the Austrian Exchange Sevice OeAD together with the Ministry of Science and Education of the Republic of Croatia MZO}

\begin{abstract} 
 We analyze the mathematical properties of a multi-species biofilm cross-diffusion model together with very general reaction terms and mixed Dirichlet-Neumann boundary conditions on a bounded domain. This model belongs to the class of volume-filling type cross-diffusion systems which exhibit a porous medium-type degeneracy when the total biomass vanishes as well as a superdiffusion-type singularity when the biomass reaches its maximum cell capacity, which make the analysis extremely challenging. The equations also admit a very interesting non-standard entropy structure. We prove the existence of global-in-time weak solutions, study the asymptotic behavior and the uniqueness of the solutions, and complement the analysis by numerical simulations that illustrate the theoretically obtained results.
\end{abstract}

\keywords{Biofilm model, volume-filling model, entropy method, global existence of weak solutions, longtime convergence, uniqueness of solutions.}  

\subjclass[2000]{35K51, 35K65, 35K67, 35Q92}  

\maketitle

\section{Introduction}
In this paper we study the mathematical properties of a multi-species cross-diffusion biofilm model recently proposed by Rahman, Sudarsan and Eberl \cite{RaSuEb15}, which describes the local mixing effects between different components of multi-species biofilm colonies. 
These effects are extremely 
useful in waste\-water engineering, where different processes (like aerobic and anoxic 
processes or simultaneous sulfate reduction and nitrogen removal) take place 
simultaneously. It has been pointed out \cite{RaSuEb15} that when two colonies of different 
species merge, spatial biomass gradients can be observed, leading to a spatially 
heterogeneous distribution of biomass. These phenomena can be described by 
cross diffusion, which models how the diffusion of one species is influenced by the 
concentration gradient of the other species in diffusive multi-species systems. Recently, a cross-diffusion biofilm model (see \eqref{cross.diff.sys.1}) was introduced by Rahman, Sudarsan and Eberl \cite{RaSuEb15}, which reflects the same properties as the single-species nonlinear diffusion model \cite{EPL01} (see \eqref{single.species.model}) constructed from experiments, namely a porous-medium type degeneracy when the local biomass vanishes, which leads to a finite speed of propagation of the interface, and a singularity when the biomass reaches the maximum capacity, which guarantees the boundedness of the total mass. It can be formally derived from a space-discrete random-walk lattice model \cite{Ost11, RaSuEb15, ZaJu17} (see Appendix). Due to the cross-diffusion structure, standard techniques like maximum principles and regularity theory cannot be used, and since the diffusion matrix is generally neither symmetric nor positive definite, even the local-in-time existence and boundedness of solutions is hard to prove. However, in recent years significant progress has been made in the analysis of cross-diffusion equations by using the entropy methods. These techniques are based on the identification of a structural condition, namely a formal gradient-flow or entropy structure, allowing for a mathematical treatment, see e.g. \cite{BFPS10,BSW12,DLMT15, Ju15,Ju16,ZaJu17}. 

In this article we prove the global-in-time existence of weak solutions to the multi-species cross-diffusion biofilm model \cite{RaSuEb15}, study its long-time behavior and prove uniqueness of solutions, and we complement our results by some numerical simulations with finite elements by using the free software DUNE \cite{Dune}. For the analytical results, we significantly extend the entropy method in \cite{Ju15}, which is based upon the idea of transforming the system into so called entropy variables such that the diffusion matrix in the new formulation is positive definite. This approach was adapted to a class of degenerate volume-filling type models\footnote{Volume-filling models take into account the fact that concentration may saturate.}  in \cite{ZaJu17}.
However, compared to \cite{ZaJu17}, we have to deal with an additional singularity, which significantly complicates the analysis, but surprisingly also helps to handle very general (even singular) reaction terms. 
The model we study also admits a non-standard entropy structure, which is an interesting mathematical issue by itself; see e.g.~\cite{DDJ17,JMZ18,JZ17} for other works in this direction. 
\medskip

 We are interested in a reaction-cross-diffusion system with volume-filling of the form
\begin{align}\label{cross.diff.sys.00}
 \partial_t u_i - \sum_{j=1}^n\Div \!\!\left(A_{ij}(u)\nabla u_j\right) = r_i(u)\qquad (i=1,\ldots,n)\quad\mbox{on }\Omega,~~t>0. 
\end{align}
Here, $\Omega\subseteq\mathbb{R}^d~ (d\geq1)$ is a bounded domain with Lipschitz boundary, $A(u)=(A_{ij}(u)) \in \mathbb{R}^{n \times n}$ is the diffusion matrix, $u=(u_1,\ldots,u_n):\Omega \times (0,\infty) \to \mathbb{R}^n$ is the vector of proportions of the species within the biofilm, where $u_i=u_i(t,x)$ depends on the time $t$ and the spatial variable $x$ on $\Omega$, and $r=(r_1,\ldots,r_n)$ is the vector of reactions. 
The diffusion coefficients $A_{ij}$ are derived under suitable modeling assumptions in a (formal) diffusive limit from a space-discrete lattice model \cite{Ost11, RSE16, ZaJu17, RaSuEb15} (sketched in the Appendix). They have the form 
\begin{align}\label{diff.coeff}
 A_{ij}(u)= \alpha_i\delta_{ij}p(M)q(M) + \alpha_iu_i(p(M)q'(M) - p'(M)q(M)).
\end{align}
Here, $M=\sum_{i=1}^n u_i$ denotes the total biomass, $\delta_{ij}$ is the Kronecker delta symbol, while the functions $p$ and $q$ measure how favorable it is for species $u_i$ to leave or to arrive at a certain cell in the underlying discrete model.
The constants $\alpha_i>0$ measure how fast biomass moves between neighboring sites of the underlying discrete lattice model (see Appendix).
We point out that eqs.~\eqref{cross.diff.sys.00}--\eqref{diff.coeff} can be also written as
\begin{align}\label{cross.diff.sys.1}
	\partial_t u_i - \alpha_i \Div \!\!\left(p^2(M) \nabla\left(\frac{u_iq(M)}{p(M)}\right)\right) = r_i(u),\quad i=1, \ldots ,n.
\end{align}
For simplicity, we have assumed that $p=p(M)$ and $q=q(M)$ only depend on the total biomass $M$ and are the same for all species $i=1,\ldots,n$. The total biomass $M$ cannot exceed a saturation value (normalized to 1),
which depends on the maximum cell capacity (hence the denomination ``volume filling''); i.e.~$M\leq 1$ must hold during the time evolution of the system.

We supplement the model with initial conditions and mixed Dirichlet-Neumann boundary conditions on the bounded domain $\Omega\subset\mathbb{R}^d~ (d\geq1)$ 
\begin{align}\label{cond.b.0}
u(0,\cdot) & = u^0(\cdot)>0 \quad \mbox{ in } \; 
\Omega,\nonumber\\
u=u_D>0 \quad \mbox{ on } \; \Gamma_D, & \qquad
\nu \cdot\nabla u = 0 \quad \mbox{ on } \; \Gamma_N,\qquad t>0,
\end{align}
where $\partial \Omega=\Gamma_D \cup \Gamma_N$, $\Gamma_D \cap \Gamma_N = \emptyset$ and $|\Gamma_D|>0$. 

For simplicity we have assumed that $u_D = (u_{D,1},\ldots,u_{D,n})\in (0,\infty)^n$ is a constant vector with positive components, but also $x-$dependent boundary data can be treated; see Remark \ref{Rem.Dir} for details. For consistency with the constraint $M\leq 1$, we have to assume
\begin{align}\label{cond.ini.bc}
M_D=\sum_{i=1}^n u_{D,i}<1, \qquad \sup_{x \in \Omega} M_0(x)=\sum_{i=1}^n \sup_{x \in \Omega} u_{i,0}(x) < 1.
\end{align}
These boundary conditions describe well the behavior of the biofilm at its border, but also homogeneous Neumann boundary conditions can be handled, see Remarks \ref{remark.neu.bc}, \ref{Remark.Neumann.LTB}, \ref{Remark.Neumann.uniq}. 

In order to close the model, the functions $p$ and $q$ need to be chosen appropriately. Following the approach in  \cite{RaSuEb15}, the idea is to derive $p$ and $q$ in \eqref{cross.diff.sys.1} from the single-species biofilm model \cite{EPL01, KHE09} (here without reaction) 
\begin{align}\label{single.species.model}
 \pa_t M - \Div\!\!\left(\frac{M^a}{(1-M)^b}\nabla M \right) = 0, \quad a,b >1
\end{align}
under the natural assumption that the evolution of the multi-species model \eqref{cross.diff.sys.1} reduces to the single-species model \eqref{single.species.model} if all species except one vanish or if all species are identical. Thus, we choose $p$ and $q$ such that
\begin{align}
 p^2(M)\nabla\left(\frac{Mq(M)}{p(M)} \right)= \frac{M^a}{(1-M)^b}\nabla M, \quad a,b > 1,
 \label{Eq.1.0}
\end{align}
and consequently
 \begin{align}
  q(M) = \frac{p(M)}{M} \int_0^M \frac{s^a}{(1-s)^b} \frac{ds}{p(s)^2}, \quad M > 0.
 \label{Def.q}
 \end{align}
 For $p$ we assume that
\begin{align}\mbox{$p$ is decreasing,}\quad
p(1)=0,\quad
\exists \kappa, c>0 :~ \lim_{M\to 1} -(1-M)^{1+\kappa}\frac{p'(M)}{p(M)} = c .
  \label{Def.p}
\end{align} 
These hypothesis are consistent with the modeling assumptions of the single-species model \eqref{single.species.model} in \cite{RSE16}.
The last assumption quantifies how fast $p$ decreases to $0$ for $M\to 1$; in fact, an integration yields the bound
\begin{align}
p(M)\leq C_1\exp(-C_2(1-M)^{-\kappa})\qquad 0<M<1.\label{est.p.1}
\end{align}
This hypothesis on $p$ is not assumed in \cite{RaSuEb15}, and is needed here only for technical reasons in Lemma~\ref{bound.nabla}. 
\medskip

As mentioned before, due to the cross-diffusion structure, standard techniques like maximum principles and regularity theory cannot be used, but still, in recent years lots of progress has been made in the analysis of cross-diffusion equations by identifying a formal gradient-flow or entropy structure \cite{Ju15,Ju16}. Following the approach therein, we assume that there  exists a convex function $h: \mathcal{D} \to \Omega$ called {\em entropy density} with $\mathcal{D}\subseteq \mathbb{R}^n$ such that the matrix $B=A(u)h''(u)^{-1}$ is positive semi-definite, and \eqref{cross.diff.sys.00} can be written as
\begin{align*}
 \pa_t u - \Div\left(B\nabla h'(u)\right) = r(u),
\end{align*}
where $h'$ and $h''$ are the Jacobian and the Hessian of $h$, respectively. This structural assumption has two very useful consequences. 
First, $H[u]=\int_{\Omega}h(u)\,dx$ is a Lyapunov functional along solutions to \eqref{cross.diff.sys.00} and \eqref{diff.coeff} if the reaction term
vanishes, because
\begin{align*}
 \frac{dH}{dt}[u(t)]=\int_{\Omega}h'(u)\cdot\pa_tu\,dx = -\int_{\Omega}\nabla u : h''(u)A(u)\nabla u\,dx = - \int_{\Omega}\nabla w : B\nabla w \,dx \leq 0,
\end{align*}
were $w=h'(u)$ are so called {\em entropy variables}. This often yields gradient-type estimates for $u$ if suitable lower bounds for the matrix $h''(u)A(u)$ are known. Second, if $h'$ is invertible on $\mathcal{D}$, then it holds that $u=(h')^{-1}(w) \in \mathcal{D}$. Consequently we get that if $\mathcal{D}$ is a bounded domain, then we obtain lower and upper bound for $u$ without using a maximum principle. In our case, we require that any solution $u$ to \eqref{cross.diff.sys.00}--\eqref{cond.b.0}
takes values in the set
\begin{equation}\label{Def.D}
\cD = \left\{u\in (0,\infty)^n ~:~ \sum_{i=1}^n u_i < 1\right\}. 
\end{equation}
Moreover, we define the (relative) entropy functional $H[u]$ of the system \eqref{cross.diff.sys.00} as
\begin{equation}\label{H}
H[u] = \int_\Omega h^*(u|u_D) dx , \quad h^*(u|u_D) = h(u) - h(u_D) - h'(u_D)\cdot (u-u_D),
\end{equation}
where the entropy density $h(u)$ is given by
 \begin{align}
  h(u) = \sum_{i=1}^n \left(u_i \log u_i - u_i + 1 \right) 
         + \int_0^M \log \left( \frac{q(s)}{p(s)} \right)\,ds,
 \label{Entropy}
 \end{align}
and thus the entropy variables for $i=1,\ldots,n$ read as
\begin{align}\label{entr.var.2}
w_i&=\frac{\pa}{\pa u_i}h^*(u|u_D) = \frac{\pa h}{\pa u_i}(u)-\frac{\pa h}{\pa u_i}(u_D)
=\log\left(\frac{u_i q(M)}{p(M)}\right)-\log\left(\frac{u_{D,i} q(M_D)}{p(M_D)}\right).
\end{align}
 We can show that the entropy dissipation leads to the following very interesting degenerate-singular entropy estimate
\begin{align*}
 \int_0^T \int_{\Omega} \frac{M^{a-1}|\na M|^2}{(1-M)^{1+b+\kappa}}\,dxdt + \sum_{i=1}^n \int_0^T \int_{\Omega} p(M)q(M)|\na \sqrt{u_i}|^2\,dxdt \leq C\qquad t>0,\nonumber
\end{align*}
which has the same kind of singular-degenerate type structure like the nonlinear diffusion coefficients $M^a/(1-M)^b$ in the single species model \eqref{single.species.model}, i.e. a degeneracy when $M\to 0$ and a singularity when $M\to 1$.
Moreover, it leads to the following uniform bound for the singularity:
\begin{align}\label{bound.sing.int.0}
\int_0^T\int_{\Omega}(1-M)^{1-b-\kappa}dx dt \leq C.
\end{align}
This estimate is essential in order to control the nonlinear terms in the equations; 
furthermore, it implies that $M<1$ a.e.~in $Q_T$, i.e.~saturation of the biofilm is excluded.

\subsection{State of the art}
In the literature, several model classes for biofilms can be found. The first class
consists of deterministic continuous equations based on the one-dimensional
Wanner-Gujer model \cite{WG86}. 
An important assumption of this model is that the volume fractions
occupied by the different species add up to unity. However, no mixing of initially 
separated species can occur under this assumption, which contradicts results of
microscopic experiments, where spatially heterogeneous distributions of biomass 
could be observed. A second model class are stochastic discrete multi-species biofilm
models, which do not need this problematic assumption for the volume fractions, and
the amount of mixing can be decided by the user by formulating local interaction rules.
However, these models have the drawback that mixing is often overemphasized and that the
numerical solution is generally very time-consuming.

In order to compensate the disadvantages of the model classes described above, 
Rahman, Sudarsan, and Eberl \cite{RaSuEb15} introduced a two-species diffusion model
which captures the quantitative amount of local mixing effects between the species 
within a biofilm colony with the help of cross-diffusion terms. The derivations of this system from mass balances or
from discrete lattice models do not (a priori) impose the condition that the
volume fractions must add up to unity. Besides the new cross-diffusion effects, it has two additional difficulties: (i) a porous medium degeneracy when the total biomass $M=0$ is vanishing; (ii)
a super-diffusion singularity when the total biomass equals one $M=1$. Due to property (i) the interface between the aqueous phase and biofilm region propagates with a finite speed. Property (ii) ensures that the solutions of \eqref{cross.diff.sys.00} are bounded by the maximum cell density $M\leq 1$. Moreover, we are able to prove that similarly to the results in \cite{EEZ02, EZE09} it holds that $M<1$ a.e. in $Q_T$ for all $t>0$ in the case of mixed Dirichlet-Neumann boundary conditions
(even for nonzero Dirichlet data; see Step 4 in the proof of Theorem \ref{Thm.Existence} for details). On the other hand, in the case of homogeneous Neumann boundary conditions on the whole $\partial\Omega$ we need to make sure that the total mass $\mathcal{M}(t)=|\Omega|^{-1}\int_{\Omega}M(x,t)dx$ remains strictly smaller than one for any time in order to prevent blowup, see Remark \ref{remark.neu.bc}, which is again similar to the results in \cite{EEZ02, EZE09}.
While an existence analysis for the single-species biofilm model is available in \cite{EZE09}, the mathematical analysis of the multi-species model \eqref{cross.diff.sys.1}--\eqref{cond.b.0} has not been carried out so far (up to our knowledge). We note that Laoshen Li studied traveling wave solutions and instability conditions of a reaction-cross-diffusion biofilm model in \cite{Li14}, and recently Schulz and Knabner analyzed an effective model for biofilm growth in \cite{SK17, SK2017}. Our technique is based on the boundedness-by-entropy method of A.~J\"{u}ngel (see \cite[Theorem 3]{Ju15} and \cite{Ju16}), which was refined for a general class of degenerate volume-filling type cross-diffusion models in \cite{ZaJu17}. However, the model considered in this article does not only exhibit a degeneracy at $M=0$, but also a very interesting singularity at $M=1$, which goes far beyond the framework of \cite{ZaJu17}. We point out that the mentioned singularity significantly complicates the analysis, but on the other hand it also yields an additional a priori bound of $(1-M)$ to a negative power (see \eqref{bound.sing.int.0}), which allows to handle very general (even singular) reaction terms (see assumptions \eqref{hp.r1}--\eqref{hp.r4} for the reaction terms). Note that in \cite{ZaJu17} no reaction terms were treated, and only no-flux boundary conditions were considered.

\subsection{Mathematical assumptions on the reaction terms}\label{Sec:model}
We assume that 
\begin{equation}\label{hp.r1}
r(u) = r^{D}(u) + \tilde r(u) ,
\end{equation}
with $r^D, \tilde r$ continuous in the set $\{ M<1\}$ satisfying the following conditions:
\begin{align}
\label{hp.r2}
& \exists\lambda_r\geq 0 : ~~
\sum_{j=1}^n r_j^D(u)\left(\frac{\pa h}{\pa u_j}(u)-\frac{\pa h}{\pa u_j}(u_D)\right)\leq \lambda_r\Big(1 + h^*(u|u_D)\Big),\\
\label{hp.r3}
& \exists C_r\geq 0, ~ 0\leq \mu < b-1, ~ s>0: ~~ |\tilde r_i(u)|\leq \frac{C_r u_i^s}{(1-M)^\mu}\quad (i=1,\ldots,n),\\
\label{hp.r4}
& \exists C_r'\geq 0, ~ 0\leq \eta < b+\kappa-1 : ~~ |r_i^D(u)|\leq \frac{C_r'}{(1-M)^\eta}\quad (i=1,\ldots,n).
\end{align}
Note that $r(u)$ decomposes into a ``dissipative'' part $r^D$ (i.e. a part which can be controlled by the entropy density, see \cite[assumption $(H3)$]{Ju15}), and a remainder $\tilde r$
which can be controlled by means of the entropy dissipation (see proof of Lemma \ref{lem.dei}). 
Let us point out that these reaction terms are rather general, in fact, even singular reaction terms are allowed.

\subsection{Structure of the paper.}
The main results are given in Section~\ref{Sec.Main}.
Proofs of some auxiliary results, like the asymptotic behavior of $p$ and $q$, the convexity of $h$, the invertibility of $h'$ and the lower bound for the entropy dissipation are given in Section~\ref{Sec.Aux}. 
Sections~\ref{Sec.Exis}, \ref{Sec.LTB} and \ref{Sec.Uniq} are devoted to proofs of the existence, long-time behavior and the uniqueness result, respectively. 
Finally, in Section~\ref{sec.appendix} we discuss the formal derivation of the model and the underlying modeling assumptions (Subsection \ref{Sec.Formal.der}),
we show some numerical simulations (Subsection~\ref{Sec.Num}), and 
we prove a non-standard version of the Poincar\'{e} inequality used within this paper (Subsection~\ref{appendix: aux.res}).
\section{Main results}\label{Sec.Main}
The first result we prove is about the global-in-time existence of weak solutions to \eqref{cross.diff.sys.1}--\eqref{cond.b.0}.
In the following, $Q_T \equiv \Omega\times (0,T)$ for every $T>0$. 
\begin{theorem}[Existence theorem]\label{Thm.Existence}
Under the assumptions 
\eqref{Def.q}, \eqref{Def.p}, \eqref{hp.r1}--\eqref{hp.r4},
eqs.~\eqref{cross.diff.sys.1}--\eqref{cond.b.0} have a solution $u : \Omega\times (0,\infty)\to\R^n$ such that,
for every $T>0$ and $1\leq i\leq n$,
\begin{align*}
& u_i\geq 0,~~ M=\sum_{j=1}^n u_j < 1\quad\mbox{a.e. in }Q_T,\\
& M^{\frac{a+1}{2}}\in L^2(0,T; H^1(\Omega)),\quad M^{\frac{a+1}{2}}\na u_i\in L^2(Q_T),\\
& (1-M)^{1-\kappa}\in L^\infty(0,T; L^1(\Omega)),\quad (1-M)^{1-b-\kappa}\in L^1(Q_T),\\
& \pa_t u_i\in (L^\frac{\rho+1}{\rho}(0,T; W^{1,\frac{\rho+1}{\rho}}(\Omega))\cap L^{\frac{b+\kappa-1}{b+\kappa-1-\eta}}(Q_T))',
\end{align*}
where $a$, $b$, $\kappa$, $\eta$ are as in \eqref{Def.q}, \eqref{Def.p}, \eqref{hp.r4}, and $\rho = \min\{1,\kappa/(b-1)\}$.
Moreover, for any $t>0$ the following entropy inequality holds:
\begin{align}\label{entr.inequ.2}
  &\frac{d}{dt}\int_{\Omega}h^*(u|u_D)\,dx + 2\sum_{i=1}^n\alpha_i\int_\Omega p(M)^2\left|\nabla\sqrt{\frac{u_i q(M)}{p(M)}}\right|^2 dx\\
   &\qquad\leq C_1\int_{\Omega}h^*(u|u_D)\,dx + C_2,\nonumber
 \end{align}
where $C_1$ and $C_2$ are some suitable nonnegative constants. Finally, $C_1=C_2=0$ if $\lambda_r=C_r=0$ in \eqref{hp.r2} {and} \eqref{hp.r3}.
\end{theorem}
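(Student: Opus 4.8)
The plan is to follow the boundedness-by-entropy method of \cite{Ju15} combined with the regularization-by-time-discretization strategy of \cite{ZaJu17}, adapted to the additional singularity at $M=1$. First I would reformulate the system in the entropy variables $w=h'(u)-h'(u_D)$, using that $h$ defined in \eqref{Entropy} is convex on $\cD$ and $h':\cD\to\R^n$ is a bijection onto $\R^n$ (these are the auxiliary facts proved in Section~\ref{Sec.Aux}, together with the asymptotics of $p$, $q$), so that $u=(h')^{-1}(w+h'(u_D))$ automatically takes values in $\cD$; this replaces any maximum principle and gives $0\le u_i$, $M<1$ for free on the approximate level. For fixed $\tau>0$ and $N\in\N$ with $\tau N=T$, I would solve iteratively the implicit-in-time, elliptic-regularized problems
\begin{align*}
\frac1\tau\int_\Omega(u^k-u^{k-1})\cdot\phi\,dx + \int_\Omega\nabla\phi:B(u^k)\nabla w^k\,dx + \eps\int_\Omega\big(\nabla w^k:\nabla\phi + w^k\cdot\phi\big)\,dx = \int_\Omega r(u^k)\cdot\phi\,dx
\end{align*}
for $w^k\in H^1(\Omega;\R^n)$ with $w^k=0$ on $\Gamma_D$, where $u^k=(h')^{-1}(w^k+h'(u_D))$ and $B=A(u)h''(u)^{-1}$; existence of $w^k$ follows from the Lax–Milgram/Leray–Schauder (or Browder–Minty) machinery exactly as in \cite[Theorem 3]{Ju15}, using the positive semidefiniteness of $B$ and the $\eps$-coercivity. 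The singular reaction term $\tilde r$ and the singular $r^D$ must be handled at this stage by a truncation $M\mapsto\min\{M,1-1/m\}$ inside $r$, to be removed after the a priori bounds are in place.

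The heart of the argument is the discrete entropy inequality. Testing the approximate equation with $\phi=w^k$, using convexity of $h^*(\cdot|u_D)$ to get $\int_\Omega(h^*(u^k|u_D)-h^*(u^{k-1}|u_D))\,dx\le\tau\int_\Omega(u^k-u^{k-1})/\tau\cdot w^k\,dx$, and inserting the precise lower bound for $\nabla w:B\nabla w = \nabla u:h''(u)A(u)\nabla u$ — which by the choice \eqref{Eq.1.0}–\eqref{Def.q} of $p,q$ is exactly
\begin{align*}
\nabla u:h''(u)A(u)\nabla u = \sum_{i=1}^n\alpha_i\,p(M)^2\Big|\nabla\sqrt{u_iq(M)/p(M)}\Big|^2 \ \gtrsim\ \frac{M^{a-1}|\nabla M|^2}{(1-M)^{1+b+\kappa}} + \sum_i p(M)q(M)|\nabla\sqrt{u_i}|^2,
\end{align*}
this being the content of the entropy-dissipation lemma of Section~\ref{Sec.Aux} — I would absorb the reaction contribution: the $r^D$ part via \eqref{hp.r2} (Gronwall), and the $\tilde r$ part via \eqref{hp.r3}, estimating $\sum_i\tilde r_i(u^k)w_i^k$ using $|w_i|\lesssim|\log u_i|+(1-M)^{-\kappa}+\dots$ against the entropy-dissipation terms $p(M)q(M)|\nabla\sqrt{u_i}|^2$ and the singular bound $\int(1-M)^{1-b-\kappa}$ coming from \eqref{bound.sing.int.0}, with $\mu<b-1$ and $\eta<b+\kappa-1$ ensuring the powers of $(1-M)^{-1}$ stay subcritical (Young's inequality). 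Summing over $k$ yields the discrete analogue of \eqref{entr.inequ.2} and hence uniform (in $\tau$, $\eps$, $m$) bounds on $M^{(a+1)/2}$ in $L^2(0,T;H^1)$, on $M^{(a+1)/2}\nabla u_i$ in $L^2(Q_T)$, on $(1-M)^{1-\kappa}$ in $L^\infty_tL^1_x$, on $(1-M)^{1-b-\kappa}$ in $L^1(Q_T)$, and an $\eps$-bound $\sqrt\eps\,w^{(\tau)}$ in $L^2(0,T;H^1)$.

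Finally I would pass to the limit. From the uniform estimates and the discrete equation one gets a uniform bound for the discrete time derivative in the dual space stated in the theorem, so an Aubin–Lions–Simon argument (the nonstandard Poincaré inequality of Subsection~\ref{appendix: aux.res} is used to control $u^{(\tau)}-u_D$ by the gradient terms despite the degeneracy) gives strong convergence $u^{(\tau)}\to u$ a.e.\ and in suitable $L^p(Q_T)$ as $\tau\to0$; the singular-integrability bound $\int(1-M)^{1-b-\kappa}<\infty$ forces $M<1$ a.e., which is what legitimizes removing the truncation parameter $m$ and also gives equi-integrability of the reaction terms (by \eqref{hp.r3}–\eqref{hp.r4} with $\mu<b-1$, $\eta<b+\kappa-1$) so that $r(u^{(\tau)})\to r(u)$ in $L^1$. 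One sends $\eps\to0$ first (or simultaneously with $\tau$, coupling $\eps=\eps(\tau)\to0$) to kill the regularizing terms, identifies the flux limit $B(u)\nabla w = \sum_i\alpha_i p(M)^2\nabla(u_iq(M)/p(M))$ using the a.e.\ convergence and the gradient bounds, and obtains a weak solution of \eqref{cross.diff.sys.1}–\eqref{cond.b.0}; the entropy inequality \eqref{entr.inequ.2} survives the limit by weak lower semicontinuity of the convex dissipation functional and Fatou, with $C_1=C_2=0$ precisely when $\lambda_r=C_r=0$. The main obstacle I anticipate is the reaction estimate in the discrete entropy inequality: one must carefully match the growth of the entropy variables $w_i$ near $\{M=1\}$ and near $\{u_i=0\}$ against exactly the degenerate-singular dissipation terms and the bound \eqref{bound.sing.int.0}, and this is where the strict inequalities $\mu<b-1$, $\eta<b+\kappa-1$ and the technical decay hypothesis \eqref{Def.p} on $p$ (via Lemma~\ref{bound.nabla}) are indispensable; a secondary difficulty is ensuring the degenerate Poincaré-type control of $u-u_D$ so that Aubin–Lions applies despite the vanishing of $p(M)q(M)$ at $M=0$.
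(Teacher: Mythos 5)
Your plan follows the paper's route in all essentials (entropy variables, implicit time discretization, a discrete entropy inequality whose dissipation is bounded below via Lemma~\ref{bound.nabla}, absorption of the singular reactions through the bound on $(1-M)^{1-b-\kappa}$ using $\mu<b-1$, $\eta<b+\kappa-1$, uniform estimates, compactness, and lower semicontinuity for \eqref{entr.inequ.2}), but two steps would not go through as you wrote them.

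First, the regularization. You add $\eps\int_\Omega(\nabla w^k:\nabla\phi+w^k\cdot\phi)\,dx$ and then truncate $M$ inside $r$. An $H^1$ regularization does not give $w\in L^\infty(\Omega)$ for $d\geq 2$, and without an $L^\infty$ bound on $w$ you do not get $\sup_\Omega M<1$ and $\inf_\Omega u_i>0$ at the approximate level; since $q(M)/p(M)\to\infty$ as $M\to1$, the coefficients and the flux term in your linearized problems are then not bounded, so the Lax--Milgram/Leray--Schauder step is not ``exactly as in \cite{Ju15}'', and truncating only the reaction does not cure the diffusion part. The paper regularizes instead with $\tau\sum_i(w_i,\phi_i)_{H^m(\Omega)}$, $m>d/2$, so that any fixed point $w^j$ lies in $H^m\hookrightarrow L^\infty$, hence $u^j$ takes values in a compact subset of $\cD$; this makes both the singular diffusion coefficients and the singular reactions well defined with no truncation, uses $\tau$ itself as the regularization parameter (no separate $\eps\to0$ limit), and the regularizing term disappears via \eqref{bound.wtau}.

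Second, compactness and limit identification. A standard Aubin--Lions--Simon argument on $u^{(\tau)}$ is not available: the dissipation only controls $\nabla(\Mtau)^{\frac{a+1}{2}}$ and $(\Mtau)^{\frac{a-1}{2}}\nabla\utau_i$, not $\nabla\utau_i$; and the nonstandard Poincar\'e inequality of Subsection~\ref{appendix: aux.res} is not the tool here (it is used only to obtain \eqref{bound.1minusM} in the pure Neumann case of Remark~\ref{remark.neu.bc}; in the Dirichlet case the standard Poincar\'e inequality already yields \eqref{bound.sing}). The paper gets strong convergence by the nonlinear Aubin--Lions lemma of \cite{CJL14} applied to $\Mtau$ (with $Q(s)=s^{\frac{a+3}{2}}$) and by \cite[Lemma 7]{ZaJu17} applied to $(\Mtau)^{\frac{a+1}{2}}\utau_i$, upgrading to a.e.\ convergence of $\utau_i$ by treating $\{M>0\}$ and $\{M=0\}$ separately. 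Likewise, identifying the weak $L^1$ limit of $p^2(\Mtau)\na(\utau_i q(\Mtau)/p(\Mtau))$ is not automatic from a.e.\ convergence plus gradient bounds: one needs the decomposition \eqref{idlim.1} and the boundedness of the function $g$, which is where hypothesis \eqref{Def.p} enters a second time (besides Lemma~\ref{bound.nabla}). You correctly flagged the reaction estimate and the degeneracy as critical, but these specific devices are what actually close the argument.
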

The proof of Theorem~\ref{Thm.Existence} is based upon the semi-discretization in time of \eqref{cross.diff.sys.00}. 
The resulting elliptic problem reads as
$$ \frac{u_i^j-u_i^{j-1}}{\tau} + \Div \!\!\left(p(M^j)^2\na\left(\frac{u_i^j q(M^j)}{p(M^j)}\right)\right) = r_i(u^j),\quad i=1,\ldots,n,~~
x\in\Omega . $$
A higher order regularizing term is also added, which is needed in order to prove the well-posedness of the time-discretized equations.
The key tool in the analysis is a discrete entropy inequality:
\begin{align*}
  &\frac{1}{\tau}\int_{\Omega}(h^*(u^j|u_D)-h^*(u^{j-1}|u_D))dx 
  + 2\sum_{i=1}^n\alpha_i\int_\Omega p(M^j)^2\left|\nabla\sqrt{\frac{u_i^j q(M^j)}{p(M^j)}}\right|^2 dx\\
   &\qquad\leq C_1\int_{\Omega}h^*(u^j|u_D)\,dx + C_2,\nonumber
 \end{align*}
which yields crucial gradient estimates for the solution $u^j$ to the time-discretized problem.
The entropy dissipation satisfies the bound (see Lemma \ref{bound.nabla})
\begin{align*}
  &\int_0^T \int_{\Omega} p^2(M)\left|\na \sqrt{\frac{u_iq(M)}{p(M)}} \right|^2\,dxdt\\
  &\qquad \geq C \int_0^T \int_{\Omega} \frac{M^{a-1}|\na M|^2}{(1-M)^{1+b+\kappa}}\,dxdt + \sum_{i=1}^n \int_0^T \int_{\Omega} p(M)q(M)|\na \sqrt{u_i}|^2\,dxdt, 
\end{align*}
which leads to \eqref{bound.sing.int.0}. 
This estimate of the singularity is crucial to control the nonlinear terms, 
and implies that no saturation occurs in the biofilm.

The second result we prove concerns the long-time behavior of the solutions  to \eqref{cross.diff.sys.00}--\eqref{cond.b.0}.
\begin{theorem}[Convergence to steady state] \label{Thm.LTB}
Let all the assumptions from Theorem~\ref{Thm.Existence} be fulfilled. In addition, assume that $\lambda_r=C_r=0$ in \eqref{hp.r2} and \eqref{hp.r3} and $b\geq 2$.
Then there exists a constant $C>0$ such that for any $t>0$ it holds
\[  \sum_{i=1}^n \|u_i(t) - u_{D,i} \|^2_{L^2(\Omega)} \leq \frac{C}{1+t}. \]
\end{theorem}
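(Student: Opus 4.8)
The plan is to derive an entropy--entropy dissipation inequality of the form $\frac{dH}{dt}[u(t)] \leq -c\,H[u(t)]^{1+\gamma}$ for a suitable $\gamma>0$ (which gives algebraic decay of $H$), and then use the convexity of $h^*$ together with a Csisz\'ar--Kullback--Pinsker-type bound to convert decay of the relative entropy into decay of $\sum_i \|u_i(t)-u_{D,i}\|_{L^2(\Omega)}^2$. Since $\lambda_r=C_r=0$, Theorem~\ref{Thm.Existence} gives the clean dissipation inequality
\begin{align*}
\frac{d}{dt}H[u(t)] + 2\sum_{i=1}^n \alpha_i \int_\Omega p(M)^2\left|\nabla\sqrt{\tfrac{u_i q(M)}{p(M)}}\right|^2 dx \leq 0,
\end{align*}
and by Lemma~\ref{bound.nabla} the dissipation term controls $\int_\Omega \frac{M^{a-1}|\nabla M|^2}{(1-M)^{1+b+\kappa}}dx + \sum_i\int_\Omega p(M)q(M)|\nabla\sqrt{u_i}|^2 dx$ from below.

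First I would bound $H[u(t)]$ from above by the dissipation. Write $z_i = u_i - u_{D,i}$; since $M_D<1$, the key is to show the dissipation integrand dominates $\sum_i\|\nabla z_i\|_{L^2}^2$ and $\|\nabla(M-M_D)\|_{L^2}^2$ up to constants, using the elementary identity $|\nabla\sqrt{u_i}|^2 = |\nabla u_i|^2/(4u_i)$ together with the fact that the dissipation estimate from Lemma~\ref{bound.nabla} already yields $L^1(Q_T)$ control of $(1-M)^{1-b-\kappa}$ and hence, via the $b\geq 2$ hypothesis, an integrable lower bound on the weight $p(M)q(M)$ away from a set where $M$ is close to $1$; the singular weight $M^{a-1}(1-M)^{-1-b-\kappa}$ in the $M$-gradient term is bounded below by a positive constant on any region where $M$ stays in a compact subinterval of $(0,1)$. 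Because $u$ vanishes on $\Gamma_D$ relative to $u_D$ (i.e.\ $z=0$ on $\Gamma_D$ with $|\Gamma_D|>0$), the Poincar\'e inequality proved in Subsection~\ref{appendix: aux.res} applies: $\|z_i\|_{L^2(\Omega)}^2 \leq C_P\|\nabla z_i\|_{L^2(\Omega)}^2$. Combined with Taylor expansion of $h^*$ around $u_D$ (second-order remainder controlled by $h''$, which is bounded below on compact subsets of $\cD$), this gives $H[u(t)] \leq C\big(\text{dissipation} + \text{lower-order correction}\big)$.

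Then I would close the differential inequality. The degeneracy at $M=0$ and the singularity at $M=1$ prevent a pure linear inequality $\frac{dH}{dt}\leq -cH$, so I expect only $\frac{dH}{dt}[u(t)] \leq -c\,H[u(t)]^{1+\gamma}$ with $\gamma>0$ arising from interpolating the ``good'' dissipation norm against the weaker $L^1$ bound $\|(1-M)^{1-b-\kappa}\|_{L^1(Q_T)}\leq C$ (this is exactly where $b\geq 2$ is used, to ensure the relevant exponents are admissible). Integrating $y' \leq -c\,y^{1+\gamma}$ gives $H[u(t)] \leq C(1+t)^{-1/\gamma}$; if the interpolation can be arranged so that $\gamma=1$ one gets precisely the $1/(1+t)$ rate claimed, which I would aim for by choosing the interpolation exponents tightly. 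Finally, the lower Csisz\'ar--Kullback--Pinsker bound $\sum_i\|u_i(t)-u_{D,i}\|_{L^2(\Omega)}^2 \leq C\,H[u(t)]$ (again from convexity of $h$ and a uniform lower bound on $h''$, valid since $u$ and $u_D$ both take values in a fixed compact subset of $\cD$ — here one must first know $\|(1-M(t))^{-1}\|$ is controlled, which follows from the uniform-in-time entropy bound $H[u(t)]\leq H[u^0]$) transfers the decay to the stated quantity.

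The main obstacle will be step two: controlling the relative entropy $H[u(t)]$ by the \emph{degenerate and singular} dissipation functional uniformly in time. Near $\{M=0\}$ the weight $M^{a-1}$ degenerates, so $\|\nabla(M-M_D)\|_{L^2}$ is not directly controlled there; one must exploit that $H[u(t)]$ itself is small near the steady state (where $M\approx M_D>0$) to argue that the bad region $\{M \text{ small}\}$ contributes negligibly, a bootstrap-type argument. Symmetrically, near $\{M=1\}$ the singular a priori bound \eqref{bound.sing.int.0} must be quantitatively balanced against the good gradient term — this is the technical heart, and it is precisely why the extra hypothesis $b\geq 2$ (beyond $b>1$) is imposed. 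Handling these two degeneracy/singularity regions simultaneously while keeping all constants independent of $t$ is where the real work lies; everything else is a fairly standard entropy-dissipation-to-decay argument.
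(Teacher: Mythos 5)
Your high-level skeleton --- derive $\frac{d}{dt}H\le -cH^{2}$ from the entropy inequality with $\lambda_r=C_r=0$, integrate the resulting ODE to get $H[u(t)]\le C/(1+t)$, and convert to the $L^2$ distance via uniform convexity of the Boltzmann part of $h^*$ --- is indeed the paper's strategy. However, the step you yourself call ``the technical heart'' is precisely the step you do not supply, and the mechanisms you sketch for it would not work as stated. Bounding $\sum_i\|\nabla(u_i-u_{D,i})\|_{L^2(\Omega)}^2$ or $\|\nabla(M-M_D)\|_{L^2(\Omega)}^2$ directly by the dissipation fails near $\{M=0\}$ because of the weights $M^a$ and $M^{a-1}$, and smallness of the integral quantity $H[u(t)]$ gives no pointwise lower bound on $M$, so the proposed ``bootstrap'' has no evident implementation. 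Moreover, interpolating against the space-time bound \eqref{bound.sing.int.0} cannot yield a differential inequality valid at each time $t$: to close $\frac{d}{dt}H\le -cH^{2}$ one needs quantities controlled on every time slice, whereas $(1-M)^{1-b-\kappa}\in L^1(Q_T)$ is only a space-time bound; correspondingly, your account of where $b\ge 2$ enters is off.

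The paper resolves these points concretely, and without any bootstrap. For the species part it uses $p(M)q(M)\ge C M^a(1-M)^{1+\kappa-b}$, absorbs the degeneracy by rewriting $u_i^{a-1}|\nabla u_i|^2$ as $c\,|\nabla u_i^{(a+1)/2}|^2$, applies a weighted Cauchy--Schwarz inequality $\int_\Omega|\nabla u_i^{(a+1)/2}|\,dx\le\big(\int_\Omega(1-M)^{b-1-\kappa}dx\big)^{1/2}\big(\int_\Omega(1-M)^{1+\kappa-b}|\nabla u_i^{(a+1)/2}|^2dx\big)^{1/2}$, and controls the first factor at \emph{each} time by the $L^\infty(0,T;L^1(\Omega))$ bound on $(1-M)^{1-\kappa}$ from \eqref{bound.1minusM.2}; this is exactly where $b\ge2$ is used, via the pointwise inequality $(1-M)^{b-1-\kappa}\le(1-M)^{1-\kappa}$. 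Poincar\'e and the elementary two-case estimate $|u_i^{(a+1)/2}-u_{D,i}^{(a+1)/2}|\ge C|u_i-u_{D,i}|\ge C h_1^*(u_i|u_{D,i})$ (which works because $u_{D,i}>0$ is a fixed constant) then give the square of the $h_1^*$ part below the dissipation. For the $M$-part the paper substitutes $\Phi(M)=\int_{M_D}^M s^{(a-1)/2}(1-s)^{-(1+b+\kappa)/2}ds$, applies Poincar\'e to $\Phi(M)$, proves $h_2^*(M|M_D)\le C\Phi(M)^2$ by checking the ratio near $M=M_D$ and $M\to1$, and obtains the square using $\int_\Omega h_2^*\,dx\le H_0$. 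Without these (or equivalent) ideas your differential inequality does not close, so the proposal has a genuine gap at its central step; as a minor additional remark, the final convexity step needs no control of $(1-M)^{-1}$, since $h^*\ge\sum_i h_1^*$ and the Hessian of $h_1^*$ is already uniformly positive definite on $(0,1]$.
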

This means that the solutions to \eqref{cross.diff.sys.1}--\eqref{cond.b.0} converge to the constant steady state $u_D$ as $t\to\infty$.
The main idea of the large-time asymptotic analysis of $u_i(t) : = u_i(\cdot,t)$
is to exploit the entropy inequality \eqref{entr.inequ.2} in the case when $C_1=C_2=0$.
We show that the entropy dissipation dominates the square of the entropy functional, i.e.
\begin{align*}
\sum_{i=1}^n\alpha_i\int_\Omega p(M)^2\left|\nabla\sqrt{\frac{u_i q(M)}{p(M)}}\right|^2 dx\geq C\left( \int_{\Omega}h^*(u|u_D)\,dx \right)^2 ,
\end{align*} 
 from where we deduce that the convergence is of order $1/t$, as $t \to \infty$. 
 Finally, strict convexity of the relative entropy density (see Lemma~\ref{pos.def}), gives the convergence in $L^2$-norm.
We note that we were not able to prove an exponential decay rate due to 
the lack of suitable convex Sobolev inequalities for \eqref{cross.diff.sys.00}.
However, we point out that our numerical simulations suggest that exponential decay should hold, see Subsection \ref{Sec.Num}.

 The third result we present is about the uniqueness of the solution to \eqref{cross.diff.sys.00}--\eqref{cond.b.0}.
 Uniqueness of solutions is achieved provided that additional assumptions on the reaction term are made. Precisely, we assume that
 functions $r^{(0)}_1,\ldots,r^{(0)}_n, r^{(1)}, R : [0,1)\to\R$ exist such that
 \begin{align}
   & r_i(u) = r^{(0)}_{i}(M) + r^{(1)}(M) u_i,\quad i=1,\ldots,n,\quad u\in\mathcal D,   \label{assum.react.0}\\
   & \exists\eps_0>0 : ~ r^{(0)}_i(M) \geq \max\{0,\eps_0 r^{(1)}(M)\},\quad i=1,\ldots,n,\quad M \in [0,1), \label{assum.react.0b}\\
   & \exists C_R\in\R : ~ \sum_{j=1}^n r_j(u) = R(M) + C_R M,\quad u\in\mathcal D,\label{assum.react.1}\\
   & \exists C_R'>0: ~ \frac{|R(M)|}{M} + |R'(M)| \leq C_R' M^{a/2}, \quad M \in (0,1).  \label{assum.react.2}
  \end{align} 
An example of reaction term satisfying both sets of assumptions \eqref{hp.r1}--\eqref{hp.r4}, \eqref{assum.react.0}--\eqref{assum.react.2} is
\begin{equation}
 r_i(u) = u_{D,i} - u_i, \qquad i=1,\ldots,n,\quad u\in\mathcal D. \label{example.r}
\end{equation}
Furthermore, we assume that the parameters $\alpha_1,\ldots,\alpha_n$ are all the same (and therefore without loss of generality we set $\alpha_i=1$, $i=1,\ldots,n$).

\begin{theorem}[Uniqueness of solutions] \label{Thm.Uniqueness} 
 Let the assumptions of Theorem~\ref{Thm.Existence} hold. Furthermore, we assume that $\alpha_i=1$ for all $i=1,\ldots,n$ and that the reaction terms satisfy the assumptions given by \eqref{assum.react.0}--\eqref{assum.react.2}. 
 Then there exists a unique weak solution to \eqref{cross.diff.sys.1}--\eqref{cond.b.0}.
\end{theorem}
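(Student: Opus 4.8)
The guiding idea is that the structural hypotheses \eqref{assum.react.0}--\eqref{assum.react.2} together with $\alpha_i\equiv 1$ decouple the problem into two layers: first prove uniqueness for a \emph{scalar} degenerate–singular parabolic equation satisfied by the total biomass $M=\sum_i u_i$, and then, with $M$ in hand, prove uniqueness for the individual densities $u_i$, which by \eqref{assum.react.0} obey decoupled linear equations with $M$-dependent coefficients.

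\emph{Step 1: the equation for $M$.} Summing \eqref{cross.diff.sys.1} over $i$ and using $\alpha_i=1$, \eqref{Eq.1.0} and \eqref{assum.react.1}, every weak solution satisfies, weakly and with $M=M_D$ on $\Gamma_D$, $\nu\cdot\nabla M=0$ on $\Gamma_N$,
\[
\partial_t M-\Delta\Phi(M)=R(M)+C_R M,\qquad \Phi(M):=\int_0^M\frac{s^a}{(1-s)^b}\,\dd s.
\]
Given two solutions $u,\bar u$ with equal data, put $m=M-\bar M$ and test the difference equation with the solution $\psi$ of the dual mixed problem $-\Delta\psi=m$, $\psi=0$ on $\Gamma_D$, $\nu\cdot\nabla\psi=0$ on $\Gamma_N$ (the associated dual norm $\|\cdot\|_{*}$ being well defined thanks to the non-standard Poincar\'e inequality of Subsection~\ref{appendix: aux.res}), obtaining
\[
\tfrac12\tfrac{d}{dt}\|m\|_{*}^2+\int_\Omega\big(\Phi(M)-\Phi(\bar M)\big)m\,\dd x=\int_\Omega\big(R(M)-R(\bar M)+C_R m\big)\psi\,\dd x.
\]
All integrals are finite: $\Phi(M),\Phi(\bar M)\in L^1(Q_T)$ since $\Phi(M)\le C(1-M)^{1-b}\le C(1-M)^{1-b-\kappa}\in L^1(Q_T)$ by Theorem~\ref{Thm.Existence}. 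The second left-hand term is nonnegative by monotonicity of $\Phi$ and behaves like $M^a m^2$ on $\{M\approx 0\}$; since \eqref{assum.react.2} yields $|R(M)-R(\bar M)|\le C\max(M,\bar M)^{a/2}|m|$, the $R$-contribution splits by Young's inequality into a piece absorbed by this coercive term near the degeneracy and a piece bounded by $C\|m\|_{*}^2$, and the $C_R m$ term is handled the same way. Gronwall's lemma and $m(0)=0$ give $M=\bar M$ a.e.\ in $Q_T$.

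\emph{Step 2: the equations for $u_i$.} With $M=\bar M$ fixed, \eqref{assum.react.0} makes the $i$-th equation linear in $u_i$ with coefficients depending only on $M$, and the equations for different $i$ are decoupled; moreover $r_i^{(0)}(M)$ cancels in the difference, so $e_i:=u_i-\bar u_i$ solves
\[
\partial_t e_i-\Div\!\Big(p(M)^2\nabla\big(e_i\,q(M)/p(M)\big)\Big)=r^{(1)}(M)\,e_i,\qquad e_i(0)=0,\quad e_i=0\ \text{on }\Gamma_D.
\]
The plan is to derive an energy (or Gajewski-type entropy) estimate for $e_i$: testing with an admissible multiplier such as $e_i$ and expanding $\nabla(e_i q(M)/p(M))=(q(M)/p(M))\nabla e_i+e_i(q/p)'(M)\nabla M$ produces the good dissipation $\int p(M)q(M)|\nabla e_i|^2$ (inherited from the entropy dissipation), a zeroth-order term controlled via \eqref{assum.react.0b} and \eqref{hp.r2}--\eqref{hp.r4}, and a cross term coupling $\nabla e_i$ with $\nabla M$. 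The latter is split by Young's inequality, absorbing part into $\int p(M)q(M)|\nabla e_i|^2$ and estimating the remainder by $\int(\text{weight})\,e_i^2|\nabla M|^2$, the weight being comparable to $M^{a-2}$ near $\{M=0\}$ and to $(1-M)^{-1-b-\kappa}$ near $\{M=1\}$; the a priori bound $\int_{Q_T}M^{a-1}|\nabla M|^2(1-M)^{-1-b-\kappa}<\infty$, combined with $0\le u_i,\bar u_i\le M$ (so $e_i^2\le M^2$ near $\{M=0\}$ and $e_i^2\le 4$ everywhere), renders this remainder finite and of the form needed to close by Gronwall; with $e_i(0)=0$ this forces $u_i=\bar u_i$ a.e., which together with Step~1 proves the theorem.

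\emph{Main obstacle.} The delicate point is the Step~2 estimate: unlike Step~1 it cannot be reduced to a monotone scalar problem, and one must absorb the $\nabla M$-coupling against the \emph{degenerate and singular} dissipation $p(M)q(M)|\nabla e_i|^2$ using a priori bounds that are essentially sharp. Making the weights match exactly — so that the $\nabla M$-contributions are precisely covered by the entropy-dissipation estimate together with the elementary bound $e_i^2\le M^2$ near the degeneracy, possibly after replacing the plain $L^2$ norm by a weighted functional $\int\theta(M)e_i^2\,\dd x$ tailored to cancel the worst terms — is what really needs work. This is also the reason one imposes $\alpha_i\equiv1$ (otherwise summation gives no closed scalar equation for $M$) and restricts the reaction to the special form \eqref{assum.react.0}--\eqref{assum.react.2}.
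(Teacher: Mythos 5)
Your Step 1 is essentially the paper's first part: summing the equations (using $\alpha_i=1$ and \eqref{assum.react.1}) to get a scalar porous-medium-type equation for $M$, testing with the solution of the dual problem $-\Delta\varphi=M_1-M_2$ with mixed boundary conditions, using the monotonicity/coercivity of the nonlinearity (the paper's estimate $(Q(M_1)-Q(M_2))(M_1-M_2)\geq C\,(\max\{M_1,M_2\})^a(M_1-M_2)^2$), absorbing the reaction via \eqref{assum.react.2} and Young, and closing with Gronwall. That part is fine.

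Step 2, however, contains a genuine gap, which you yourself flag but do not resolve, and it is exactly the point where the paper switches to a different tool. After testing the equation for $e_i=u_i-\bar u_i$ with $e_i$ and splitting the cross term by Young, you are left with a remainder of the form $\int_\Omega e_i^2\,w(M)|\nabla M|^2\,dx$ with $w(M)\sim M^{a-2}$ near $M=0$ and $\sim(1-M)^{-1-b-\kappa}$ near $M=1$. The only available control on $w(M)|\nabla M|^2$ (via $|e_i|\leq M$ near the degeneracy) is the entropy-dissipation bound, which is a \emph{space-time} $L^1(Q_T)$ bound; it does not give $w(M)|\nabla M|^2\in L^1(0,T;L^\infty(\Omega))$. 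Hence the remainder cannot be estimated by $g(t)\|e_i(t)\|_{L^2(\Omega)}^2$ with $g\in L^1(0,T)$, and the Gronwall argument does not close: finiteness of the remainder is not enough, since it does not vanish proportionally to the quantity you are propagating from $e_i(0)=0$. The weighted functional $\int\theta(M)e_i^2\,dx$ you suggest introduces $\partial_t M$ through $\theta'(M)$, which is only controlled in a weak dual space, so this does not repair the argument either; in addition, $e_i$ is not obviously an admissible test function, since the dissipation $\int p(M)q(M)|\nabla e_i|^2$ degenerates at $M=0$ and the solution class only controls $M^{(a+1)/2}\nabla u_i$ in $L^2$. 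The paper avoids all of this by using Gajewski's $E$-monotonicity method for the second part: one works with the regularized logarithmic distance $\db_\eps(u,v)$ built from $\xi_\eps(s)=(s+\eps)\log(s+\eps)$, the dangerous quadratic gradient terms combine into a single nonpositive term thanks to the subadditivity of the Fisher information, the reaction contribution is controlled by $C\int_0^t\db_\eps\,ds$ using \eqref{assum.react.0}--\eqref{assum.react.0b} and $M_1=M_2$, and the remaining coefficient terms carry factors $\frac{u_i}{u_i+\eps}-\frac{u_i+v_i}{u_i+v_i+2\eps}$ which tend to zero pointwise and are dominated by $p^2|\nabla u_i|\,|\nabla(q/p)|\in L^1(Q_T)$ (proved from the entropy-dissipation bounds), so they disappear in the limit $\eps\to0$ by dominated convergence rather than having to be absorbed by Gronwall; the conclusion $u=v$ then follows from $\xi_\eps(u_i)+\xi_\eps(v_i)-2\xi_\eps\bigl(\frac{u_i+v_i}{2}\bigr)\geq\frac18(u_i-v_i)^2$. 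To complete your proof you would need to either carry out this Gajewski-type argument or supply a genuinely new way to absorb the $\nabla M$-coupling, which your proposal does not provide.
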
 
 We point out that the proof of uniqueness of weak solutions for strongly coupled cross-diffusion systems is delicate. 
 Similarly like in \cite{ChJu18, ZaJu17}, our uniqueness proof is based on a combination of the $H^{-1}$ method and the technique of Gajewski \cite{Gaj94}.

\section{Auxiliary results}\label{Sec.Aux}

 In this section we state technical results which are used for proving the main results of this paper: asymptotic behavior of functions $p(M)$ and $q(M)$ when $M \to 0$ and $M \to 1$, the convexity of the entropy density $h$, the invertibility of the gradient of the relative entropy density $h^*$ with respect to the variable $u$ and finally the lower bound for the entropy dissipation.

\begin{lemma}[Asymptotic behavior of $p$, $q$]
 Let $a,b > 1$, $\kappa > 0$ and $0 < M < 1$. For functions $p$ and $q$ defined by \eqref{Def.q} and \eqref{Def.p}, there exist positive constants $C_1$, $C_2$ and $C_3$ such that
 \begin{align}
 \lim_{M\to 1} \frac{p(M)q(M)}{(1-M)^{1+\kappa-b}} &= C_1 ,  \label{est.mto1}\smallskip\\
 \lim_{M \to 1} \frac{\log\big(q(M)/p(M)\big)}{ (1-M)^{-\kappa}} &= C_2, \label{esti.4}\smallskip\\
\lim_{M\to 0}M^{-a}q(M) &=\frac{C_3}{p(0)}. \label{asy.q.Mto0}
\end{align}
\end{lemma}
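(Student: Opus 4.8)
The plan is to treat the three limits separately, with the analysis near $M\to 1$ relying entirely on the quantitative hypothesis \eqref{Def.p} and the resulting bound \eqref{est.p.1}, and the analysis near $M\to 0$ relying on the elementary behavior of the integrand $s^a/(1-s)^b$ there. First I would record the consequence of the third condition in \eqref{Def.p}: since $-(1-M)^{1+\kappa}p'(M)/p(M)\to c$, we have $(\log p)'(M) \sim -c(1-M)^{-1-\kappa}$ as $M\to 1$, hence by integration $\log p(M) \sim -\frac{c}{\kappa}(1-M)^{-\kappa}$, i.e. $p(M) = \exp\big(-\frac{c}{\kappa}(1-M)^{-\kappa}(1+o(1))\big)$. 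The key point is that $p(M)^{-2}$ grows like $\exp\big(\frac{2c}{\kappa}(1-M)^{-\kappa}\big)$, which dominates any algebraic factor; this is what makes the integral defining $q$ in \eqref{Def.q} concentrate near its upper endpoint.

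For \eqref{esti.4}, write $q(M)/p(M) = M^{-1}\int_0^M s^a(1-s)^{-b} p(s)^{-2}\,ds$ and apply a Laplace/Watson-type asymptotic. The integrand is $\exp\big(\phi(s)\big)$ with $\phi(s) = \frac{2c}{\kappa}(1-s)^{-\kappa}(1+o(1)) + (\text{algebraic})$, so $\log\int_0^M(\cdots)ds \sim \frac{2c}{\kappa}(1-M)^{-\kappa}$. Then $\log(q(M)/p(M)) = -\log p(M) + \log\int_0^M(\cdots)ds + O(\log\text{-order terms})$, and since $-\log p(M)\sim \frac{c}{\kappa}(1-M)^{-\kappa}$ while the integral contributes $\frac{2c}{\kappa}(1-M)^{-\kappa}$, after subtracting... wait, one must be careful: $q/p$ contains $p^{-1}$ from the prefactor together with $p^{-2}$ inside the integral, so the genuine exponential growth of $q/p$ is $\exp\big(\tfrac{c}{\kappa}(1-M)^{-\kappa}\big)$, giving $C_2=c/\kappa$ after taking logarithms and dividing by $(1-M)^{-\kappa}$. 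The claim is then that the lower-order (algebraic and $o(1)$) corrections vanish after dividing by $(1-M)^{-\kappa}$, which follows from a careful two-sided estimate of the integral: bound it above by extending to $[0,1)$ is not allowed (divergent), so instead split $[0,M]$ into $[0,1-\delta]$ and $[1-\delta,M]$ and show the first part is negligible, while on the second part $p(s)^{-2} = \exp\big(\tfrac{2c}{\kappa}(1-s)^{-\kappa}(1+o_\delta(1))\big)$ and the integral is governed by the value at $s=M$ up to a factor that is subexponential in $(1-M)^{-\kappa}$. For \eqref{est.mto1}, combine \eqref{esti.4} with the definition: $p(M)q(M) = p(M)^2 \cdot \frac{q(M)}{p(M)} = M^{-1}p(M)^2\int_0^M s^a(1-s)^{-b}p(s)^{-2}ds$; again localize near $s=M$ and use that on $[1-\delta,M]$, $p(M)^2 p(s)^{-2}$ is close to $1$ only in a shrinking neighborhood, so the integral behaves like $\int^M (1-s)^{-b}\,ds \cdot (\text{correction})$. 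A clean route is to differentiate: by the fundamental theorem of calculus and \eqref{Def.q}, $\big(M q(M)/p(M)\big)' = M^a(1-M)^{-b}p(M)^{-2}$, so $\frac{d}{dM}\big(\log(Mq/p)\big) = \frac{M^a(1-M)^{-b}p(M)^{-2}}{Mq(M)/p(M)}$; using the ansatz $Mq/p \sim K(M)\exp\big(\tfrac{c}{\kappa}(1-M)^{-\kappa}\big)$ and matching the logarithmic derivatives $\tfrac{c}{\kappa}\cdot\kappa(1-M)^{-1-\kappa} \sim \tfrac{M^a(1-M)^{-b}p(M)^{-2}}{K(M)e^{(c/\kappa)(1-M)^{-\kappa}}}$ forces $K(M)\sim C\,M^{a-1}(1-M)^{1+\kappa-b}$, which simultaneously pins down the constant in \eqref{esti.4} and yields \eqref{est.mto1} after multiplying by $p/M$ and using $p^2 e^{(2c/\kappa)(1-M)^{-\kappa}}\to\text{const}$ — this is the cleanest way and avoids repeated Laplace estimates.

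For \eqref{asy.q.Mto0}, this is elementary: near $s=0$ the integrand $s^a(1-s)^{-b}p(s)^{-2}$ is $\sim s^a/p(0)^2$ since $p$ is continuous, decreasing and $p(0)>0$ (finite, as $p(1)=0$ and $p$ decreasing force $p(0)>p(1)=0$; finiteness is part of $p$ being a genuine function on $[0,1]$). Hence $\int_0^M s^a(1-s)^{-b}p(s)^{-2}ds \sim \frac{M^{a+1}}{(a+1)p(0)^2}$ as $M\to0$, and therefore $q(M) = \frac{p(M)}{M}\int_0^M(\cdots) \sim \frac{p(0)}{M}\cdot\frac{M^{a+1}}{(a+1)p(0)^2} = \frac{M^a}{(a+1)p(0)}$, giving $C_3 = 1/(a+1)$.

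The main obstacle is making the Laplace-type asymptotics near $M\to1$ rigorous with only the one-sided, $o(1)$-type information supplied by \eqref{Def.p}: one never has an exact formula for $p$, only that $(1-M)^{1+\kappa}(\log p)'(M)\to -c$, so all estimates must be phrased as "for every $\eps>0$ there is $\delta>0$ such that on $[1-\delta,1)$..." and the prefactors $K(M)$ extracted by the differentiation trick must be shown to have $\log K(M) = o\big((1-M)^{-\kappa}\big)$ to conclude \eqref{esti.4}, and the sharper $K(M)\sim C\,M^{a-1}(1-M)^{1+\kappa-b}$ for \eqref{est.mto1}. I expect the differentiation/ODE-matching argument to handle this most efficiently, reducing everything to the single asymptotic integration $\int_{M_0}^M (1-s)^{-1-\kappa}ds \sim \tfrac1\kappa(1-M)^{-\kappa}$ together with the continuity of the algebraic factors.
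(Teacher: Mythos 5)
Your treatment of \eqref{asy.q.Mto0} is correct and essentially the paper's (the paper substitutes $s=M\sigma$ and passes to the limit; your local expansion of the integrand is equivalent), with $C_3=1/(a+1)$. The problems are in the $M\to 1$ limits. First, a bookkeeping error: by \eqref{Def.q}, $q(M)/p(M)=M^{-1}\int_0^M s^a(1-s)^{-b}p(s)^{-2}\,ds$ — the prefactor $p(M)$ cancels, there is no ``$p^{-1}$ from the prefactor'' — so the correct growth is $\exp\big(\tfrac{2c}{\kappa}(1-M)^{-\kappa}(1+o(1))\big)$ and $C_2=2c/\kappa$, not $c/\kappa$ (harmless for the statement, which only needs $C_2>0$, but it signals the miscomputation). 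Second, and this is the genuine gap: your ``clean route'' for \eqref{est.mto1} hinges on the claim that $p(M)^2e^{(2c/\kappa)(1-M)^{-\kappa}}$ tends to a constant. That does not follow from \eqref{Def.p}: the hypothesis only yields $\log p(M)=-\tfrac{c}{\kappa}(1-M)^{-\kappa}(1+o(1))$, so $p(M)^2e^{(2c/\kappa)(1-M)^{-\kappa}}=e^{o((1-M)^{-\kappa})}$ may tend to $0$ or $\infty$ (take, near $M=1$, $p(M)=\exp\big(-\tfrac{c}{\kappa}(1-M)^{-\kappa}+(1-M)^{-\kappa/2}\big)$, which satisfies \eqref{Def.p}). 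These uncontrolled subexponential factors are enormous compared with the algebraic prefactor $(1-M)^{1+\kappa-b}$ you are trying to extract, so no comparison of $p$ with an explicit exponential can identify the limit in \eqref{est.mto1}; also, ``matching logarithmic derivatives'' of an asymptotic ansatz is not a valid inference ($f\sim g$ cannot be differentiated). The alternative Laplace sketch is likewise off: comparing with $\int^M(1-s)^{-b}ds\sim (1-M)^{1-b}/(b-1)$ gives the wrong power — the localization scale $(1-M)^{1+\kappa}$ set by $p^{-2}$ is exactly what produces the extra $\kappa$ and the constant.

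The repair is the paper's one-line argument, and it is shorter than your route: compare two quantities that \emph{both} contain $p^{-2}$, so the unknown $e^{o((1-M)^{-\kappa})}$ factors cancel. Both $\int_0^M s^a(1-s)^{-b}p(s)^{-2}ds$ and $(1-M)^{1+\kappa-b}p(M)^{-2}$ tend to $+\infty$ (by \eqref{est.p.1}), and by l'H\^opital the ratio of their derivatives is
\begin{equation*}
\frac{M^a(1-M)^{-b}p(M)^{-2}}{(1-M)^{-b}p(M)^{-2}\left[-(1+\kappa-b)(1-M)^{\kappa}-2(1-M)^{1+\kappa}\frac{p'(M)}{p(M)}\right]}\longrightarrow \frac{1}{2c},
\end{equation*}
where \eqref{Def.p} is used precisely where the uncontrolled term $p'/p$ appears. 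This gives \eqref{est.mto1} with $C_1=\tfrac{1}{2c}$, and then \eqref{esti.4} follows by taking logarithms (or a second application of l'H\^opital), with $C_2=2c/\kappa$. Your two-sided Laplace estimate is adequate for the purely logarithmic statement \eqref{esti.4}, but for \eqref{est.mto1} you need the exact ratio, and only a cancellation argument of this type (or an equivalent integration-by-parts/Gronwall comparison against $(1-M)^{1+\kappa-b}p(M)^{-2}$) delivers it under the hypothesis \eqref{Def.p}.
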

\begin{proof}
 The proof of limits given by formulas \eqref{est.mto1} and 
 \eqref{esti.4} directly follows using l'H\^{o}pital's rule.
 In order to show \eqref{asy.q.Mto0} we perform the change of variable $s = M\sigma$ in the integral appearing in the definition of $q$ \eqref{Def.q}. 
\end{proof}
\begin{lemma}[Convexity of $h$]\label{pos.def}
 It holds that the matrix $\di \Hc(u)\equiv\left(\frac{\pa w_i}{\pa u_j}(u)\right)_{i,j=1}^ n$ is positive definite and symmetric on $\mathcal{D}$,
 where $w_i$ is defined in \eqref{entr.var.2} and $\cD$ is given by \eqref{Def.D}.
\end{lemma}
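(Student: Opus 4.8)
The plan is to compute the Hessian $\mathcal{H}(u)$ explicitly and read off both properties from its structure. Since $u_D$ is a fixed constant vector, the subtracted term in \eqref{entr.var.2} is independent of $u$, so up to a constant $w_i = (\partial h/\partial u_i)(u) = \log u_i + g(M)$, where I set $g(M):=\log(q(M)/p(M))$ and $M=\sum_k u_k$. Differentiating and using $\partial M/\partial u_j = 1$ gives
\begin{equation*}
\mathcal{H}_{ij}(u) = \frac{\partial w_i}{\partial u_j}(u) = \frac{\delta_{ij}}{u_i} + g'(M),
\end{equation*}
i.e. $\mathcal{H}(u) = \mathrm{diag}(u_1^{-1},\ldots,u_n^{-1}) + g'(M)\,\mathbf{1}\mathbf{1}^\top$ with $\mathbf{1}=(1,\ldots,1)^\top$. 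Symmetry is then immediate, and for $\xi\in\mathbb{R}^n$,
\begin{equation*}
\xi^\top \mathcal{H}(u)\,\xi = \sum_{i=1}^n \frac{\xi_i^2}{u_i} + g'(M)\Big(\sum_{i=1}^n \xi_i\Big)^2 .
\end{equation*}
The first term is strictly positive for every $\xi\neq 0$ because $u_i>0$ on $\mathcal{D}$, so the whole matter reduces to verifying $g'(M)\geq 0$ on $(0,1)$.

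For this I would rewrite, using \eqref{Def.q},
\begin{equation*}
\frac{q(M)}{p(M)} = \frac{1}{M}\int_0^M \phi(s)\,ds,\qquad \phi(s):=\frac{s^a}{(1-s)^b\,p(s)^2},
\end{equation*}
so $q/p$ is the running average of $\phi$ on $(0,M)$. Since $a>1$ and $b>1$, the maps $s\mapsto s^a$ and $s\mapsto(1-s)^{-b}$ are increasing, and $s\mapsto p(s)^{-2}$ is increasing because $p$ is decreasing by \eqref{Def.p}; hence $\phi$ is positive and increasing on $(0,1)$. The running average of an increasing function is nondecreasing — directly, $\frac{d}{dM}\big(\tfrac1M\int_0^M\phi\big) = \tfrac{1}{M^2}\int_0^M(\phi(M)-\phi(s))\,ds \geq 0$ — so $q/p$ is nondecreasing, $g=\log(q/p)$ is nondecreasing, and therefore $g'(M)\geq 0$ on $(0,1)$. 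Plugging this back into the quadratic form gives $\xi^\top\mathcal{H}(u)\xi \geq \sum_i \xi_i^2/u_i > 0$ for all $\xi\neq 0$ and all $u\in\mathcal{D}$, which is exactly positive definiteness.

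The only step requiring genuine care is the monotonicity of $q/p$, i.e. $g'\geq 0$; the rest is a one-line differentiation. Note also that the estimate is not tight: even a weaker bound $g'(M) > -1/M$ would suffice, since by Cauchy–Schwarz $\sum_i\xi_i^2/u_i \geq (\sum_i\xi_i)^2/M$ and $M<1$ on $\mathcal{D}$. One should just make sure $\mathcal{H}(u)$ is well defined, which it is: on $\mathcal{D}$ one has $M\in(0,1)$ and $u_i>0$, so $u_i^{-1}$ and $g'(M)$ are finite.
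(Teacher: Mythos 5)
Your proof is correct, but it takes a different route from the paper's. You keep the Hessian in the form $\mathrm{diag}(u_1^{-1},\ldots,u_n^{-1})+g'(M)\mathbf{1}\mathbf{1}^\top$ and reduce everything to showing $g'(M)\ge 0$, which you obtain from the observation that, by \eqref{Def.q}, $q/p$ is the running average of the increasing function $\phi(s)=s^a(1-s)^{-b}p(s)^{-2}$ (increasing because $p$ is decreasing and positive on $[0,1)$, as \eqref{Def.p} and the definition of $q$ require). Strict definiteness then comes from the diagonal part alone, giving the stronger pointwise bound $\xi^\top\mathcal{H}(u)\xi\ge\sum_i\xi_i^2/u_i$. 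The paper instead splits $\mathcal{H}=\mathcal{A}+\mathcal{B}$ with $\mathcal{A}=\mathrm{diag}(u_i^{-1})-M^{-1}\mathbf{1}\mathbf{1}^\top$ and $\mathcal{B}$ a multiple of $\mathbf{1}\mathbf{1}^\top$ whose coefficient $\phi(M)/\int_0^M\phi(s)\,ds=g'(M)+1/M$ is trivially positive; it proves $\mathcal{A}\succeq 0$ by Cauchy--Schwarz and then extracts strictness by tracking when both quadratic forms vanish simultaneously. So the paper needs no monotonicity of $q/p$ (only positivity of $p$), at the price of a slightly more delicate strictness argument, while your argument uses the hypothesis that $p$ is decreasing but is more direct and yields, as a by-product, the monotonicity of $q/p$ — the qualitative version of the estimate $2f'/f\ge a/M$ that the paper proves separately (by the same mechanism) in Lemma~\ref{bound.nabla}. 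Your closing remark that $g'(M)>-1/M$ would already suffice, via $\sum_i\xi_i^2/u_i\ge(\sum_i\xi_i)^2/M$, is in fact exactly the paper's mechanism.
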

\begin{proof}
Direct calculation using \eqref{Def.q} gives
\begin{align}\label{H.ij}
 \Hc_{ij}=\frac{\partial w_i}{\partial u_j} = \left(\frac{\delta_{ij}}{u_i} -\frac{1}{M}\right) + \frac{M^a(1-M)^{-b} p(M)^{-2}}{\int_0^M s^a (1-s)^{-b} p(s)^{-2}\,ds}.
\end{align}
Next, let us write the matrix $\Hc$ as the sum
$ \Hc = \mathcal{A} + \mathcal{B}$, with $\mathcal{A}$, $\mathcal{B} \in \mathbb{R}^{n\times n}$
given by 
\[ \mathcal{A}\equiv \mbox{diag}(\frac{1}{u_1},\ldots,\frac{1}{u_n})-\frac{1}{M}\mathcal{C} \quad \textrm{ and } \quad 
\mathcal{B}\equiv\frac{M^a(1-M)^{-b} p(M)^{-2}}{\int_0^M s^a (1-s)^{-b} p(s)^{-2}\,ds}\mathcal{C},\]
and with $\mathcal{C} \in \mathbb{R}^{n\times n}$, $\mathcal{C}_{ij}:=1$ for all $i,j \in \{1,\ldots,n\}$.
Clearly, the matrix $\mathcal{B}$ is positive semidefinite, since for any $v \in \R^n$ one has
\begin{align*}
 v\cdot \mathcal{B}v = \frac{M^a(1-M)^{-b} p(M)^{-2}}{\int_0^M s^a (1-s)^{-b} p(s)^{-2}\,ds} \left(\sum_{i=1}^n v_i\right)^2 \geq 0.
\end{align*}
On the other side, matrix $\mathcal{A}$ is also positive semidefinite. Namely, for any $v \in \R^n$ we have
\begin{align*}
 v\cdot \mathcal{A}v = \sum_{i=1}^n \frac{v_i^2}{u_i} -\frac{1}{M}\left(\sum_{i=1}^n v_i \right)^2 \geq \sum_{i=1}^nz_i^2 -\frac{1}{\sum_{j=1}^n u_j} \left(\sum_{i=1}^n u_i \right)\left(\sum_{i=1}^n z_i^2\right) = 0. 
\end{align*}
where we used the notation $z_i := v_i/\sqrt{u_i}$ and the Cauchy-Schwarz inequality. 
Consequently, it follows that
$\Hc=\mathcal{A}+\mathcal{B}$ is positive semidefinite. It remains to show the strict positive definiteness of $\Hc$. For this, we take a vector  $v \in \mathbb{R}^n$ and show that if $v \cdot \Hc v = 0$, then it follows that $v_i= 0$ for $i=1,\ldots, n$. Let be $v \cdot \Hc v = 0$, then since matrices
$\mathcal{A}$ and $\mathcal{B}$ are positive semidefinite, it holds $v\cdot \mathcal{B}v=0$, and
$v\cdot \mathcal{A}v=0$. Now, from  $v\cdot \mathcal{B}v=0$ follows directly that 
 $\sum_{i=1}^n v_i = 0$. On the other side, from
\begin{align*}
 0 = v\cdot \mathcal{A}v = \sum_{i=1}^n \frac{v_i^2}{u_i} - \frac{1}{M}\left(\sum_{i=1}^n v_i\right)^2 = \sum_{i=1}^n \frac{v_i^2}{u_i},
\end{align*}
we get directly  that $v_i=0$ for all $i=1,\ldots, n$. Therefore $\Hc$ is positive definite in $\mathcal{D}$.
\end{proof}
\begin{lemma}[Invertibility of $(h^*)'$] 
 The function $(h^*)': \mathcal{D} \to \mathbb{R}^n$ is invertible, where $ \mathcal{D}$ is defined in \eqref{Def.D}.
\end{lemma}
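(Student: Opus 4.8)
**Proof proposal for Lemma (Invertibility of $(h^*)'$).**

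The plan is to show that $(h^*)': \cD \to \R^n$, $u\mapsto w$ with $w_i$ given by \eqref{entr.var.2}, is a bijection onto $\R^n$. Injectivity is immediate from the previous lemma: since $(h^*)'$ has Jacobian $\Hc(u)$ which is symmetric and positive definite on the convex set $\cD$, the map $(h^*)'$ is the gradient of a strictly convex function $h^*(\cdot|u_D)$, hence strictly monotone, hence injective. So the work is entirely in surjectivity.

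For surjectivity I would argue as follows. Fix $w\in\R^n$; we must solve $(h^*)'(u)=w$ for $u\in\cD$. From \eqref{entr.var.2} one can eliminate variables: each component gives $u_i = \big(u_{D,i} q(M_D)/p(M_D)\big)\,e^{w_i}\,p(M)/q(M)$, so writing $\beta_i := u_{D,i}q(M_D)/p(M_D)\,e^{w_i} > 0$ and $g(M) := p(M)/q(M)$, summing over $i$ yields the scalar equation
\begin{align}\label{scalar.surj}
M = g(M)\sum_{i=1}^n \beta_i.
\end{align}
Thus it suffices to find $M\in(0,1)$ solving \eqref{scalar.surj}; then $u_i := \beta_i g(M)$ reconstructs a preimage, and one checks $\sum u_i = M < 1$ and $u_i>0$, so $u\in\cD$. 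To solve \eqref{scalar.surj}, consider $F(M) := M/g(M) = M q(M)/p(M)$ on $(0,1)$. Using the asymptotics of the auxiliary lemma: as $M\to 0$, \eqref{asy.q.Mto0} gives $q(M)\sim (C_3/p(0)) M^a$ while $p(M)\to p(0)>0$, so $F(M)\sim (C_3/p(0)^2) M^{a+1}\to 0$; as $M\to 1$, \eqref{esti.4} gives $q(M)/p(M)=\exp\big((C_2+o(1))(1-M)^{-\kappa}\big)\to\infty$, so $F(M)\to\infty$. Since $F$ is continuous on $(0,1)$, by the intermediate value theorem it attains every value in $(0,\infty)$, in particular the value $\sum_i\beta_i>0$; this produces the desired $M$, completing surjectivity.

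The main obstacle, and the only genuinely delicate point, is verifying the boundary asymptotics cleanly — in particular that $F(M)\to 0$ as $M\to 0$ and $F(M)\to\infty$ as $M\to 1$ — which relies on plugging the asymptotics \eqref{esti.4} and \eqref{asy.q.Mto0} into $F$ and checking the powers of $(1-M)$ and $M$ dominate correctly; the strict monotonicity of $F$ is actually not needed, only continuity plus the limiting behavior, which sidesteps any careful analysis of $q'$. (If one did want monotonicity of $F$, note $F'(M) = (q(M)/p(M)) + M\,(q/p)'$, and from \eqref{H.ij} the quantity $M(q/p)'/(q/p)$ relates to the last term of $\Hc_{ij}$ times $M$, but this is more than we need.) Everything else — injectivity, the elimination step, and the reconstruction of $u\in\cD$ from $M$ — is routine bookkeeping.
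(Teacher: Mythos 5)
Your proposal is correct and follows essentially the same route as the paper: both reduce $(h^*)'(u)=w$ to the scalar equation $\Phi(M)=\sum_i \beta_i$ for the same function $\Phi(M)=Mq(M)/p(M)$, solve it for $M\in(0,1)$ using $\Phi(0^+)=0$ and $\Phi(M)\to\infty$ as $M\to1$, and then reconstruct $u_i$ from \eqref{L6:1}. The only cosmetic difference is that the paper gets uniqueness of $M$ from strict monotonicity of $\Phi$, while you obtain injectivity separately from the positive definiteness of $\Hc$ and use only continuity plus the boundary limits for existence; both are valid.
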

\begin{proof} 
First, note that due to \eqref{entr.var.2} we have (slight change)
\begin{align}
 \frac{u_{D,i}}{M_D} e^{w_i} = \frac{u_i}{M_D}\frac{q(M)/p(M)}{q(M_D)/p(M_D)}.
 \label{L6:1}
  \end{align}
Now we define the auxiliary function
\begin{align}\label{aux.func}
 \Phi(M):=\frac{Mq(M)}{p(M)}. 
\end{align}
After summing the relation \eqref{L6:1} for $i=1,\ldots n$, one gets 
\begin{align}
\Phi(M)=\Phi(M_D)\sum_{i=1}^n \frac{u_{D,i}}{M_D}e^{w_i}. 
 \label{eq.Phi}
 \end{align}
Note that function $\Phi(M)$ is strictly increasing with $\Phi(0)=0$ and $\lim_{M\to 1}\Phi(M)=+\infty$.
 Thus, there exists a unique solution $M=M[w] \in (0,1)$ to the nonlinear equation \eqref{eq.Phi}.
Replacing $M=M[w]$ into relation \eqref{L6:1} and then solving the resulting equation for $u_i$ yields the statement.
\end{proof}
\begin{lemma}[Lower bound for the entropy dissipation]\label{bound.nabla} For any sufficiently smooth function $u : Q_T\to\cD$ it holds that 
 \begin{align*}
  &\int_0^T \int_{\Omega} p^2(M)\left|\na \sqrt{\frac{u_iq(M)}{p(M)}} \right|^2\,dxdt\\
  &\qquad \geq C \int_0^T \int_{\Omega} \frac{M^{a-1}|\na M|^2}{(1-M)^{1+b+\kappa}}\, dxdt 
  + \sum_{i=1}^n \int_0^T \int_{\Omega} p(M)q(M)|\na \sqrt{u_i}|^2\,dxdt, 
 \end{align*}
 where $\kappa>0$ is defined in \eqref{Def.p}.
\end{lemma}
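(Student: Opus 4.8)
The plan is to expand the square on the left-hand side, writing $v_i = u_i q(M)/p(M)$, so that $p^2(M)|\nabla\sqrt{v_i}|^2 = \tfrac14 p^2(M)|\nabla v_i|^2/v_i$, and to decompose $\nabla v_i$ into a part proportional to $\nabla u_i$ and a part proportional to $\nabla M$. Concretely, $\nabla v_i = \frac{q(M)}{p(M)}\nabla u_i + u_i\left(\frac{q(M)}{p(M)}\right)'\nabla M$. Since the two contributions are not orthogonal I would first handle the ``pure $u_i$'' direction: summing over $i$ and using $\sum_i u_i = M$, the cross terms can be organized by the elementary inequality $|x+y|^2 \geq (1-\theta)|y|^2 - (\theta^{-1}-1)|x|^2$ for a suitable $\theta\in(0,1)$ (or simply $|x+y|^2\ge \tfrac12|y|^2 - |x|^2$), which splits the dissipation into a term controlling $\sum_i p(M)q(M)|\nabla\sqrt{u_i}|^2$ plus a term of the form $c\,(\text{coefficient})\,|\nabla M|^2$, at the cost of a term we must reabsorb.

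\medskip

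First I would record the needed asymptotics. From the previous lemma, $p(M)q(M)\sim C_1(1-M)^{1+\kappa-b}$ as $M\to1$, and from \eqref{Def.q} one has $q(M)/p(M) = M^{-1}\int_0^M s^a(1-s)^{-b}p(s)^{-2}\,ds$, so that $\big(q/p\big)'(M)$ can be computed explicitly; near $M=1$ its logarithmic derivative is governed by \eqref{esti.4}, giving $\big(\log(q/p)\big)' \sim \kappa C_2(1-M)^{-1-\kappa}$, hence the ``$\nabla M$'' coefficient $p^2(M)\,u_i\,\big((q/p)'\big)^2/v_i = u_i\,p(M)q(M)\,\big(\log(q/p)\big)'^2/(M^2\cdot\text{stuff})$ behaves like $(1-M)^{1+\kappa-b}(1-M)^{-2-2\kappa} = (1-M)^{-1-b-\kappa}$ up to constants and powers of $M$. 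Near $M=0$, \eqref{asy.q.Mto0} gives $q(M)\sim C_3 M^a/p(0)$, so $p(M)q(M)\sim C M^a$ and the $\nabla M$ coefficient degenerates like $M^{a-1}$; matching the two regimes one obtains exactly the weight $M^{a-1}(1-M)^{-1-b-\kappa}$ claimed. I would then choose the splitting parameter $\theta$ and absorb the leftover $-|x|^2 = -c\,p(M)q(M)|\nabla\sqrt{u_i}|^2$-type term into the good $\sum_i p(M)q(M)|\nabla\sqrt{u_i}|^2$ term by shrinking its constant; since both the target constant and the absorbed term carry the same weight $p(M)q(M)$, this is consistent.

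\medskip

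The main obstacle is the intermediate region $M$ bounded away from $0$ and $1$, where I must verify that the coefficient of $|\nabla M|^2$ extracted from the algebraic inequality is bounded below by a positive constant times $M^{a-1}(1-M)^{-1-b-\kappa}$ \emph{uniformly}, not just asymptotically at the endpoints. This requires a continuity/compactness argument: the ratio of the extracted coefficient to the target weight is a continuous, strictly positive function on $(0,1)$ with positive finite limits at both endpoints (by the asymptotics above), hence is bounded below on $[0,1]$ by a positive constant. One subtlety is that the decomposition of $\nabla v_i$ mixes $\nabla u_i$ and $\nabla M = \sum_j \nabla u_j$, so after summing over $i$ the $|\nabla M|^2$ coefficient is $\frac14\sum_i p^2(M) u_i^2 ((q/p)')^2/v_i = \frac14 p(M)q(M)\big(\log(q/p)'\big)^2\sum_i u_i^2/(\sum_j u_j)$ — wait, more carefully $\sum_i u_i((q/p)'/\,\cdot\,)$ must be handled via Cauchy–Schwarz exactly as in the proof of Lemma \ref{pos.def}, where $\sum_i u_i^2/M \le M$; this is where assumption \eqref{Def.p}, in particular the precise decay rate $\kappa$ in $-(1-M)^{1+\kappa}p'/p\to c$, enters, guaranteeing the exponent $-1-b-\kappa$ rather than something worse. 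Once the pointwise weight comparison is established, integrating over $Q_T$ finishes the proof.
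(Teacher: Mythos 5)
Your overall plan (expand the square of $\nabla\sqrt{u_iq/p}$, sum over $i$ using $\sum_i u_i=M$, and compare the resulting $|\nabla M|^2$ coefficient with the weight $M^{a-1}(1-M)^{-1-b-\kappa}$ through its behaviour at $M\to0$, $M\to1$ and in between) is the right shape, but the way you treat the cross terms contains a genuine flaw. Writing $f(M)=\sqrt{q(M)/p(M)}$, the per-species quantity is the perfect square $\bigl|f\nabla\sqrt{u_i}+\sqrt{u_i}\,f'\nabla M\bigr|^2$, which can vanish even when both $\nabla\sqrt{u_i}$ and $\nabla M$ are nonzero; consequently no Young-type splitting $|x+y|^2\ge(1-\theta)|y|^2-(\theta^{-1}-1)|x|^2$ can ever deliver positive multiples of \emph{both} $pq|\nabla\sqrt{u_i}|^2$ and the $|\nabla M|^2$ term, and your "absorb the leftover into the good $\sum_i pq|\nabla\sqrt{u_i}|^2$ term" step has nothing to absorb into — that good term is exactly what the splitting destroys. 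The paper does not split at all: after summing over $i$ (using $\sum_i 2\sqrt{u_i}\nabla\sqrt{u_i}=\nabla M$) one gets the \emph{exact identity}
\begin{align*}
p^2\sum_{i=1}^n\Bigl|\nabla\sqrt{\tfrac{u_iq}{p}}\Bigr|^2
= pq\sum_{i=1}^n|\nabla\sqrt{u_i}|^2 + p^2 f'(M)\bigl(Mf'(M)+f(M)\bigr)|\nabla M|^2 ,
\end{align*}
and the decisive point is that $f'>0$, indeed $2f'/f\ge a/M$, which is proved from the integral formula for $q/p$ together with the assumption that $p$ is decreasing (the integrand $s^a(1-s)^{-b}p(s)^{-2}$ is increasing). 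This positivity is nowhere established in your proposal, yet it is exactly what makes the cross term a good term, what gives the lower bound $CM^{a-1}$ on $[0,1/2]$ (via $p^2M(f')^2\ge \tfrac{a^2}{4M}pq\ge CM^{a-1}$), and what justifies your own assertion that the ratio of the extracted coefficient to the target weight is "strictly positive on $(0,1)$" — as written, that assertion begs the question.

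A secondary gap: you obtain the behaviour of $(\log(q/p))'$ near $M=1$ (and of $f'$ near $M=0$) by differentiating the asymptotic relations \eqref{esti.4} and \eqref{asy.q.Mto0}, which is not legitimate without further argument. The paper avoids this by applying l'H\^opital directly to the quotient $(1-M)^{1+\kappa-b}p^{-2}(M)\big/\int_0^M(1-s)^{-b}p^{-2}(s)\,ds$, where the hypothesis $-(1-M)^{1+\kappa}p'/p\to c$ from \eqref{Def.p} enters, yielding $f'/f\ge c_1(1-M)^{-1-\kappa}$ on $[1/2,1)$ and hence, with \eqref{est.mto1}, the weight $(1-M)^{-1-b-\kappa}$ near $M=1$. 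Once you replace the Young/absorption step by the exact identity plus the proof that $f'>0$, and make the endpoint estimates by l'H\^opital on the integral representation rather than by differentiating asymptotics, your argument becomes essentially the paper's proof.
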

\begin{proof}
 Let us define $f(M) := \sqrt{q(M)/p(M)}$. Direct calculation gives:
\begin{align*}
p^2(M)\sum_{i=1}^n \left|\nabla\sqrt{\frac{u_i q(M)}{p(M)}}\right|^2 
=p(M)q(M) \sum_{i=1}^n|\na\sqrt u_i|^2 + p(M)^2f'(M)\left(M f'(M) + f(M) \right)|\na M|^2 .
\end{align*}
Let us first show that for $0<M<1$ the function $f'(M)$ is strictly positive.    
Note that   
\begin{align*}
 2\frac{f'(M)}{f(M)} 
 = \frac{d}{d M}\log \Big( \frac{q(M)}{p(M)} \Big). 
 \end{align*}  
 Using the definition \eqref{Def.q} we have
 \begin{align}
 2\frac{f'(M)}{f(M)} 
=\frac{M^a (1-M)^{-b}p^{-2}(M)}{\int_0^M s^a (1-s)^{-b}p^{-2}(s)ds} - \frac{1}{M}.
\label{L7.eq.1}
 \end{align}
Since $p$ is decreasing it holds $(1-s)^{-b}p^{-2}(s)\leq (1-M)^{-b}p^{-2}(M)$ for $0\leq s\leq M$. Therefore
\begin{align*}
2\frac{f'(M)}{f(M)} &\geq \frac{M^a }{\int_0^M s^a ds} - \frac{1}{M} = \frac{a}{M} > 0, \quad 0<M<1,
\end{align*}
from where it follows that $f'(M)>0$ for $0<M<1$.
Using this result, we get
\begin{align*}
p^2(M) f'(M)\left(M f'(M) + f(M) \right)\geq p^2(M) M (f'(M))^2 \geq \frac{a^2 p^2(M) }{4M}(f(M))^2 = \frac{a^2 q(M)p(M)}{4M}.
\end{align*}
Since $p(s)\leq p(0)$ and $p(0) > 0$ for $s>0$, one has
\begin{align*}
p(M)q(M)  = \frac{p^2(M)}{M} \int_0^M \frac{s^a}{(1-s)^b} \frac{ds}{p(s)^2}\geq \frac{p^2(M)}{p^2(0)}\frac{1}{M}\int_0^M  \frac{s^a}{(1-s)^b} ds. 
\end{align*}
Since $p(M)\geq p(1/2)$ for $0\leq M \leq 1/2$ and $(1-s)^{-b} \geq 1$ we have
\begin{align*}
 p(M)q(M)  \geq \frac{p^2(1/2)}{p^2(0)}\frac{M^a}{a+1}.
\end{align*}
Therefore we get
\begin{equation}
p^2(M) f'(M)\left(M f'(M) + f(M) \right)\geq C M^{a-1},\qquad 0\leq M\leq \frac{1}{2} .
\label{est.left}
\end{equation}
On the other side, let us find the lower bound of term $ p^2(M) f'(M)\left(M f'(M) + f(M) \right) $  for $1/2 \leq M  < 1$.
For that purpose, we can make the following estimate:
\begin{align*}
 p(M)^2 f'(M) &\left(M f'(M) + f(M) \right) \geq \frac12 p^2(M) (f'(M))^2 
    = \frac12 p(M)q(M)\left(\frac{f'(M)}{f(M)}\right)^2.
\end{align*}
It remains to bound the term $f'(M)/f(M)$ from below for $1/2 \leq M  < 1$.
For the moment, we go back to \eqref{L7.eq.1}. Note that for $s\leq M$ it holds $s^a\leq M^a$, so the following estimate holds:
\begin{align*}
2\frac{f'(M)}{f(M)}\geq \frac{(1-M)^{-b}p^{-2}(M)}{\int_0^M (1-s)^{-b}p^{-2}(s)ds} - \frac{1}{M}.
\end{align*}
We want to find a lower bound for the right-hand side of the above inequality for $M$ close to 1. 
Thanks to \eqref{est.p.1} we have
$$ \lim_{M\to 1}(1-M)^{1+\kappa-b}p^{-2}(M)= +\infty.$$
Applying the l'H\^{o}pital's rule and using \eqref{Def.p}, one gets
\begin{align*}
\lim_{M\to 1}\frac{(1-M)^{1+\kappa-b}p^{-2}(M)}{\int_0^M (1-s)^{-b}p^{-2}(s)ds}
=\lim_{M\to 1}\left( (1+\kappa-b)(1-M)^\kappa - 2(1-M)^{1+\kappa}\frac{p'(M)}{p(M)} \right) = 2c .
\end{align*}
It follows that there exists a constant $c_1>0$ such that 
$$ \frac{f'(M)}{f(M)}\geq c_1(1-M)^{-(1+\kappa)}, \quad \textrm{ for }\;\frac{1}{2}\leq M < 1.$$ 
From the above estimate and \eqref{est.mto1} we deduce 
\begin{align}
 p(M)^2 f'(M)  \left(M f'(M) + f(M) \right) 
  \geq C (1-M)^{-1-b-\kappa},\qquad \frac{1}{2}\leq M < 1.
     \label{est.right}
\end{align}
Putting \eqref{est.left}, \eqref{est.right} together yields that there exists a constant $C>0$ such that
\begin{align*}
 	p(M)^2f'(M)\left(Mf'(M) + f(M)\right) \geq \frac{C M^{a-1}}{(1-M)^{1+b+\kappa}}, \quad 0\leq M <1, 
     \end{align*}   
which finishes the proof of this Lemma.
\end{proof}
\section{Proof of Theorem~\ref{Thm.Existence}}\label{Sec.Exis}
For $m\in\Z$, $m\geq 1$ we define the space 
$$ H^m_D(\Omega) = \left\{ u\in H^m(\Omega)~:~ u\equiv 0~\mbox{on }\Gamma_D \right\}. $$
The proof is divided into several steps.\medskip\\
{\bf Step 1: discretization.}
Fix $T>0$. For $N\in \N$ we define $\tau = T/N$, $t_j = \tau j$ ($j=0,\ldots,N$), $u_{i}^{0} = u_{i,0}$ $(i=1,\ldots,n)$. 
In order to have a compact embedding $H^m(\Omega)\hookrightarrow L^\infty(\Omega)$ we choose $m$ to be the smallest integer such that $m>d/2$. 
For $j\geq 1$ consider the problem: 
\begin{align}\nonumber
&\mbox{given $w^{j-1}\in H_D^m(\Omega)$, find $w^j\in H_D^m(\Omega)$ such that}\\
\label{d.r.eq}
&\sum_{i=1}^n\int_\Omega\left( \frac{u_i^j - u_i^{j-1}}{\tau}\phi_i
+ \alpha_i p(M^j)^2\nabla\left(\frac{u_i^j q(M^j)}{p(M^j)}\right)\cdot\nabla\phi_i - r_i(u^j)\phi_i \right)dx \\
&\qquad = - \tau \sum_{i=1}^n(w_i^j , \phi_i)_{H^m(\Omega)}, \qquad 
\forall \phi=(\phi_1,\ldots,\phi_n)\in H^m_D(\Omega)^n,\nonumber
\end{align}
where $u^{j-1}, u^j : \Omega\times (0,T)\to\R^n$ are defined by 
$$h'(u^{j-1})-h'(u_D)=w^{j-1},\qquad h'(u^{j})-h'(u_D)=w^{j} , $$
and $M^j\equiv\sum_{i=1}^n u_i^j$, while
$(\cdot,\cdot)_{H^m(\Omega)}$ denotes the standard scalar product in $H^m(\Omega)$.
We point out that, since $h' : \cD\to\R^n$ is invertible, then $u^{j-1}$, $u^j$ are well defined.
\medskip\\
{\bf Step 2: fixed point.} We solve \eqref{d.r.eq} via Leray-Schauder fixed point theorem. Let us define the mapping
$$\fun{F}{L^\infty(\Omega)\times [0,1]}{L^\infty(\Omega)}{(w^*,\sigma)}{w}$$
where $w$ is the solution of the linearized approximated problem
\begin{align}
\label{d.r.l.eq}
&\tau \sum_{i=1}^n (w_i, \phi_i)_{H^m(\Omega)} \\
\nonumber
& = -\sigma\sum_{i=1}^n\int_\Omega\left( \frac{u_i^* - u_i^{j-1}}{\tau}\phi_i
+ \alpha_i p(M^*)^2\nabla\left(\frac{u_i^* q(M^*)}{p(M^*)}\right)\cdot\nabla\phi_i - r_i(u^*)\phi_i \right)dx \\
&\qquad \forall\phi\in H_D^m(\Omega; \R^n),\nonumber
\end{align}
with $u^* : \Omega\times (0,T)\to\R^n$ defined by $h'(u^{*})-h'(u_D)=w^{*}$ and $M^*\equiv\sum_{i=1}^n u_i^*$.

We first point out that $F$ is well defined. In fact, assumption $m>d/2$ implies that $H^m(\Omega)\hookrightarrow L^\infty(\Omega)$.
Since $w^*\in H^m(\Omega; \R^n)$, this means that $\inf_\Omega M^* > 0$, $\sup_\Omega M^* < 1$, and $u_i^*, M^*\in H^m(\Omega)$.
These properties ensure that the right-hand side of \eqref{d.r.l.eq} defines a continuous linear functional $f : \phi\in H^m(\Omega; \R^n)\mapsto f(\phi)\in\R$.
Therefore we can deduce by Lax-Milgram Lemma the existence of a unique solution $w\in H_D^m(\Omega; \R^n)\subset L^\infty(\Omega; \R^n)$ to \eqref{d.r.l.eq}.

Next, we observe that $F(\cdot,0)\equiv 0$ (trivial). Choosing $\phi=w$ in \eqref{d.r.l.eq} allows us to easily deduce that $\|w\|_{H^m(\Omega)}\leq C$
for some constant $C=C[w^*, u^{j-1}]>0$. This bound and the compact embedding $H^m_D(\Omega)\hookrightarrow L^\infty(\Omega)$ imply that $F$
is compact. By standard arguments we can prove that $F$ is continuous. 

Let us now assume that $w\in H^m(\Omega; \R^n)$ is a fixed point of $F(\cdot,\sigma)$ for some $\sigma\in [0,1]$, and rename $u\equiv u^*$, $M\equiv M^*$ for better readability.
Define $q_D\equiv q(\sum_{i=1}^n u_{D,i})$, $p_D\equiv p(\sum_{i=1}^n u_{D,i})$. By choosing $\phi_i=w_i$ in \eqref{d.r.l.eq} and exploiting \eqref{entr.var.2}
we obtain
\begin{align*}
\sigma\sum_{i=1}^n\int_\Omega\left( \frac{u_i - u_i^{j-1}}{\tau}w_i
+ 4\alpha_i p(M)^2\left|\nabla\sqrt{\frac{u_i q(M)}{p(M)}}\right|^2 - r_i(u)w_i \right)dx 
+\tau\sum_{i=1}^n\|w_i\|_{H^m(\Omega)}^2 = 0 .
\end{align*}
However, since $w = h'(u) - h'(u_D)$ and $h$ is convex, it follows
$$ \sum_{i=1}^n (u_i-u_i^{j-1}) w_i \geq h^*(u|u_D) - h^*(u^{j-1}|u_D), $$
and therefore
\begin{align*}
\frac{\sigma}{\tau}\int_\Omega h^*(u|u_D)dx 
&+ 4\sigma\sum_{i=1}^n\alpha_i\int_\Omega p(M)^2\left|\nabla\sqrt{\frac{u_i q(M)}{p(M)}}\right|^2 dx \\
& +\tau\sum_{i=1}^n\|w_i\|_{H^m(\Omega)}^2 \leq \frac{\sigma}{\tau}\int_\Omega h^*(u^{j-1}|u_D)dx+ \sigma\sum_{i=1}^n\int_\Omega  r_i(u)w_i dx .
\end{align*}
By applying Lemma~\ref{bound.nabla} we deduce
\begin{align}\label{d.e.i.1}
& \frac{\sigma}{\tau}\int_\Omega h^*(u|u_D)dx + 2\sigma\sum_{i=1}^n\alpha_i\int_\Omega p(M)^2\left|\nabla\sqrt{\frac{u_i q(M)}{p(M)}}\right|^2 dx
+\tau\sum_{i=1}^n\|w_i\|_{H^m(\Omega)}^2\\ 
&\qquad\leq \frac{\sigma}{\tau}\int_\Omega h^*(u^{j-1}|u_D)dx+\sigma\sum_{i=1}^n\int_\Omega  r_i(u)w_i dx
-C_0\sigma \int_{\Omega}\frac{M^{a-1}|\na M|^2}{(1-M)^{1+b+\kappa}}\,dx .\nonumber
\end{align}
We are going to show that the right-hand side of the above inequality can be bound by the entropy.
Let $M_D = \sum_{i=1}^n u_{D,i}\in (0,1)$. It holds that
\begin{align*}
&\int_0^T \int_{\Omega}\frac{M^{a-1}}{(1-M)^{1+b+\kappa}}|\na M|^2\,dxdt \\
&\qquad\geq \int_{\{M\geq M_D\}}\frac{M^{a-1}}{(1-M)^{1+b+\kappa}}|\na M|^2\,dxdt \\
&\qquad\geq M_D^{a-1}\int_0^T \int_{\Omega}\frac{\charf_{\{M \geq M_D\}}}{(1-M)^{1+b+\kappa}}|\na M|^2\,dxdt\\
&\qquad = C\int_0^T \int_{\Omega}\Big|\na\Big( (1-M)^{\frac{1-b-\kappa}{2}}-(1-M_D)^{\frac{1-b-\kappa}{2}}\Big)_+ \Big|^2\,dxdt\\
&\qquad\geq C_P\int_0^T \int_{\Omega}  \Big( (1-M)^{\frac{1-b-\kappa}{2}}-(1-M_D)^{\frac{1-b-\kappa}{2}}\Big)_+^2  \,dxdt,
 \end{align*}
where we used the Poincar\'{e} inequality in the last line. Thus we obtain
\begin{align}\label{bound.sing}
&\int_0^T \int_{\Omega}\frac{M^{a-1}|\na M|^2}{(1-M)^{1+b+\kappa}}\,dxdt \geq 
c\int_0^T \int_{\Omega}(1-M)^{1-b-\kappa}dx dt - C .
 \end{align}
Thanks to \eqref{hp.r1}--\eqref{hp.r3} the reaction term can be bounded as follows:
\begin{align*}
 &\sum_{i=1}^n \int_{\Omega} r_i(u) w_i dx \leq 
 \lambda_r\int_\Omega (1+h^*(u\vert u_D))dx \\
 &\qquad + C_r \sum_{i=1}^n \int_\Omega \frac{u_i^s}{(1-M)^\mu}
\left|\log\left(\frac{u_i q(M)}{p(M)} \right) - \log\left(\frac{u_{D,i} q_D}{p_D} \right) \right|dx \\
&\leq  \lambda_r\int_\Omega (1+h^*(u\vert u_D))dx + C_r c_1\int_\Omega (1-M)^{-\mu}dx \\
&\qquad + C_r c_2\int_\Omega\frac{M^s |\log(q(M)/p(M))|}{(1-M)^\mu}dx .
\end{align*}
By using \eqref{esti.4} we deduce
\begin{align*}
&\sum_{i=1}^n \int_{\Omega} r_i(u) w_i dx \leq \lambda_r\int_\Omega (1+h^*(u\vert u_D))dx 
+ C_r C\int_\Omega (1-M)^{-\kappa-\mu}dx .
\end{align*}
Due to assumption \eqref{hp.r3} we have $\mu<b-1$, so we can apply Young inequality to the right-hand side of the above estimate and conclude
\begin{align*}
&\sum_{i=1}^n \int_{\Omega} r_i(u) w_i dx \leq \lambda_r\int_\Omega (1+h(u))dx 
+ C_r\varepsilon \int_{\Omega}(1-M)^{1-b-\kappa}\,dx + C_r C(\varepsilon).
\end{align*}
By choosing $\eps>0$ small enough in the above estimate and exploiting \eqref{bound.sing}, from \eqref{d.e.i.1} we get
 \begin{align}\label{d.e.i.fix}
\frac{\sigma}{\tau}\int_\Omega h^*(u|u_D)dx 
&+ 2\sigma\sum_{i=1}^n \alpha_i \int_\Omega p(M)^2\left|\nabla\sqrt{\frac{u_i q(M)}{p(M)}}\right|^2 dx \\
& +\tau\sum_{i=1}^n\|w_i\|_{H^m(\Omega)}^2 \leq \left(\frac{\sigma}{\tau}+C_1\right)\int_\Omega h^*(u^{j-1}|u_D)dx + C_2 \nonumber
\end{align}
for some suitable constants $C_1, C_2\geq 0$, which are independent of both $\sigma$ and $\tau$.
Moreover, the constants $C_1, C_2$ can be chosen to be equal to zero in the case that $\lambda_r = C_r = 0$ in \eqref{hp.r2}, \eqref{hp.r3}.
In particular, \eqref{d.e.i.fix} yields a $\sigma-$uniform bound for $w$ in $H^m(\Omega)$, and a fortiori in $L^\infty(\Omega)$.

Thanks to Leray-Schauder's fixed point theorem we infer the existence of a fixed point $w^j\in H^m(\Omega;\R^n)$ for $F(\cdot,1)$, that is, a solution to \eqref{d.r.eq}.
\medskip\\
{\bf Step 3: uniform in $\tau$ a-priori estimates.} Let us define the piecewise constant-in-time functions
$$ \utau(t) = u^0\charf_{\{0\}}(t) + \sum_{j=1}^N u^j\charf_{(t_{j-1},t_j]}(t),\qquad
\wtau(t) = w^0\charf_{\{0\}}(t) + \sum_{j=1}^N w^j\charf_{(t_{j-1},t_j]}(t), $$
and let $\Mtau = \sum_{i=1}^n \utau_i$. We also define the discrete backward time derivative operator $D_\tau$ as follows:
for every function $f : Q_T\to \R$,
$$ D_\tau f(x,t) = \frac{f(x,t)-f(x,t-\tau)}{\tau}\qquad x\in\Omega,\quad t\in [\tau, T] . $$
Now \eqref{d.e.i.fix} can be written in the form of the discrete entropy inequality given by following:
\begin{lemma}[Discrete entropy inequality]\label{lem.dei} For all $t\in [0,T]$ it holds
 \begin{align}\label{d.e.i}
D_\tau\int_\Omega h^*(\utau|u_D)dx &+ 2\sum_{i=1}^n \alpha_i \int_\Omega p(\Mtau)^2\left|\nabla\sqrt{\frac{\utau_i q(\Mtau)}{p(\Mtau)}}\right|^2 dx \\
& +\tau\sum_{i=1}^n\|\wtau_i\|_{H^m(\Omega)}^2 \leq C_1\int_\Omega h^*(\utau|u_D)dx + C_2, \nonumber
\end{align}
for some suitable constants $C_1, C_2\geq 0$.
Moreover, the constants $C_1, C_2$ can be chosen to be equal to zero in the case that $\lambda_r = C_r = 0$ in \eqref{hp.r2} and \eqref{hp.r3}.
\end{lemma}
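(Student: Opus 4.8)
The inequality \eqref{d.e.i} is essentially a reformulation of the estimate \eqref{d.e.i.fix}, which was already established in Step~2 for \emph{every} fixed point of the map $F(\cdot,\sigma)$, with constants $C_1,C_2\geq 0$ that depend neither on $\sigma\in[0,1]$ nor on $\tau$. The plan is therefore as follows. First I would specialize \eqref{d.e.i.fix} to $\sigma=1$: by the Leray--Schauder argument of Step~2 there exists a fixed point $w^j\in H^m_D(\Omega;\R^n)$ of $F(\cdot,1)$, which solves the time-discrete problem \eqref{d.r.eq}, and the associated $u^j$ (defined by $h'(u^j)-h'(u_D)=w^j$) takes values in $\cD$, with $M^j=\sum_{i=1}^n u^j_i$.

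Next I would rearrange \eqref{d.e.i.fix} with $\sigma=1$ by subtracting $\frac{1}{\tau}\int_\Omega h^*(u^{j-1}|u_D)\,dx$ from both sides. This turns the first term into the difference quotient $\frac{1}{\tau}\big(\int_\Omega h^*(u^j|u_D)\,dx-\int_\Omega h^*(u^{j-1}|u_D)\,dx\big)$. On the subinterval $(t_{j-1},t_j]$ one has $\utau=u^j$, $\Mtau=M^j$, $\wtau=w^j$ and $\utau(\cdot-\tau)=u^{j-1}$, by the very definitions of the piecewise-constant interpolants and of $D_\tau$; hence this difference quotient equals $D_\tau\int_\Omega h^*(\utau|u_D)\,dx$, and the dissipation and regularization terms become exactly those appearing in \eqref{d.e.i}. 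Since $j$ ranges over $1,\ldots,N$ this covers every $t\in(0,T]$ (the endpoint $t=0$ being a matter of convention).

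There remains one minor discrepancy: \eqref{d.e.i.fix} carries $h^*(u^{j-1}|u_D)$ on the right, whereas \eqref{d.e.i} carries $h^*(\utau|u_D)=h^*(u^j|u_D)$. To bridge it, I would first drop the nonnegative dissipation and regularization terms and apply a discrete Gronwall lemma to $D_\tau\int_\Omega h^*(\utau|u_D)\,dx\leq C_1\int_\Omega h^*(\utau(\cdot-\tau)|u_D)\,dx+C_2$, obtaining a bound $\sup_{0\leq t\leq T}\int_\Omega h^*(\utau|u_D)\,dx\leq C(T)$ that is uniform in $\tau$; inserting this estimate back into the same inequality gives $\int_\Omega h^*(u^{j-1}|u_D)\,dx\leq\int_\Omega h^*(u^j|u_D)\,dx+\tau\,(C_1 C(T)+C_2)$, so the extra $O(\tau)$ term may be absorbed into the constant $C_2$. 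When $\lambda_r=C_r=0$ the constants already vanish in \eqref{d.e.i.fix}, so this last step is empty and \eqref{d.e.i} holds with $C_1=C_2=0$.

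I do not expect a genuine obstacle here: the substantial part --- bounding the (possibly singular) reaction contribution $\sum_{i=1}^n\int_\Omega r_i(u)w_i\,dx$ by the singular part of the entropy dissipation via Lemma~\ref{bound.nabla}, the Poincar\'e-type estimate \eqref{bound.sing}, the assumptions \eqref{hp.r2}--\eqref{hp.r3} and Young's inequality, all with constants independent of $\tau$ and $\sigma$ --- was already carried out in Step~2. Lemma~\ref{lem.dei} is merely the bookkeeping restatement of that estimate in terms of the time-interpolants $\utau,\wtau,\Mtau$; the only point that still requires a little care is the $u^{j-1}\to u^j$ swap on the right-hand side just discussed.
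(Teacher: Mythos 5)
Your overall route is the same as the paper's: Lemma~\ref{lem.dei} is obtained by taking $\sigma=1$ in \eqref{d.e.i.fix} (legitimate, since the Leray--Schauder fixed point of $F(\cdot,1)$ solves \eqref{d.r.eq}), subtracting $\tau^{-1}\int_\Omega h^*(u^{j-1}|u_D)\,dx$ from both sides, and reading the result in terms of the interpolants $\utau,\wtau,\Mtau$; all the substantial work (the reaction estimate via \eqref{hp.r2}--\eqref{hp.r3}, Lemma~\ref{bound.nabla}, \eqref{bound.sing} and Young's inequality, with constants independent of $\sigma$ and $\tau$) was indeed already done in Step~2, and the paper presents the lemma exactly as such a restatement.

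However, your bridging step for trading $h^*(u^{j-1}|u_D)$ for $h^*(u^{j}|u_D)$ on the right-hand side relies on a reversed inequality. Writing $H_j:=\int_\Omega h^*(u^j|u_D)\,dx$, from $\tau^{-1}(H_j-H_{j-1})\leq C_1 H_{j-1}+C_2$ and the Gronwall bound $H_{j-1}\leq C(T)$ you may conclude $H_j\leq H_{j-1}+\tau\big(C_1C(T)+C_2\big)$, but not $H_{j-1}\leq H_j+\tau\big(C_1C(T)+C_2\big)$ as you claim: the entropy inequality controls increases of $H$, never decreases, and a single step with strong dissipation can make $H_j$ drop an $O(1)$ amount below $H_{j-1}$, so the claimed estimate fails in general. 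The conclusion you want still holds, but by a simpler absorption: using $H_{j-1}\leq C(T)$ you can bound the whole right-hand side $C_1H_{j-1}+C_2$ by the constant $C_1C(T)+C_2$, i.e. the lemma holds with the new $C_1=0$ and $C_2$ replaced by $C_1C(T)+C_2$; alternatively, keep $h^*(\utau(\cdot-\tau)|u_D)$ on the right-hand side, which is all that the discrete Gronwall argument in Step~3 and the limit passage in Step~5 require (the shift by $\tau$ is immaterial as $\tau\to0$). In the case $\lambda_r=C_r=0$ there is nothing to bridge, as you correctly note, so the assertion with $C_1=C_2=0$ is unaffected.
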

From Lemma~\ref{bound.nabla}, the entropy inequality \eqref{d.e.i} and estimate \eqref{bound.sing} 
we deduce (via a discrete Gronwall argument) the following 
bounds, which are uniform with respect to $\tau$:
\begin{align}
\label{bound.h}
\left\|h(\utau)\right\|_{L^\infty(0,T; L^1(\Omega))} &\leq C,\\
\label{bound.flux}
\left\| p(\Mtau)\nabla\sqrt{\frac{\utau_i q(\Mtau)}{p(\Mtau)}}\right\|_{L^2(0,T; L^2(\Omega))} &\leq C\quad (i=1,\ldots,n),\\
\label{bound.nau}
\left\|\sqrt{p(\Mtau)q(\Mtau)}\na\sqrt{\utau_i}\right\|_{L^2(0,T; L^2(\Omega))} &\leq C\quad (i=1,\ldots,n),\\
\label{bound.naM}
\left\|\na (\Mtau)^{\frac{a+1}{2}}\right\|_{L^2(0,T; L^2(\Omega))} &\leq C,\\
\label{bound.1minusM}
\left\|(1-\Mtau)^{1-b-\kappa}\right\|_{L^1(0,T; L^1(\Omega))} &\leq C,\\
\label{bound.wtau}
\tau^{1/2}\left\|\wtau\right\|_{L^2(0,T; H^m(\Omega))} &\leq C.
\end{align}
Moreover we recall that (by construction) $\utau(x,t)\in\cD$ a.e. $(x,t)\in\Omega\times (0,T)$, 
where $\cD$ is defined by \eqref{Def.D}. Therefore
\begin{equation}\label{bound.u.inf} 
\|\utau_i\|_{L^\infty(0,T; L^\infty(\Omega))}\leq C\quad (i=1,\ldots,n). 
\end{equation}
Furthermore, from \eqref{asy.q.Mto0}, \eqref{bound.nau} we get
\begin{equation}
\|(\Mtau)^{\frac{a-1}{2}}\na\utau_i\|_{L^2(Q_T)}\leq C \quad (i=1,\ldots,n).\label{bound.Mnau}
\end{equation}
We also point out that \eqref{esti.4}, \eqref{bound.h} imply
\begin{equation}
(1-\Mtau)^{1-\kappa}\in L^\infty(0,T; L^1(\Omega)).\label{bound.1minusM.2}
\end{equation}
The discretized-regularized system \eqref{d.r.eq} can be rewritten, in the new notation, as
\begin{align}\label{eq.utau}
& \sum_{i=1}^n\int_0^T \int_\Omega \left((D_\tau\utau_i)\phi_i + \alpha_i p^2(\Mtau)\na\left( \frac{\utau_i q(\Mtau)}{p(\Mtau)} \right)\cdot\na\phi_i - r_i(\utau)\phi_i\right)dxdt\\
&\qquad + \tau \sum_{i=1}^n\int_0^T(\wtau_i,\phi_i)_{H^m}dt = 0, \nonumber
\end{align}
for piecewise constant-in-time functions $\phi : [0,T]\to H^m_D(\Omega;\R^n)$. However, thanks to a standard density argument, \eqref{eq.utau} holds for all 
$\phi\in L^2(0,T; H^m_D(\Omega;\R^n))$. 

Next, we wish to find a $\tau-$uniform bound for $D_\tau\utau$. We first estimate the term $p^2(\Mtau)\na\left( \utau_i q(\Mtau)/p(\Mtau) \right)$.
We distinguish two cases.\\
{\em Case 1:} when $\kappa\geq b-1$, then $p(\Mtau)q(\Mtau)$ is bounded in $L^\infty(Q_T)$ thanks to \eqref{est.mto1}. It follows
\begin{align*}
& \left\|p^2(\Mtau)\na\left( \frac{\utau_i q(\Mtau)}{p(\Mtau)} \right)\right\|_{L^{2}(Q_T)}\\
&\leq 2\left\|\sqrt{\utau_i p(\Mtau)q(\Mtau)}\right\|_{L^{\infty}(Q_T)} \left\|p(\Mtau)\na\sqrt{\frac{\utau_i q(\Mtau)}{p(\Mtau)}}\right\|_{L^2(Q_T)}\\
&\leq 2\left\|p(\Mtau)q(\Mtau)\right\|_{L^{\infty}(Q_T)}^{1/2} \left\|p(\Mtau)\na\sqrt{\frac{\utau_i q(\Mtau)}{p(\Mtau)}}\right\|_{L^2(Q_T)} .
\end{align*}
{\em Case 2:} if $\kappa < b-1$, then $p(\Mtau)q(\Mtau)$ is bounded in $L^\frac{b+\kappa-1}{b-\kappa-1}(Q_T)$ due to \eqref{est.mto1} and \eqref{bound.1minusM}.
This leads to
\begin{align*}
& \left\|p^2(\Mtau)\na\left( \frac{\utau_i q(\Mtau)}{p(\Mtau)} \right)\right\|_{L^{1+\frac{\kappa}{b-1}}(Q_T)}\\
&\leq 2\left\|\sqrt{\utau_i p(\Mtau)q(\Mtau)}\right\|_{L^{\frac{2(b+\kappa-1)}{b-\kappa-1}}(Q_T)} \left\|p(\Mtau)\na\sqrt{\frac{\utau_i q(\Mtau)}{p(\Mtau)}}\right\|_{L^2(Q_T)}\\
&\leq 2\left\|p(\Mtau)q(\Mtau)\right\|_{L^{\frac{b+\kappa-1}{b-\kappa-1}}(Q_T)}^{1/2} \left\|p(\Mtau)\na\sqrt{\frac{\utau_i q(\Mtau)}{p(\Mtau)}}\right\|_{L^2(Q_T)} .
\end{align*}
The above estimates and \eqref{bound.flux} allow us to deduce
\begin{equation}
\left\|p^2(\Mtau)\na\left( \frac{\utau_i q(\Mtau)}{p(\Mtau)} \right)\right\|_{L^{1+\rho}(Q_T)}\leq C\quad (i=1,\ldots,n),\quad
\rho = \min\left\{1,\frac{\kappa}{b-1}\right\}.
\label{bound.flux2}
\end{equation}
As a byproduct of the above calculations we also get the following uniform bound:
\begin{equation}\label{bound.upq12}
\begin{cases}
\left\|\sqrt{\utau_i p(\Mtau)q(\Mtau)}\right\|_{L^{\infty}(Q_T)}\leq C, & \kappa\geq b-1,\\[1cm]
\left\|\sqrt{\utau_i p(\Mtau)q(\Mtau)}\right\|_{L^{\frac{2(b+\kappa-1)}{b-\kappa-1}}(Q_T)}\leq C, & \kappa < b-1.
\end{cases}
\end{equation}
Let us now deal with the reaction term. From \eqref{hp.r3}, \eqref{hp.r4} we deduce in particular that $|r_i(\utau)|\leq C (1-\Mtau)^{-\eta}$ with
$\eta<b+\kappa-1$. Therefore \eqref{bound.1minusM} leads to
\begin{align}
\|r_i(\utau)\|_{L^\frac{b+\kappa-1}{\eta}(Q_T)}\leq C.\label{bound.r}
\end{align}
From \eqref{bound.wtau}, \eqref{bound.flux2}, \eqref{bound.r} it follows
\begin{align}\label{est.Du}
&\int_0^T\int_\Omega (D_\tau\utau_i)\phi dx dt\\
\nonumber
&\leq C\left(\|\phi\|_{L^{\frac{1+\rho}{\rho}}(0,T; W^{1,\frac{1+\rho}{\rho}}(\Omega))} 
+ \|\phi\|_{L^\frac{b+\kappa-1}{b+\kappa-1-\eta}(Q_T)} + \tau^{1/2}\|\phi\|_{L^2(0,T; H^m(\Omega))}\right),
\end{align}
for $i=1,\ldots,n$. 
This estimate, together with
the Sobolev embedding $H^m(\Omega)\hookrightarrow L^\infty(\Omega)$ and the trivial relation $L^\infty(\Omega)\hookrightarrow L^\frac{b+\kappa-1}{b+\kappa-1-\eta}(\Omega)$,
implies that $D_\tau\utau_i$ (and also $D_\tau\Mtau$) is uniformly bounded in 
$L^{1+\eps}(0,T; (W^{1,\frac{1+\rho}{\rho}}(\Omega)\cap H^m(\Omega))')$, for $i=1,\ldots,n$
and some $\eps>0$. \medskip\\
{\bf Step 4: Limit $\tau\to 0$.} The uniform bound for $D_\tau\Mtau$ in $L^2(0,T; H^m(\Omega)')$, together with \eqref{bound.naM} and \eqref{bound.u.inf},
allows us to apply \cite[Theorem 3]{CJL14} with $Q(s)=s^{\frac{a+3}{2}}$ and deduce
\begin{align}\label{cnv.M}
\Mtau \to M \quad \mbox{strongly in $L^s(\Omega \times (0,T))$, for all }s<\infty.
\end{align}
The strong convergence of $\Mtau$ and bound \eqref{bound.1minusM} yield via Fatou's lemma that $(1-M)^{1-b-\kappa}\in L^1(Q_T)$, and therefore
$$ M<1\qquad\mbox{a.e. in }Q_T . $$
Moreover, thanks to the uniform bound for $D_\tau\utau_i$ in $L^{1+\eps}(0,T; (W^{1,\frac{1+\rho}{\rho}}(\Omega)\cap H^m(\Omega))')$ 
and estimates \eqref{bound.naM}, \eqref{bound.u.inf}, \eqref{bound.Mnau}, 
we can apply \cite[Lemma 7]{ZaJu17} to deduce that
\begin{align}\label{cnv.Mu}
(\Mtau)^{\frac{a+1}{2}}\utau_i\to M^{\frac{a+1}{2}}u_i \quad \mbox{strongly in $L^s(Q_T)$, for every $s<\infty$, $i=1,\ldots,n$.}
\end{align}
In particular, $(\Mtau)^{\frac{a+1}{2}}\utau_i\to M^{\frac{a+1}{2}}u_i$ a.e.~in $Q_T$, which implies
$$ \utau_i = \frac{(\Mtau)^{\frac{a+1}{2}}\utau_i}{(\Mtau)^{\frac{a+1}{2}}}\to\frac{M^{\frac{a+1}{2}}u_i}{M^{\frac{a+1}{2}}} = u_i
\quad\mbox{a.e.~in }Q_T\cap\{M>0\}. $$
Moreover, since $0\leq\utau_i\leq\Mtau$ for $1\leq i\leq n$, clearly
$$ \utau_i\to 0\quad\mbox{a.e.~in }Q_T\cap\{M=0\}\quad (1\leq i\leq n). $$
However, $u_i=0$ on $Q_T\cap\{M=0\}$. In fact, given any nonnegative $\phi\in L^2(Q_T)$ having support contained in $\{M=0\}$, it holds
$$ 0\leq \int_{Q_T}\utau_i \phi\, dx dt\leq \int_{Q_T}\Mtau\phi\, dx dt\to  \int_{Q_T}M\phi\, dx dt = 0 , $$
implying that the weak limit $u_i$ of $\utau_i$ vanishes on $Q_T\cap\{M=0\}$.
Summarizing up, by dominated convergence,
\begin{align}\label{cnv.u}
\utau_i\to u_i\quad\mbox{strongly in }L^s(Q_T)\quad\forall s<\infty,\quad 1\leq i\leq n.
\end{align}
From \eqref{bound.flux2} it follows that $p(\Mtau)^2\na(\utau_i q(\Mtau)/p(\Mtau))$ is weakly convergent in $L^{1+\rho}(Q_T)$,
where $\rho = \min\{1,\kappa/(b-1)\}$. However,
\begin{equation}\label{sat.id}
p^2(\Mtau)\na\left( \frac{\utau_i q(\Mtau)}{p(\Mtau)} \right)
= 2\sqrt{\utau_i p(\Mtau)q(\Mtau)}\, p(\Mtau)\na\sqrt{\frac{\utau_i q(\Mtau)}{p(\Mtau)}}. 
\end{equation}
Let us consider the first factor on the right-hand side of \eqref{sat.id}, i.e. $\sqrt{\utau_i p(\Mtau)q(\Mtau)}$.
The a.e.~convergence of $\utau_i$, $\Mtau$ and the fact that $M<1$ a.e.~in $Q_T$
imply that $\sqrt{\utau_i p(\Mtau)q(\Mtau)}\to \sqrt{u_i p(M)q(M)}$ a.e.~in $Q_T$.
Bound \eqref{bound.upq12} allows us to conclude that
\begin{equation}\label{cnv.upq}
\sqrt{\utau_i p(\Mtau)q(\Mtau)}\to \sqrt{u_i p(M)q(M)} \qquad\mbox{strongly in $L^2(Q_T)$.} 
\end{equation}
From \eqref{bound.flux} it follows
\begin{equation}\label{cnv.flux.a}
p(\Mtau)\na\sqrt{\frac{\utau_i q(\Mtau)}{p(\Mtau)}}\rightharpoonup \psi_i\quad\mbox{weakly in }L^2(Q_T),\quad i=1,\ldots,n,
\end{equation}
for some function $\psi\in L^2(Q_T;\R^n)$. From \eqref{bound.flux}, \eqref{cnv.upq}, \eqref{cnv.flux.a} we deduce
\begin{align}\label{cnv.flux1}
 p(\Mtau)^2\na\left(\frac{\utau_i q(\Mtau)}{p(\Mtau)}\right) \rightharpoonup 2\sqrt{u_i p(M)q(M)}\,\psi_i\quad\mbox{weakly in }L^{1}(Q_T).
\end{align}
We wish to identify the function $\psi_i$. For an arbitrary $\phi\in C^\infty_c(Q_T)$ let us consider
\begin{align}\label{idlim.1}
&-\int_{Q_T}\phi\, p(\Mtau)\na\sqrt{\frac{\utau_i q(\Mtau)}{p(\Mtau)}} dx dt = J_1^{(\tau)} + J_2^{(\tau)},\\
& J_1^{(\tau)}=\int_{Q_T}\sqrt{\utau_i p(\Mtau)q(\Mtau)}\,\na\phi \, dx dt, \nonumber\\
& J_2^{(\tau)}= \int_{Q_T}\phi p'(\Mtau)\sqrt{\frac{\utau_i q(\Mtau)}{p(\Mtau)}}\,\na\Mtau \, dx dt .\nonumber
\end{align}
From \eqref{cnv.upq} it follows immediately that 
\begin{equation}
J_1^{(\tau)}\to \int_{Q_T}\sqrt{u_i p(M)q(M)}\,\na\phi\, dx dt.\label{cnv.J1}
\end{equation}
Let us now consider $J_2^{(\tau)}$:
\begin{align*}
J_2^{(\tau)} &= \int_{Q_T}\phi p'(\Mtau)\sqrt{\frac{\utau_i q(\Mtau)}{p(\Mtau)}}\,\na\Mtau \, dx dt\\
&= \int_{Q_T}\phi \sqrt{\utau_i} g(\Mtau) \frac{(\Mtau)^{\frac{a-1}{2}}\na\Mtau}{(1-\Mtau)^{\frac{1+b+\kappa}{2}}}dx dt ,\\
g(M) &\equiv M^{-\frac{a-1}{2}}(1-M)^{\frac{1+b+\kappa}{2}}\frac{p'(M)}{p(M)}\sqrt{p(M)q(M)}\qquad 0<M<1.
\end{align*}
From \eqref{asy.q.Mto0} it follows that $g$ in continuous in $[0,1)$. From the a.e. convergence of $\Mtau$ and the fact that
$M<1$ a.e. in $Q_T$ it follows that $g(\Mtau)\to g(M)$ a.e. in $Q_T$. On the other hand,
by exploiting \eqref{Def.p}, \eqref{est.mto1} the function $g$ can be estimated for $M\to 1$ as follows
\begin{align*}
|g(M)| &\leq C \sqrt{q(M)p(M)} \frac{|p'(M)|}{p(M)}(1-M)^{\frac{1+b+\kappa}{2}}\\
&\leq C(1-M)^{\frac{1+\kappa-b}{2}}(1-M)^{-1-\kappa}(1-M)^{\frac{1+b+\kappa}{2}}\\
&\leq C\qquad\mbox{as }M\to 1.
\end{align*}
This means that $g$ is bounded in $[0,1]$. This fact, together with the a.e. convergence of $g(\Mtau)$, implies that
$g(\Mtau)\to g(M)$ strongly in $L^s(Q_T)$ for every $s<\infty$.

Let us now consider the term
$$ \frac{(\Mtau)^{\frac{a-1}{2}}\na\Mtau}{(1-\Mtau)^{\frac{1+b+\kappa}{2}}} = \na\Phi(\Mtau),\qquad
\Phi(M)\equiv\int_0^M\frac{s^{\frac{a-1}{2}}}{(1-s)^{\frac{1+b+\kappa}{2}}}ds . $$
Lemma \ref{bound.nabla} and the entropy inequality \eqref{d.e.i} ensure that $\na\Phi(\Mtau)$ is uniformly bounded in $L^2(Q_T)$.
Moreover we know that $\Phi(\Mtau)\to\Phi(M)$ a.e.~in $Q_T$. Furthermore,
$$ \Phi(M)\leq\int_0^M\frac{ds}{(1-s)^{\frac{1+b+\kappa}{2}}} = \frac{(1-M)^{\frac{1-b-\kappa}{2}} - 1}{(b+\kappa-1)/2} , $$
which, thanks to \eqref{bound.1minusM}, implies that $\Phi(\Mtau)$ is uniformly bounded in $L^{2}(Q_T)$. This means that 
$\Phi(\Mtau)\to\Phi(M)$ strongly in $L^{2-\delta}(Q_T)$ for every $\delta>0$. As a consequence, we get that
$\na\Phi(\Mtau)\rightharpoonup\na\Phi(M)$ weakly in $L^2(Q_T)$. 
The weak convergence of $\na\Phi(\Mtau)$, the strong convergence of $g(\Mtau)$, and the strong convergence of $\utau$ \eqref{cnv.u} allow us to conclude
\begin{equation}
J_2^{(\tau)}\to \int_{Q_T}\phi p'(M)\sqrt{\frac{u_i q(M)}{p(M)}}\,\na M \, dx dt. \label{cnv.J2}
\end{equation}
By putting \eqref{cnv.flux.a}--\eqref{cnv.J2} together we conclude
\begin{align}\label{cnv.flux12}
p(\Mtau)\na\sqrt{\frac{\utau_i q(\Mtau)}{p(\Mtau)}} &\rightharpoonup p(M)\na\sqrt{\frac{u_i q(M)}{p(M)}}
 \quad\mbox{weakly in }L^2(Q_T),\\
 \label{cnv.flux}
 p(\Mtau)^2\na\left(\frac{\utau_i q(\Mtau)}{p(\Mtau)}\right) &\rightharpoonup 
 p(M)^2\na\left(\frac{u_i q(M)}{p(M)}\right) \quad\mbox{weakly in }L^1(Q_T),
\end{align}
for $i=1,\ldots,n$.
Estimate \eqref{est.Du} yields 
\begin{equation}\label{cnv.Du}
D_\tau\utau_i\rightharpoonup\pa_t u_i \quad\mbox{weakly in }L^{1+\eps}(0,T; (W^{1,\frac{1+\rho}{\rho}}(\Omega)\cap H^m(\Omega))'),~~i=1,\ldots,n.
\end{equation}
Let us now study the convergence of the reaction term. From \eqref{bound.r} and the fact that
$(b+\kappa-1)/\eta>1$ (by assumption \eqref{hp.r4}),
if we can prove the a.e.~convergence of $r_i(\utau)$ in $Q_T$ then strong convergence in a suitable space will follow.
However, we know that $\utau\to u$ a.e.~in $Q_T$ and $r$ is continuous in $\{M<1\}$;
therefore $r(\utau)\to r(u)$ a.e.~in $Q_T$.
We conclude that $r(\utau)\to r(u)$ strongly in $L^1(Q_T)$.

Finally, $\tau\wtau\to 0$ strongly in $L^2(0,T; H^m(\Omega))$ thanks to \eqref{bound.wtau}. We conclude that 
we can take the limit $\tau\to 0$ in \eqref{eq.utau} and obtain
\begin{align}\label{eq.final}
& \sum_{i=1}^n\langle\pa_t u_i,\phi_i\rangle
+\sum_{i=1}^n\int_\Omega \left(\alpha_i p^2(M)\na\left( \frac{u_i q(M)}{p(M)} \right)\cdot\na\phi_i - r_i(u)\phi_i\right)dx=0,
\end{align}
for every $\phi = (\phi_1,\ldots,\phi_n)\in C^\infty_c(Q_T;\R^n)$.
However, \eqref{bound.flux2} and \eqref{bound.r} allow us to deduce via a density argument that
\eqref{eq.final} holds for all $\phi\in L^\frac{\rho+1}{\rho}(0,T; W^{1,\frac{\rho+1}{\rho}}(\Omega))\cap L^{\frac{b+\kappa-1}{b+\kappa-1-\eta}}(Q_T)$.\medskip\\
{\bf Step 5: Entropy inequality.}
Testing \eqref{d.e.i} against an arbitrary test function $\phi\in C^1_c(0,T)$ (and performing a ``discrete integration by parts'') leads to
 \begin{align}\label{d.e.i.int}
\int_0^T \int_\Omega h^*(\utau|u_D) D_{-\tau}\phi\, dx dt 
&+ 2\sum_{i=1}^n\alpha_i\int_0^T\int_\Omega p(\Mtau)^2\left|\nabla\sqrt{\frac{\utau_i q(\Mtau)}{p(\Mtau)}}\right|^2\phi dx dt \\
& \leq \int_0^T\left(C_1\int_\Omega h^*(\utau|u_D) dx  + C_2\right)\phi\ dt .\nonumber
\end{align}
From \eqref{cnv.flux12} and the weakly lower semicontinuity of the $L^2$ norm it follows
\begin{align*}
&\liminf_{\tau\to 0}\sum_{i=1}^n\int_0^T\int_\Omega p(\Mtau)^2\left|\nabla\sqrt{\frac{\utau_i q(\Mtau)}{p(\Mtau)}}\right|^2\phi dx dt\\
&\qquad \geq\sum_{i=1}^n\int_0^T\int_\Omega p(M)^2\left|\nabla\sqrt{\frac{u_i q(M)}{p(M)}}\right|^2\phi dx dt 
\end{align*}
On the other hand, \eqref{esti.4} and the definition of $h$ implies (via l'H\^{o}pital's rule) that
$h(u)\leq C(1-M)^{1-\kappa}$. Since \eqref{bound.1minusM} holds, we deduce that $h(\utau)$ is bounded in
$L^{1+\delta}(Q_T)$ for some $\delta>0$. The a.e.~convergence of $\utau$, the fact that $M<1$ a.e. in $Q_T$, and the continuity of $h$
in $\{M<1\}$ implies the a.e. convergence of $h(\utau)$ towards $h(u)$. We conclude that
\begin{equation}
h^*(\utau\vert u_D)\to h^*(u\vert u_D)\quad\mbox{strongly in }L^{1+\delta/2}(Q_T).\label{cnv.h}
\end{equation}
Since $D_{-\tau}\phi = -\tau^{-1}(\phi(\cdot + \tau)-\phi)\to -\pa_t\phi$ strongly in $L^s(0,T)$ for all $s<\infty$,
from \eqref{d.e.i.int}--\eqref{cnv.h} we get
\begin{align}\label{d.e.i.int.2}
&-\int_0^T \int_\Omega h^*(u|u_D) \pa_t\phi\, dx dt 
+ 2\alpha_i\sum_{i=1}^n\int_0^T\int_\Omega p(M)^2\left|\nabla\sqrt{\frac{u_i q(M)}{p(M)}}\right|^2\phi dx dt \\
&\qquad \leq \int_0^T\left(C_1\int_\Omega h^*(u|u_D) dx  + C_2\right)\phi\ dt ,
\quad\forall\phi\in C^1_c(0,T): ~\phi\geq 0.\nonumber
\end{align}
Therefore \eqref{entr.inequ.2} holds. This finishes the proof of the existence Theorem.
$\hfill \Box$
\begin{remark}\label{Rem.Dir}
\textnormal{Theorem \ref{Thm.Existence} can be proved also in the case of 
nonconstant, $x-$dependent Dirichlet boundary data $u_{D,i}=u_{D,i}(x)$ ($i=1,\ldots,n$).
We can assume for the sake of simplicity that $u_{D,i}\in W^{1,\infty}(\Omega)$, $i=1,\ldots,n$, and $\sup_{\Omega}M_D < 1$.
The only relevant difference with the case $u_D=$constant lies in the proof of Lemma \ref{lem.dei} (i.e.~the discrete entropy inequality).
In fact, the following additional term appears on the right-hand side of \eqref{d.e.i.1}:
\begin{align*}
\Xi = \sum_{i=1}^n \alpha_i\int_0^T\int_\Omega\na\log\left( \frac{u_{D,i}q(M_D)}{p(M_D)} \right)\cdot p^2(\Mtau)\na\left( \frac{\utau_i q(\Mtau)}{p(\Mtau)} \right)dxdt .
\end{align*}
This term can be estimated as follows:
\begin{align*}
\Xi &= 2\sum_{i=1}^n \alpha_i\int_0^T\int_\Omega\na\log\left( \frac{u_{D,i}q(M_D)}{p(M_D)} \right)\cdot \sqrt{\utau_i p(\Mtau)q(\Mtau)}\, p(\Mtau)\na\sqrt{\frac{\utau_i q(\Mtau)}{p(\Mtau)} }dxdt\\
&\leq 2\sum_{i=1}^n \alpha_i\left\| \na\log\left( \frac{u_{D,i}q(M_D)}{p(M_D)} \right) \right\|_{L^\infty(Q_T)}
\left\| \sqrt{\utau_i p(\Mtau)q(\Mtau)} \right\|_{L^2(Q_T)}\\
&\qquad \times \left\|p(\Mtau)\na\sqrt{\frac{\utau_i q(\Mtau)}{p(\Mtau)} } \right\|_{L^2(Q_T)}\\
&\leq \frac{C}{\eps}\left\| \sqrt{\utau_i p(\Mtau)q(\Mtau)} \right\|_{L^2(Q_T)}^2 
+ C\eps \left\|p(\Mtau)\na\sqrt{\frac{\utau_i q(\Mtau)}{p(\Mtau)} } \right\|_{L^2(Q_T)}^2 ,
\end{align*}
for some $\eps>0$. However, thanks to \eqref{est.mto1}, it holds
\begin{align*}
\left\| \sqrt{\utau_i p(\Mtau)q(\Mtau)} \right\|_{L^2(Q_T)}^2 &\leq C \left\| (1-\Mtau)^{(1+\kappa-b)/2} \right\|_{L^2(Q_T)}^2\\
&= C \int_0^T\int_\Omega (1-\Mtau)^{1+\kappa-b} dx dt  .
\end{align*}
We wish to estimate the last integral in the above inequality. It holds
\begin{align*}
 &\int_0^T\int_\Omega (1-\Mtau)^{1+\kappa-b} dx dt\\ 
 & \qquad  =\iint_{\{1-\Mtau\geq\eps^{\frac{1}{\kappa}}\}} (1-\Mtau)^{1+\kappa-b} dx dt
 +\iint_{\{1-\Mtau < \eps^{\frac{1}{\kappa}}\}} (1-\Mtau)^{1+\kappa-b} dx dt\\
 &\qquad\leq C(\eps) + \eps^2\iint_{\{1-\Mtau < \eps^{\frac{1}{\kappa}}\}} (1-\Mtau)^{1-\kappa-b} dx dt\\
 &\qquad\leq C(\eps) + \eps^2\int_0^T\int_\Omega (1-\Mtau)^{1-\kappa-b} dx dt .
\end{align*}
Thus, we deduce that
\begin{align*}
\Xi & \leq C(\eps) + C\eps\int_0^T\int_\Omega (1-\Mtau)^{1-\kappa-b} dx dt + 
C\eps \left\|p(\Mtau)\na\sqrt{\frac{\utau_i q(\Mtau)}{p(\Mtau)} } \right\|_{L^2(Q_T)}^2.
\end{align*}
Lemma \ref{bound.nabla} allows us to bound $\Xi$ by means of the entropy dissipation by choosing $\eps>0$ small enough,
thereby yielding Lemma \ref{lem.dei}. The rest of the proof is analogue to the case $u_D = $constant.}
\end{remark}

\begin{remark}\label{remark.neu.bc}
\textnormal{The existence proof works also in the case \eqref{cond.b.0} is replaced by homogeneous Neumann boundary conditions on the whole $\pa\Omega$. 
In this case the relative entropy \eqref{H} cannot be employed (as $u_D$ is
obviously not defined), and the absolute entropy $H[u]=\int_\Omega h(u)dx$ can be used instead. However, the assumptions on the reaction term need to be modified,
as the total mass 
\begin{align}\label{total.mass}
\mathcal{M}(t) = |\Omega|^{-1}\int_\Omega M(x,t)dx 
\end{align}
needs to remain strictly smaller then 1 for any finite time
in order to prevent blowup. A sufficient condition for this reads as
\begin{align}\label{hp.r.Neumann}
\exists C\in\R : \qquad \sum_{i=1}^n r_i(u)\leq C(1-M)\qquad \forall u\in\cD.
\end{align}
By integrating \eqref{cross.diff.sys.00} in $\Omega$, summing from $i=1,\ldots,n$, and exploiting \eqref{hp.r.Neumann}, one can easily show that
$$1-\mathcal{M}(t)\geq e^{-\lambda t}(1-\mathcal{M}(0)) \qquad t>0, \qquad\lambda \equiv \max\{C,0\}, $$
where $\mathcal{M}(t)$ is the total mass defined in \eqref{total.mass}.
This control on the total mass $\mathcal{M}(t)$ allows us to apply Lemma \ref{lem.Poi.sing} with $M=\Mtau$ and obtain
bound \eqref{bound.1minusM} for the singularity, which is the only delicate point in the existence proof for the case of homogeneous Neumann boundary conditions; 
the rest of the argument works in a completely analogue way to the case of mixed Dirichlet-Neumann boundary conditions.}
\end{remark}

\section{Proof of Theorem~\ref{Thm.LTB}}\label{Sec.LTB}

From the definition \eqref{H}, \eqref{Entropy} of the relative entropy density it follows
\begin{align*}
   h^*(u|u_D)  
  & = \sum_{i=1}^n \big( u_i  \log \frac{u_i}{u_{D,i}} + u_{D,i} - u_i \big) 
   +  \int_{M_D}^{M} \log \Big(  \frac{q(s)/p(s)}{q(M_D)/p(M_D)} \Big) ds.
\end{align*}

Now we split the above written relative entropy density 
 in  two parts,
\[ h^*(u|u_D) = \sum_{i=1}^n h_1^*(u_i|u_{D,i}) + h_2^*(M|M_D), \]  where
\begin{align*}
h_1^*(u|u_D) & = u \log \frac{u}{u_{D}}  + u_{D} - u,\\
h_2^*(M|M_D) & = \int_{M_D}^{M} \log \Big(  \frac{q(s)/p(s)}{q(M_D)/p(M_D)} \Big) ds.\\
\end{align*}
The entropy inequality \eqref{entr.inequ.2} and Lemma \ref{bound.nabla} yield, since $\lambda_r=C_r=0$, 
 \begin{align}
  \frac{d}{dt} \int_\Omega h^*(u|u_D) dx  + C_1 \sum_{i=1}^n \int_\Omega p(M) q(M) & |\nabla \sqrt{u_i}|^2 dx
   \nonumber \\
   & + C_2 \int_\Omega \frac{M^{a-1}}{(1-M)^{1+b+\kappa}}|\nabla M|^2 dx \leq 0.
 \label{LTB: RelEntropy}
 \end{align}
We want to  estimate the integral
\begin{align*}
 \int_\Omega p(M) q(M) |\nabla \sqrt{u_i}|^2 dx 
\end{align*}
from below using the term $\int_\Omega h_1^*(u_i|u_{D,i}) dx$.
We start by observing that, thanks to \eqref{est.mto1} and \eqref{asy.q.Mto0}, it holds
\[ p(M) q(M) \geq C M^a (1-M)^{1+\kappa-b}. \]
It follows
\begin{align}
  \int_\Omega p(M) q(M) |\nabla \sqrt{u_i}|^2 dx 
    & \geq C \int_\Omega M^a (1-M)^{1+\kappa - b} |\nabla \sqrt{u_i}|^2 dx \nonumber \\
    & \geq C \int_\Omega u_i^a (1-M)^{1+\kappa - b} |\nabla \sqrt{u_i}|^2 dx \nonumber \\
    & \geq C \int_\Omega u_i^{a-1} (1-M)^{1+\kappa - b} |\nabla u_i|^2 dx \nonumber \\
    & = C \int_\Omega  (1-M)^{1+\kappa - b} | \nabla  \big( u_i^{\frac{a+1}{2}} \big)|^2 dx.
\end{align}
On the other side, we notice that the term
 $\int_\Omega |\nabla \big( u_i^{\frac{a+1}{2}} \big) |dx   $
can be estimated through the Cauchy-Schwarz inequality as follows
\begin{align}
 \int_\Omega |\nabla \big( u_i^{\frac{a+1}{2}} \big) |dx  & = \int_\Omega 
  \frac{(1-M)^{(1+\kappa-b)/2}|\nabla u_i^{(a+1)/2}|}{(1-M)^{(1+\kappa-b)/2}} dx \nonumber \\
 & \leq \Big( \int_\Omega (1-M)^{-1-\kappa + b} dx \Big)^{1/2} \Big( \int_\Omega (1-M)^{1+\kappa-b} |\nabla u_i^{(a+1)/2}|^2 dx   \Big)^{1/2}.
 \label{LTB:Eq.2}
\end{align}
The first integral in the last row of \eqref{LTB:Eq.2} can be controlled by means of the $L^\infty(L^1)$ bound on $(1-M)^{1-\kappa}$:
\begin{align*}
\int_\Omega (1-M)^{-1-\kappa+b} dx \leq \int_\Omega (1-M)^{1-\kappa} dx \leq C,
\end{align*}
where we used the fact that $-1-\kappa+b \geq 1-\kappa$ which holds true for $b\geq 2$.
In this way we get
\begin{align}\label{LTB:Eq.2b}
  \int_\Omega p(M) q(M) |\nabla \sqrt{u_i}|^2 dx  \geq C \Big(  \int_\Omega |\nabla u_i^{(a+1)/2} | dx \Big)^2 .
\end{align}
Finally, by using the Poincar\'e inequality we get
\begin{align}
   \int_\Omega p(M) q(M) |\nabla \sqrt{u_i}|^2 dx  \geq C \Big( \int_\Omega |u_i^{(a+1)/2} - (u_{D,i})^{(a+1)/2}| \,dx   \Big)^2.
  \label{LTB:Eq.3}
\end{align}
Next step is to bound term $|u_i^{(a+1)/2} - (u_{D,i})^{(a+1)/2}|$ from below by $h_1^*(u_i|u_{D,i})$.
For $u_i\leq u_{D,i}/2$ it holds
\begin{align*}
|u_i^{(a+1)/2} - (u_{D,i})^{(a+1)/2}| = (u_{D,i})^{(a+1)/2} - u_i^{(a+1)/2} \geq C\geq C |u_i - u_{D,i}| .
\end{align*}
Moreover,
\begin{align*}
h_1^*(u_i|u_{D,i}) & = u_i \log \frac{u_i}{u_{D,i}}  + u_{D,i} - u_i\leq u_{D,i} - u_i\leq |u_i - u_{D,i}| .
\end{align*}
On the other hand, for $u_i\geq u_{D,i}/2$, 
by the mean-value theorem and the Taylor's formula,
there exist $\xi_i^{(1)}, \xi_i^{(2)}$ intermediate between $u_i$ and $u_{D,i}$ such that, 
\begin{align*}
|u_i^{(a+1)/2} - (u_{D,i})^{(a+1)/2}| &= \frac{a+1}{2}\left(\xi_i^{(1)}\right)^{(a-1)/2}|u_i - u_{D,i}|\geq C |u_i - u_{D,i}| ,\\
h_1^*(u_i|u_{D,i}) & = \frac{1}{2\xi_i^{(2)}}(u_i - u_{D,i})^2\leq C |u_i - u_{D,i}|.
\end{align*}
So in both cases
\begin{equation}
|u_i^{(a+1)/2} - (u_{D,i})^{(a+1)/2}|\geq C|u_i - u_{D,i}|\geq C h_1^*(u_i|u_{D,i})
\quad (i=1,\ldots,n).\label{est.ua.h}
\end{equation}
From \eqref{LTB:Eq.3}, \eqref{est.ua.h} we deduce
\begin{align}
 \int_\Omega p(M) q(M) |\nabla \sqrt{u_i}|^2 dx \geq C \Big( \int_\Omega h_1^*(u|u_D) dx \Big)^2
\label{LTB:Eq.5}
\end{align}
Next step is to estimate the third integral in \eqref{LTB: RelEntropy} from below by the integral 
\[ \int_\Omega h_2^*(M|M_D) dx. \] 
For that purpose we define
\begin{align}
 \Phi(M) := \int_{M_D}^M \frac{s^{(a-1)/2}}{(1-s)^{(1+b+\kappa)/2}} ds,\quad 0 \leq M < 1.
\label{LTB:def.Phi}
\end{align}
Now, one gets 
\begin{align}
 \int_\Omega \frac{M^{a-1}}{(1-M)^{1+b+\kappa}} |\nabla M|^2 dx = \int_\Omega |\nabla \Phi(M)|^2 dx \geq C \int_\Omega |\Phi(M)|^2 dx,
\label{LTB:Eq.6}
\end{align}
where we used the Poincar\'e inequality in order to make the last estimate.

Further, we claim that
\begin{align}
  h_2^*(M|M_D) \leq C \Phi(M)^2
  \label{LTB:Eq.7.0}
\end{align}
i.e.
\begin{align}
 \int_{M_D}^M \log\Big(   \frac{q(s)/p(s)}{q(M_D)/p(M_D)}\Big) ds \leq C \Big( \int_{M_D}^M \frac{s^{(a-1)/2}}{(1-s)^{(1+b+\kappa)/2}}\,ds\Big)^2.
\label{LTB:Eq.7}
\end{align}
For checking the claim \eqref{LTB:Eq.7.0} we need to show that the function
\begin{align}
 F(M) := \frac{h_2^*(M|M_D) }{\Phi(M)^2}
\label{LTB:Eq.7.1}
\end{align}
is bounded.
Note that for $0 \leq M < 1$ and $M \neq M_D$ it is clear that $F \in C\big( [0,1) \setminus \{M_D\} \big)$.
It remains to check the behavior of function $F$ near $M=1$ and $M=M_D$.
By using the estimate $s^{(a-1)/2} \geq M_D^{(a-1)/2}$
inside the integral defining $\Phi$ and noticing that $\Phi'(M_D)\neq 0$ we deduce
$$ \Phi(M)\sim |M-M_D|\quad (M\to M_D),\qquad \Phi(M)\sim (1-M)^{\frac{1-b-\kappa}{2}}\quad (M\to 1). $$
These relations, estimate \eqref{esti.4} and the fact that $h_2^*(M_D|M_D)=\frac{\pa}{\pa M}h_2^*(M|M_D)\vert_{M=M_D}=0$ allow us to obtain
\begin{align*}
 F(M) &\sim \frac{(1-M)^{1-\kappa}}{(1-M)^{1-\kappa-b}} \sim (1-M)^b \to 0\quad (M\to 1),
\end{align*}
and that $F$ is bounded as $M\to M_D$.
It follows that function $F$ is bounded which proves our claim \eqref{LTB:Eq.7.0}.

In this way we get
\begin{align}
 \int_\Omega \frac{M^{a-1}}{(1-M)^{1+b+\kappa}} |\nabla M|^2 dx \geq C \int_\Omega h_2^*(M|M_D) dx.
\label{LTB:Eq.8}
\end{align}
On the other side, one has
\begin{align}
 \int_\Omega  h_2^*(M|M_D) dx \leq \int_\Omega h^*(u|u_D) dx \leq \int_\Omega h^*(u_0|u_D) dx = C > 0,
 \label{LTB:Eq.9}
\end{align}
so \eqref{LTB:Eq.8} implies a fortiori that
\begin{align}
 \int_\Omega \frac{M^{a-1}}{(1-M)^{1+b+\kappa}} |\nabla M|^2 dx 
  \geq C \Big( \int_\Omega h_2^*(M|M_D)  dx \Big)^2.
\label{LTB:Eq.10}
\end{align}
 Finally, we collect estimates \eqref{LTB:Eq.5} and \eqref{LTB:Eq.10} and we combine them with the entropy inequality \eqref{LTB: RelEntropy}. We get
 \begin{align}
 \frac{d}{dt}\int_\Omega h^*(u|u_D) dx + C \Big( \int_\Omega h^*(u|u_D) dx \big)^2 \leq 0.
 \label{LTB:Eq.11}
 \end{align}
 Let us denote
 \[ w(t) = \int_\Omega h^*(u|u_D) dx. \]
 Now, equation \eqref{LTB:Eq.11} can be written as
 \begin{align}
  \frac{d}{dt} w(t) + C w(t)^2 \leq 0.
  \label{LTB:Eq.12}
 \end{align}
 By integrating \eqref{LTB:Eq.12} with respect to time, from $0$ to $t$ ($t>0$), one gets
 \[ \frac{1}{H_0} - \frac{1}{w(t)} \leq -Ct, \]
 where $H_0 = \int_\Omega h^*
 (u_0|u_D) dx$. 
 Now, direct calculations give
 \begin{align}
  \int_\Omega h^*(u|u_D) dx  \leq \frac{H_0}{1+t C H_0}.
 \label{LTB:Eq.13}
 \end{align}
 Since $h^*(u|u_D)\geq \sum_{i=1}^n h^*_1(u_i|u_{D,i})$ and the Hessian of $u\mapsto h^*_1(u|u_D)$ is uniformly positive definite for $0<u\leq 1$, by Taylor-expanding $h_1^*(u_i|u_{D,i})$ around $u_{D,i}$ we conclude
\begin{align*}
\int_\Omega h^*(u|u_D) dx\geq \sum_{i=1}^n\int_\Omega h^*_1(u_i|u_{D,i}) dx\geq C \sum_{i=1}^n\int_\Omega |u_i-u_{D,i}|^2 dx .
\end{align*}
This finishes the proof of Theorem~\ref{Thm.LTB}. $\hfill \Box$
  
\begin{remark}\label{Remark.Neumann.LTB}
\textnormal{A similar (albeit weaker) result on the long-time behavior of the solutions to \eqref{cross.diff.sys.1} holds in the case of homogeneous Neumann boundary conditions
with vanishing reactions.
Indeed, one can show that, if $a\leq 1$, $\kappa<1$, $b\geq 2$, and $r(u)\equiv 0$, then $C>0$, $\theta\in (0,1)$ exist such that
$$ \|u-\langle u\rangle\|_{L^2(\Omega)}^2\leq \frac{C}{(1+t)^\theta}\qquad t>0, $$
where $\langle u\rangle = |\Omega|^{-1}\int_\Omega u dx = |\Omega|^{-1}\int_\Omega u_0 dx$ is the steady state of the system.
The assumption $r=0$ is made for the sake of simplicity; as a matter of fact, the result could be generalized to the case of nonzero reaction terms
with zero space average and suitable dissipative properties (like e.g. $r(u)\cdot \frac{\pa}{\pa u}h(u|\langle u\rangle)\leq 0$). However, such a case
seems quite artificial.}

\textnormal{The main differences with the case of mixed Dirichlet-Neumann boundary conditions appear in two points. 
The first one is the proof that the right-hand side of \eqref{LTB:Eq.2b} dominates $\int_\Omega h_1^*(u|\langle u \rangle)dx$.
In the case of homogeneous Neumann boundary conditions, Poincar\'e inequality yields (thanks to the assumption $a\leq 1$):
\footnote{We point out that, if $a>1$, the term $\langle u_i^{(a+1)/2}\rangle$ would be present in the inequality in place of $\langle u_i\rangle$; it is not clear how 
the argument would work in this case.}
$$ \int_\Omega |\na u_i^{(a+1)/2}|dx \geq c\int_\Omega |\na u_i|dx\geq c'\int_\Omega |u_i - \langle u_i\rangle|dx . $$
The above inequality, together with \eqref{LTB:Eq.2b}, \eqref{est.ua.h}, implies \eqref{LTB:Eq.5} with $u_D$ replaced by $\langle u\rangle$.}

\textnormal{The other delicate point in the proof is to relate the second integral on the right-hand side of \eqref{LTB: RelEntropy} with $\int_\Omega h_2^*(M|\langle M \rangle)dx$.
We start by noticing that the relation $a\leq 1$ and Poincar\'e inequality yield
\begin{equation}\label{LBThomNeu.1}
\int_\Omega \frac{M^{a-1}}{(1-M)^{1+b+\kappa}}|\nabla M|^2 dx\geq \int_\Omega |\nabla M|^2 dx \geq 
C\int_\Omega |M - \langle M\rangle|^2 dx. 
\end{equation}
Furthermore, since $\kappa<1$, then (thanks to \eqref{esti.4}) $s\mapsto\log(q(s)/p(s))$ is integrable in $[0,1]$. 
A straightforward consequence of this fact is the property
$$ h_2^*(M|\langle M\rangle)\leq C|M-\langle M\rangle|^{1-\kappa}, $$
for some constant $C>0$. By Jensen's inequality it follows
\begin{equation}\label{LBThomNeu.2}
\left(\int_\Omega h_2^*(M|\langle M\rangle)dx\right)^{\frac{2}{1-\kappa}}
\leq C\left(\int_\Omega |M-\langle M\rangle|^{1-\kappa} dx\right)^{\frac{2}{1-\kappa}}
\leq C\|M-\langle M\rangle\|_{L^2(\Omega)}^{2} . 
\end{equation}
Putting \eqref{LBThomNeu.1}, \eqref{LBThomNeu.2} together yields
$$ \int_\Omega \frac{M^{a-1}}{(1-M)^{1+b+\kappa}}|\nabla M|^2 dx\geq C\left( \int_\Omega h_2^*(M|\langle M\rangle)dx \right)^{\frac{2}{1-\kappa}}. $$
The rest of the proof is completely analogue to the case of mixed Dirichlet-Neumann boundary conditions.}
\end{remark}

  \section{Proof of Theorem~\ref{Thm.Uniqueness}}\label{Sec.Uniq}
 
  The uniqueness proof is organized in two parts. First, using the $H^{-1}$-method, we prove the uniqueness of the total mass $M$. Consequently, in order to show  the uniqueness of the solution $u=(u_1,\ldots,u_n))$ we apply the $E$-monotonicity technique of Gajewski \cite{Gaj94}.
By summing equations \eqref{cross.diff.sys.1}, 
taking into account the assumption  that $\alpha_i = 1$, and denoting $\di f(M) =  M q(M)/p(M)$, one gets
\begin{align}
\partial_t M = \Div\!\left(  p^2(M) \nabla f(M) \right) + \sum_{i=1}^n r_i(u).
\label{eq.sum}
\end{align}
Direct calculation gives
\begin{align*}
 p^2(M) \nabla f(M) = 
 \Big( p(M)q(M) + M p(M)q'(M) - Mq(M)p'(M) \Big) \nabla M.
\end{align*}
Let us define the function $Q$ as follows:
\begin{align}
 Q(M) = \int_0^M \big( p(s)q(s) + p(s)q'(s) s - p'(s) q(s) s \big)ds.
\label{def.Q}
\end{align}
In this way, equation \eqref{eq.sum} can be written as
\begin{align}
 \partial_t M = \Delta Q(M) + \sum_{i=1}^n r_i(u).
\label{eq.sum.1}
\end{align}
Let us assume that equation \eqref{eq.sum.1} has two different solutions $M_1$ and $M_2$.
By substracting equations, one gets
\begin{align}
 \partial_t (M_1-M_2) = \Delta\big( Q(M_1) - Q(M_2) \big) + \sum_{i=1}^n \big( r_i(u_1) - r_i(u_2) \big).
 \label{eq.sum.2}
\end{align}
We test equation \eqref{eq.sum.2} using the test-function $\varphi(t)$ which we choose in a special way.
More precisely, let $\varphi(t)  \in H^1(\Omega)$ be the unique solution to the following initial-boundary value problem:
\begin{equation}
\begin{split}
 -\Delta \varphi(t) & = (M_1 - M_2)(t) \; \textrm{ in }\; \Omega,\\
 \varphi & = 0\; \textrm{ on }\; \Gamma_D,\\
 \frac{\partial \varphi}{\partial \nu} & = 0 \; \textrm{ on }\; \Gamma_N,\\
 \varphi(0) & = \varphi_0 \; \textrm{ in }\; \Omega.
\label{test.1}
\end{split}
\end{equation}
By exploiting \eqref{eq.sum.2} and the fact that $\na\varphi(0)=0$ we deduce
\begin{align*}
 &\frac{1}{2}\|\na\varphi(t)\|_{L^2(\Omega)}^2 =  \int_0^t\frac{1}{2} \frac{d}{dt} \| \nabla \varphi(s)\|_{L^2(\Omega)}^2 ds 
  = -\int_0^t \langle \partial_t \Delta \varphi, \varphi \rangle ds 
  = \int_0^t\langle  \partial_t(M_1 - M_2) , \varphi \rangle ds\\
  &= \int_0^t\langle \Delta\big( Q(M_1) - Q(M_2) \big), \varphi \rangle ds + \int_0^t\langle \sum_{i=1}^n (r_i(u_1) - r_i(u_2), \varphi \rangle ds .
\end{align*}
By integrating by parts and applying \eqref{test.1} we get
\begin{align}
 \frac{1}{2} \|\nabla \varphi(t)\|_{L^2(\Omega)}^2 
 & = - \int_0^t \int_\Omega \big(  Q(M_1) - Q(M_2) \big)  (M_1 - M_2) dxds \nonumber \\
 & \qquad+ \int_0^t \int_\Omega \sum_{i=1}^n \big( r_i(u_1) - r_i(u_2) \big) \varphi dxds.
  \label{Uniq:Eq.4.1}
  \end{align}
  Let us first consider the second integral on the right-hand side of \eqref{Uniq:Eq.4.1}.
Using the assumption \eqref{assum.react.1} we have 
\begin{align}\label{Uniq:Eq.4.2}
&\int_0^t \int_\Omega \sum_{i=1}^n \big( r_i(u_1) - r_i(u_2) \big) \varphi dxds =  C J_1 + J_2 ,  \\
& J_1 = \int_0^t \int_\Omega (M_1 - M_2) \varphi dx ds ,\quad 
 J_2 = \int_0^t \int_\Omega \big( R(M_1) - R(M_2)\big) \varphi dx ds .\nonumber
\end{align}
  We calculate:
  \begin{align*}
  J_1 = -  \int_0^t \int_\Omega (\Delta \varphi)\, \varphi dx ds =  \int_0^t \int_\Omega |\nabla \varphi|^2 dx ds,
  \end{align*}
  and using the mean-value theorem and assumption \eqref{assum.react.2} we get
  \begin{align*}
  J_2  \leq  \int_0^t \int_\Omega |R'(\overline{M})| |M_1 - M_2| |\varphi| dx ds  
   \leq C  \int_0^t \int_\Omega \overline{M}^{a/2} |M_1 - M_2| |\varphi| dx ds,
  \end{align*}
  where $\overline{M} = \Theta M_1 + (1-\Theta)M_2$, for some $\Theta \in [0,1]$.
  Next, Young inequality gives
   \begin{align}
  J_2 \leq C(\eps)  \int_0^t \int_\Omega \varphi^2(s) dx ds + \eps  \int_0^t \int_\Omega  \overline{M}^a (M_1 - M_2)^2 dx ds.
  \label{Uniq:Eq.4.3}
  \end{align}
Further on, using the Poincar\'e inequality in the first integral of \eqref{Uniq:Eq.4.3} and the estimate
$\overline{M} \leq \max\{ M_1, M_2 \}$, one has
 \begin{align}
  J_2 & \leq  C(\eps) \int_0^t \int_\Omega |\nabla \varphi|^2 dx ds + \eps \int_0^t \int_\Omega \big(\max\{ M_1, M_2 \} \big)^a (M_1 - M_2)^2 dx ds.
  \label{Uniq:Eq.4.4}
  \end{align}
  At this point we go back to the equation \eqref{Uniq:Eq.4.1} where we consider the first integral on the right-hand side. 
  We claim that there exists a constant $D > 0$ such that
  \begin{align}
   D\big(  Q(M_1) - Q(M_2) \big)  (M_1 - M_2) \geq \big(\max\{ M_1, M_2 \} \big)^a (M_1 - M_2)^2.
  \label{Uniq:Eq.4.5}
  \end{align}
  In order to show \eqref{Uniq:Eq.4.5}, first we note that from \eqref{Def.q}, \eqref{def.Q} follows that
  \begin{align*}
  Q'(M) = p^2(M) \Big( \frac{M q(M)}{p(M)} \Big)' = \frac{M^a}{(1-M)^b}.
  \end{align*}
  Now, after integrating the previous expression from $0$ to $M$ we get
  \begin{align}
  Q(M) = \int_0^M \frac{s^a}{(1-s)^b} ds + Q(0).
  \label{Uniq:Eq.4.6}
  \end{align}
  Using \eqref{Uniq:Eq.4.6}, we calculate the term on the left-hand side of \eqref{Uniq:Eq.4.5}. In this way we have
  \begin{align}
  \big(  Q(M_1) - Q(M_2) \big) & (M_1 - M_2) = (M_1 - M_2) \int_{M_2}^{M_1} \frac{s^a}{(1-s)^b} ds \nonumber \\
   & \geq (M_1 - M_2) \int_{M_2}^{M_1} s^a ds 
    = \frac{(M_1 - M_2)^2}{a+1} M_1^a \frac{1 - (M_2/M_1)^{a+1}}{1 - (M_2/M_1)}.
   \label{Uniq:Eq.4.7}
  \end{align}
  A straightforward computation yields
  $$   \frac{1-x^{a+1}}{1-x} \geq C \max \{1,x^a\} = C \big(  \max \{ 1,x\} \big)^a , \qquad x \geq 0,\quad x \neq 1,   $$
  therefore \eqref{Uniq:Eq.4.7} leads to  
  \begin{align}
   \big(  Q(M_1) - Q(M_2) \big) & (M_1 - M_2)  \geq C(M_1 - M_2)^2 M_1^a \Big(  \max \Big\{1,\frac{M_2}{M_1}\Big\} \Big)^a \nonumber\\
   & = C(M_1 - M_2)^2 \big( \max\{ M_1, M_2 \} \big)^a,
    \label{Uniq:Eq.4.8}
  \end{align}
  proving in this way the claim \eqref{Uniq:Eq.4.5}.
Inserting estimate \eqref{Uniq:Eq.4.8} in \eqref{Uniq:Eq.4.4}, we get
\begin{align}
 J_2 \leq C(\eps) \int_0^t \int_\Omega & |\nabla \varphi|^2 dx ds \nonumber \\
   & + \eps D \int_0^t\int_\Omega   \big(  Q(M_1) - Q(M_2) \big) (M_1 - M_2) dx ds.
\label{Uniq:Eq.4.9}
\end{align}
 Now we go back to \eqref{Uniq:Eq.4.1}. Using the estimate \eqref{Uniq:Eq.4.9} in \eqref{Uniq:Eq.4.2} 
 we get
 \begin{align}
  \frac{1}{2} \|\nabla \varphi(t)\|_{L^2(\Omega)}^2 
  & \leq (\eps D - 1) \int_0^t \int_\Omega \big(  Q(M_1) - Q(M_2) \big)     (M_1 - M_2) dxds  \nonumber \\
  & +C(\eps) \int_0^t \int_\Omega  |\nabla \varphi|^2 dx ds .
 \label{Uniq:Eq.4.10}
 \end{align}
Let us take $\eps = 1/D$ in \eqref{Uniq:Eq.4.10}. By applying Gronwall's inequality 
and using the fact that $\nabla \varphi(0) = 0$, one gets $ \nabla \varphi(t) = 0$ for all $t>0$.
Finally, from $M_1(t) - M_2(t) = -\Delta \varphi(t) = 0$ it follows directly that $\forall t \in [0,T]$ one has $M_1(t) = M_2(t)$. In this way  the uniqueness of the total mass $M$ is proven. 

 In the second part of the proof, in order to prove the uniqueness of solution, we will follow the approach from \cite{ZaJu17} where the $E$-monotonicity technique of Gajewski \cite{Gaj94} has been applied.
This method is based on the convexity of the logarithmic entropy. For this purpose let us define the distance
\begin{align}
 \db(u,v)  = \sum_{i=1}^n \int_\Omega \Big( \xi(u_i) + \xi(v_i) - 2\xi\big( \frac{u_i + v_i}{2}  \big)  \Big) dx, \label{dist.1}
 \end{align}
 where $ \xi(s) = s \log s$. Notice that $\db(u,v) \geq 0$ due to the convexity of the function $\xi$.

Since $u_i$ and $v_i$ are only nonnegative and expressions like $\log u_i$, $\log v_i$ or $\log((u_i+v_i)/2$ may be undefined, we need the regularization of distance given by \eqref{dist.1}.
For that purpose let $0<\eps<1$. We introduce the regularized distance
\begin{align}
 \db_\eps(u,v)  = \sum_{i=1}^n \int_\Omega \Big( \xi_\eps(u_i) + \xi_\eps(v_i) - 2\xi_\eps\big( \frac{u_i + v_i}{2}  \big)  \Big) dx, \label{dist.eps.1}
 \end{align}
 where $\xi_\eps(s) = (s+\eps)  \log (s + \eps)$. 

Next, we observe that $\db_\eps(u(0),v(0)) = 0$ as $u$ and $v$ have the same initial data.
Using equation \eqref{cross.diff.sys.00} written  in the form
\begin{align}
 \partial_t u_i = \Div \Big(  pq \nabla u_i + u_i p^2 \nabla \big(  \frac{q}{p}\big) \Big) + r_i(u),
\label{cross.diff.sys.f1}
\end{align}
we compute
\begin{align*}
& \frac{d}{dt} \db_\eps(u,v)  =  \sum_{i=1}^n \Big( I_{1,i} +  I_{2,i} - I_{3,i} \Big),\quad I_{1,i} = \langle \partial_t u_i, \log(u_i + \eps) \rangle,\\
& I_{2,i} = \langle \partial_t v_i, \log(v_i + \eps) \rangle , \quad 
I_{3,i} = \Big\langle \partial_t(u_i+v_i), \log\Big( \frac{u_i+v_i}{2} +\eps \Big)  \Big\rangle .
\end{align*}
Taking into account the equation \eqref{cross.diff.sys.f1} and  performing 
partial integration gives
\begin{align*}
 I_{1,i} & = -  \int_\Omega  \frac{\nabla u_i}{u_i+\eps} \Big(  pq \nabla u_i + u_i p^2 \nabla \big(  \frac{q}{p}\big) \Big) dx + \int_\Omega r_i(u) \log(u_i + \eps) dx, \\
 I_{2,i} & = -  \int_\Omega  \frac{\nabla v_i}{v_i+\eps} \Big(  pq \nabla v_i + v_i p^2 \nabla \big(  \frac{q}{p}\big) \Big) dx + \int_\Omega r_i(v) \log(v_i + \eps) dx,\\
 I_{3,i} & = - \int_\Omega \frac{\nabla(u_i+v_i)}{u_i + v_i + 2\eps}\Big(  pq \nabla (u_i+ v_i) 
   + (u_i+v_i)  p^2 \nabla \big(  \frac{q}{p}\big) \Big) dx \\
     & \qquad+ \int_\Omega\big(  r_i(u)+r_i(v) \big)  
    \log \Big( \frac{u_i+v_i}{2} + \eps \Big) dx.
\end{align*}
By rearranging the terms we get 
\begin{align}\label{dt.dist}
&\frac{d}{dt} \db_\eps(u,v)  = \mathcal{F} + S + S_r,\\
\nonumber
&\mathcal{F} = -\sum_{i=1}^n 4 \int_\Omega \Big( |\nabla\sqrt{u_i+\eps}|^2 +  |\nabla\sqrt{v_i+\eps}|^2 - | \nabla\sqrt{u_i+v_i+2\eps}|^2  \Big) pq\, dx,\\
\nonumber
 & S =  - \sum_{i=1}^n \int_\Omega \Big( \frac{u_i}{u_i + \eps}  - \frac{u_i+v_i}{u_i+v_i+2\eps} \Big) p^2 \nabla u_i \cdot \nabla \big( \frac{q}{p}\big)\, dx \\ 
 \nonumber
 &\qquad  -  \sum_{i=1}^n \int_\Omega \Big(  \frac{v_i}{v_i + \eps} - \frac{u_i+v_i}{u_i+v_i+2\eps}  \Big) p^2 \nabla v_i \cdot \nabla \big( \frac{q}{p}\big) \,dx ,\\
 \nonumber
 & S_r  =  \sum_{i=1}^n \int_\Omega r_i(u) \log(u_i + \eps) dx +  \sum_{i=1}^n \int_\Omega r_i(v) \log(v_i + \eps) dx  \\
 \nonumber
  &\qquad -  \sum_{i=1}^n \int_\Omega \big(  r_i(u)+r_i(v) \big) \log \Big( \frac{u_i+v_i}{2} + \eps \Big) dx.
\end{align}
Using the fact that the Fisher information $\int_\Omega |\nabla u^{1/2}|^2 dx$ is subadditive (see \cite{ZaJu17}, Lemma 9), 
we get that $\mathcal F \leq 0$. Therefore, integrating \eqref{dt.dist} in time leads to
\begin{align}
 \db_\eps(u(t),v(t)) \leq \int_0^t S(s) ds + \int_0^t S_r(s) ds.
\label{uniq.1}
\end{align}
 Firstly, we treat the second 
  integral in \eqref{uniq.1}. 
We want to prove that
 \begin{align}
 \int_0^t S_r(s) ds \leq C\int_0^t \db_\eps(u(s),v(s))ds
 \label{uniq.6}
  \end{align}
 By taking into account the assumptions on the reaction terms given by \eqref{assum.react.0}--\eqref{assum.react.2} and the fact that $M_1=M_2$,
 the left-hand side of \eqref{uniq.6} can be rewritten as
\begin{align*}
& \int_0^t S_r(s) ds
  =\sum_{i=1}^n \int_0^t  \int_\Omega  r_i^{(0)}(M)\left( \log(u_i+\eps) + \log(v_i+\eps) - 2\log\left(\frac{u_i+v_i}{2}+\eps\right) \right)dx ds\\
  &+\sum_{i=1}^n \int_0^t  \int_\Omega  r^{(1)}(M)\left( u_i\log(u_i+\eps) + v_i\log(v_i+\eps) - (u_i+v_i)\log\left(\frac{u_i+v_i}{2}+\eps\right) \right)dx ds.
 \end{align*}
 Furthermore, from the definition of $\db_\eps(u,v)$ it follows
 \begin{align*}
  & \int_0^t S_r(s) ds\\
  &=\sum_{i=1}^n \int_0^t  \int_\Omega  (r_i^{(0)}(M)-\eps r^{(1)}(M))\left( \log(u_i+\eps) + \log(v_i+\eps) - 2\log\left(\frac{u_i+v_i}{2}+\eps\right) \right)dx ds\\
  &+\sum_{i=1}^n \int_0^t  \int_\Omega  r^{(1)}(M)\left( \xi_\eps(u_i) + \xi_\eps(v_i) - 2\xi_\eps\left( \frac{u_i + v_i}{2}  \right)  \right)dx ds\\
  &\leq\sum_{i=1}^n \int_0^t  \int_\Omega  (r_i^{(0)}(M)-\eps r^{(1)}(M))\left( \log(u_i+\eps) + \log(v_i+\eps) - 2\log\left(\frac{u_i+v_i}{2}+\eps\right) \right)dx ds\\
  &+C\sum_{i=1}^n \int_0^t  \db_\eps(u(s),v(s))ds ,
 \end{align*}
 where the last inequality comes from the fact that $r^{(1)}$ is upper bounded, which is a straightforward consequence of 
 \eqref{assum.react.0}--\eqref{assum.react.2}. 
 The convexity of $x\mapsto -\log(x)$ implies that 
 $$\log(u_i+\eps) + \log(v_i+\eps) - 2\log\left(\frac{u_i+v_i}{2}+\eps\right)\leq 0. $$
 Together with the assumptions on the reaction terms, we deduce that \eqref{uniq.6} holds.

From \eqref{uniq.1}, \eqref{uniq.6} it follows
  \begin{align*}
  \db_\eps(u(t),v(t)) & \leq 
   \int_0^t S(s) ds + C\sum_{i=1}^n \int_0^t  \db_\eps(u(s),v(s))ds. 
 \end{align*}
 Using Gronwall's Lemma yields
  \begin{align}
  \db_\eps(u(t),v(t)) & \leq e^{Ct }\sum_{i=1}^n \int_0^t \int_\Omega \Big| \frac{u_i}{u_i + \eps} - \frac{u_i+v_i}{u_i+v_i+2\eps} \Big| p^2 |\nabla u_i| \left|\nabla \left( \frac{q}{p}\right)\right| dxds \nonumber\\
  & + e^{Ct} \sum_{i=1}^n \int_0^t \int_\Omega \Big|  \frac{v_i}{v_i + \eps} - \frac{u_i+v_i}{u_i+v_i+2\eps}  \Big| p^2 |\nabla v_i| \left| \nabla \left( \frac{q}{p}\right)\right| \,dx ds .
 \label{uniq.1.1}
 \end{align}

Next, we want to apply the dominated convergence theorem to show that the right-hand side of \eqref{uniq.1.1} converges to zero as $\eps \to 0$. Let $g \in \{u_i,v_i\}$.
It is obvious that 
\begin{align*}
 \frac{g}{g + \eps} - \frac{u_i+v_i}{u_i+v_i+2\eps} &\to 0 \quad \textrm{ a.e. in $\{g>0\}$ as }\;  \eps \to 0,\\
 \big\vert \frac{g}{g + \eps} - \frac{u_i+v_i}{u_i+v_i+2\eps} \big\vert &\leq 2 \quad\mbox{a.e. in }\Omega\times (0,\infty).
\end{align*}
Therefore, in order to apply the dominated convergence theorem we need to show that
\[ p^2 \nabla g \cdot \nabla \Big( \frac{q}{p}\Big) \in L^1(\Omega \times (0,T)). \]
We make the calculation for $g = u_i$. For $g=v_i$ the calculation is completely the same.
The term of interest can be rewritten as follows
\begin{align}
  p^2 \nabla u_i \cdot \nabla \left( \frac{q}{p}\right) =  2 \sqrt{pq}\nabla \sqrt{u_i}\cdot \frac{\sqrt{u_i}}{\sqrt{pq}}
  p^2  \nabla \left( \frac{q}{p}\right).
\label{uniq.4}
\end{align}
Since $2 \sqrt{pq}\nabla \sqrt{u_i} \in L^2(\Omega \times (0,T))$ due to \eqref{bound.nau}, it remains to show that
$\frac{\sqrt{u_i}}{\sqrt{pq}}
  p^2  \nabla \big( \frac{q}{p}\big)  \in L^2(\Omega \times (0,T))$.
 For $u_i>0$ (and $v_i>0$ respectively), we have 
  \begin{align}
 \frac{\sqrt{u_i}}{\sqrt{pq}} p^2  \nabla \big( \frac{q}{p}\big) 
  = 2 \sqrt{u_i} p \nabla  \sqrt{\frac{q}{p}} 
  = 2p\nabla \sqrt{\frac{u_i q}{p}} - 2p \sqrt{\frac{q}{p}} \nabla \sqrt{u_i}. 
 \label{uniq.5}
 \end{align}
From Lemma~\ref{bound.nabla} and \eqref{entr.inequ.2} we conclude that
\[  \frac{\sqrt{u_i}}{\sqrt{pq}}
  p^2  \nabla \big( \frac{q}{p}\big) \in   L^2(\Omega \times (0,T)),  \]
 obtaining in this way finally that for $u_i, v_i > 0$ 
  \[  p^2 \nabla u_i \cdot \nabla \Big( \frac{q}{p}\Big)  \in   L^1(\Omega \times (0,T)).   \]
It remains to treat the case when $u_i = 0$ (or respectively $v_i = 0$). We want to show that
\[  p^2 \nabla u_i \cdot \nabla \Big( \frac{q}{p}\Big) = 0 \quad \textrm{ a.e. in the set } \{u_i = 0\}. \]
For this, we make the following estimate
 \begin{align*}
	    p^2|\nabla u_i|\left|\nabla\left(\frac{q}{p} \right)\right| 
\leq 4 \left(\left|\nabla \sqrt{u_i pq}\right| +\left|\sqrt{u_i}\nabla \sqrt{pq}\right|\right)\left(\left|p\nabla \sqrt{\frac{u_iq}{p}}\right| + \sqrt{pq}|\nabla \sqrt{u_i}|\right).
\end{align*}
    Now, due to \cite[p.153, 6.18 Corollary]{LL01}, it holds that 
      \begin{align*}
	    \nabla \sqrt{u_i pq} = 0 \; \textrm{ where } \; \sqrt{u_ipq}=0.
      \end{align*}
It follows that $\nabla \sqrt{u_i pq} = \sqrt{u_i}\nabla \sqrt{pq}=0$ (and a fortiori $p^2|\nabla u_i|\left|\nabla\left(q/p\right)\right|=0$) a.e. in the set $\{u_i=0\}$.
In this way, since $d_\eps$ is nonnegative, we get that $d_\eps(u(t),v(t)) \to 0$ as $\eps \to 0$, 
 which implies that
 \begin{align}
  \xi_\eps(u_i) + \xi_\eps(v_i) - 2\xi_\eps\Big( \frac{u_i+v_i}{2} \Big) \to 0\; \textrm{ as } \; \eps \to 0 \; \textrm{ a.e. in } \;
  \Omega \times(0,\infty).
 \label{uniq.2}
 \end{align}
 Same calculation like in \cite[p.26]{ZaJu17} gives the estimate
 \begin{align}
  \xi_\eps(u_i) + \xi_\eps(v_i) - 2\xi_\eps\Big( \frac{u_i+v_i}{2} \Big) \geq \frac{1}{8}(u_i-v_i)^2.
 \label{uniq.3}
 \end{align}
 Finally, \eqref{uniq.2} and \eqref{uniq.3} give that $u_i = v_i$ in $\Omega \times (0,\infty)$ 
 for $i=1,\ldots,n$. This concludes the proof of Theorem~\ref{Thm.Uniqueness}.
 $\hfill \Box$
 \begin{remark}\label{Remark.Neumann.uniq}
 \textnormal{The uniqueness result holds trivially (under the same assumptions) also
 if nonconstant Dirichlet data or homogeneous Neumann boundary conditions are considered.}
 \end{remark}

\section{Appendix}\label{sec.appendix}

\subsection{Formal derivation of the multi-species biofilm model from a spatially discrete lattice model}\label{Sec.Formal.der}
Here we discuss the modeling assumptions, which are strongly connected to the derivation of the multi-species biofilm model from a spatially discrete lattice ODE. More details on the derivation can be found in \cite{RSE16, ZaJu17, RaSuEb15}.
For simplicity, we sketch the derivation in 1D. Given a one-dimensional spatial lattice containing equidistant cells $x_j$ with cell distance 
$h=x_j-x_{j-1}$ of a finite interval, we consider the variables $u_i^j:=u_i(x_j)$, which model the density of the $i$th species at the $j$th grid cell. Moreover, transition rates $T_i^{j\pm}$ describe how species $u_i$ moves from cell $x_j$ to the neighboring cells $x_{j \pm 1}$. Biofilm movement into neighboring cells is driven by two principles: \textit{volume filling} and \textit{quenching} \cite{KHE09}. \textit{Volume filling} means that the movement depends on the available space in the local site, and since the site's capacity of accommodation mass is limited, we can normalize the population densities with respect to their maximum densities, which means that $u_i^j\leq 1$. Thus we can interpret $u_i^j$ as the volume fraction of site $j$ occupied by the species $u_i$. The discrete master equation, which describes the balance between the density of populations which leave the site to move into the neighboring sites, and the density of populations which arrive from neighboring sites, reads as 
  \begin{align}\label{master.equation}
    \partial_t u_i^j = T_i^{(j-1)+}u_i^{j-1} + T_i^{(j+1)-}u_i^{j+1} - \left(T_i^{j+} + T_i^{j-}\right) + r_i^j
  \end{align}
where $r_i^j=r_i^j(u^j)$ is the net growth rate of the $i$th species, and the transition rates have the form
  \begin{align*}
    T_i^{j\pm} = \alpha_i q_i(u_1^j, \ldots, u_n^j)p_i(u_1^{j\pm 1}, \ldots, u_n^{j\pm 1}),
  \end{align*}
where $\alpha_i=\alpha_i(h)$ measure how fast populations move between neighboring cells, and the nonnegative continuous transition functions $q_i$ and $p_i$ describe the local movement of the species from one cell to the other. The transition function $q_i(u)$ measures the incentive of the density of species $i$ at grid cell $x_j$ to leave the cell $x_j$, and $p_i(u_1^{j\pm 1}, \ldots, u_n^{j\pm 1})$ models the attractivity of the cell population $u_i^j$ for the incoming individuals $u_i^{j\pm1}$ from the neighboring sites $j\pm 1$. The second principle for biofilm movement is that as long as there is capacity to accommodate new biomass locally in that cell, the incentive to move to a neighboring cell is small, which is called \textit{quenching}. The transition from the spatially discretized to the continuous model is now performed in a formal diffusive limit. First, we interpolate the grid functions by setting 
$$u_i(t,x)=u_i^j(t), \quad  \mbox{for} \quad x_j\leq x \leq x_{j+1}.$$
Now, assuming sufficient smoothness of the functions $u_1,\ldots, u_n$, we can approximate $u_i(t, x^{j\pm1})$, $q(u(t,x^{j\pm 1}))$, $p(u(t,x^{j\pm 1}))$ by second order Taylor polynomials around $u(t,x_j)$. By substituting all these expressions into the master equation \eqref{master.equation} and performing the (formal) diffusive limit $h \to 0$ under the assumption that $\lim_{h \to 0}\alpha_i h^2 = \alpha_{i0}>0$, we get for $u=(u_1, \ldots, u_n)$ the equation
\begin{align*}
  \pa_t u_i = \alpha_{i0} \frac{\pa}{\pa x} \left(\sum_{j=1}^n A_{ij}(u)\frac{\pa u_j}{\pa x}\right) + r_i(u), \quad i=1,\ldots,n,
\end{align*}
 where the diffusion coefficients have the form
 \begin{align}\label{tilde.A}
 A_{ij}(u)= \delta_{ij} p_i(u)q_i(u) + u_i \left(p_i(u)\frac{\pa q_i}{\pa {u_j}}(u) -  q_i(u)\frac{\pa p_i}{\pa {u_j}}(u)\right).
 \end{align}
 In more than one dimension, the same procedure can be applied, leading to the system
 \begin{align*}
   \pa_t u_i = \alpha_{i0} \Div\left(\sum_{j=1}^n A_{ij}(u)\nabla u_j\right) + r_i(u), \quad i=1,\ldots,n,
 \end{align*}
where $A$ is defined in \eqref{tilde.A}.

\subsection{Numerical illustration of the relative entropy}\label{Sec.Num}
In this subsection we present numerical simulations of the relative entropy given by \eqref{H} with the respect to time. 
 We take a three-species model \eqref{cross.diff.sys.00} on the rectangular domain $\Omega = [0,1]\times[0,1]$.
The solution is calculated using the Distributed and Unified Numerics Environment is a modular toolbox for solving partial differential equations (PDEs) with grid-based methods DUNE  \cite{DUNE_Publ}.  For calculating the solution we use standard FEM on the rectangular grid with $Q_1$-elements. More precisely, we used the following Dune modules:
1. Core modules (dune-grid, dune-geometry, dune-localfunctions, dune-common, dune-istl); 2. Discretization modules (dune-fem, dune-pdelab).
The initial conditions are:
\begin{align*}
u_1(x,y;0) = \begin{cases}
            u_{D,1} + \varepsilon,\quad 0.2 \leq x \leq  0.5 \; \textrm{ and }\; 0 \leq y \leq  0.2,\\
           u_{D,1}, \quad \textrm{otherwise},
  \end{cases}
\end{align*}
\begin{align*}
u_2(x,y;0) = \begin{cases}
           u_{D,2} + \varepsilon,\quad 0.5 \leq x \leq  0.8 \; \textrm{ and }\; 0 \leq y \leq  0.2,\\
           u_{D,2}, \quad \textrm{otherwise},
  \end{cases}
\end{align*}
\begin{align*}
u_3(x,y;0) = \begin{cases}
           u_{D,3} + \varepsilon,\quad 0.2 \leq x \leq  0.8 \; \textrm{ and }\; 0 \leq y \leq  0.2,\\
            u_{D,3}, \quad \textrm{otherwise},
  \end{cases}
\end{align*}
Here, $u_{D,i} = \varepsilon$, $i=1,2,3$ where $\varepsilon = 0.1$, the time-step is $dt = 10^{-4}$ and the final time $T_{\textrm{fin}} = 10$.
For the function $p(M)$ we chose:
\begin{align}
p(M) = \exp\Big({-\frac{1}{(1-M)^\kappa}}\Big), \quad M = u_1 + u_2 + u_3.
\label{p(s)}
\end{align}

Concerning different choices of parameters $a$, $b$ from \eqref{Eq.1.0} and $\kappa$ in \eqref{p(s)} as well as different boundary conditions (mixed Dirichlet-Neumann or homogeneous Neumann), we performed the following two tests.\smallskip\\
{\bf Test 1.} With this test we wanted to illustrate numerically our analytical result given by Theorem~\ref{Thm.LTB}. For that purpose here we considered our model with mixed Dirichlet-Neumann boundary conditions. More precisely, we took the Dirichlet boundary conditions $u_{D_i}$, $i=1,2,3$ on the upper-side of the rectangle and homogeneous Neumann on other three sides.
 For parameters we took $a = 2$, $b=2$, and $ \kappa = 1$ and we used $r_i^D = u_{D,i} - u_i$ as the reaction terms.
We note that Figure~\ref{FigNumSim} (left) shows the exponential convergence of the solution to the steady state, which we were not able to obtain with our analytical tools. \smallskip \\
{\bf Test 2.} This test corresponds to Remark~\ref{Remark.Neumann.LTB}. Here we considered the homogeneous Neumann boundary conditions without any reaction terms. For parameters we took $a = 1$, $b=2$ and  $\kappa = 0.9$.
In Figure~\ref{FigNumSim} (right), we observe very fast stabilization of the solution to the constant steady-state.
\begin{figure}\label{FigNumSim}
\includegraphics[width=7cm]{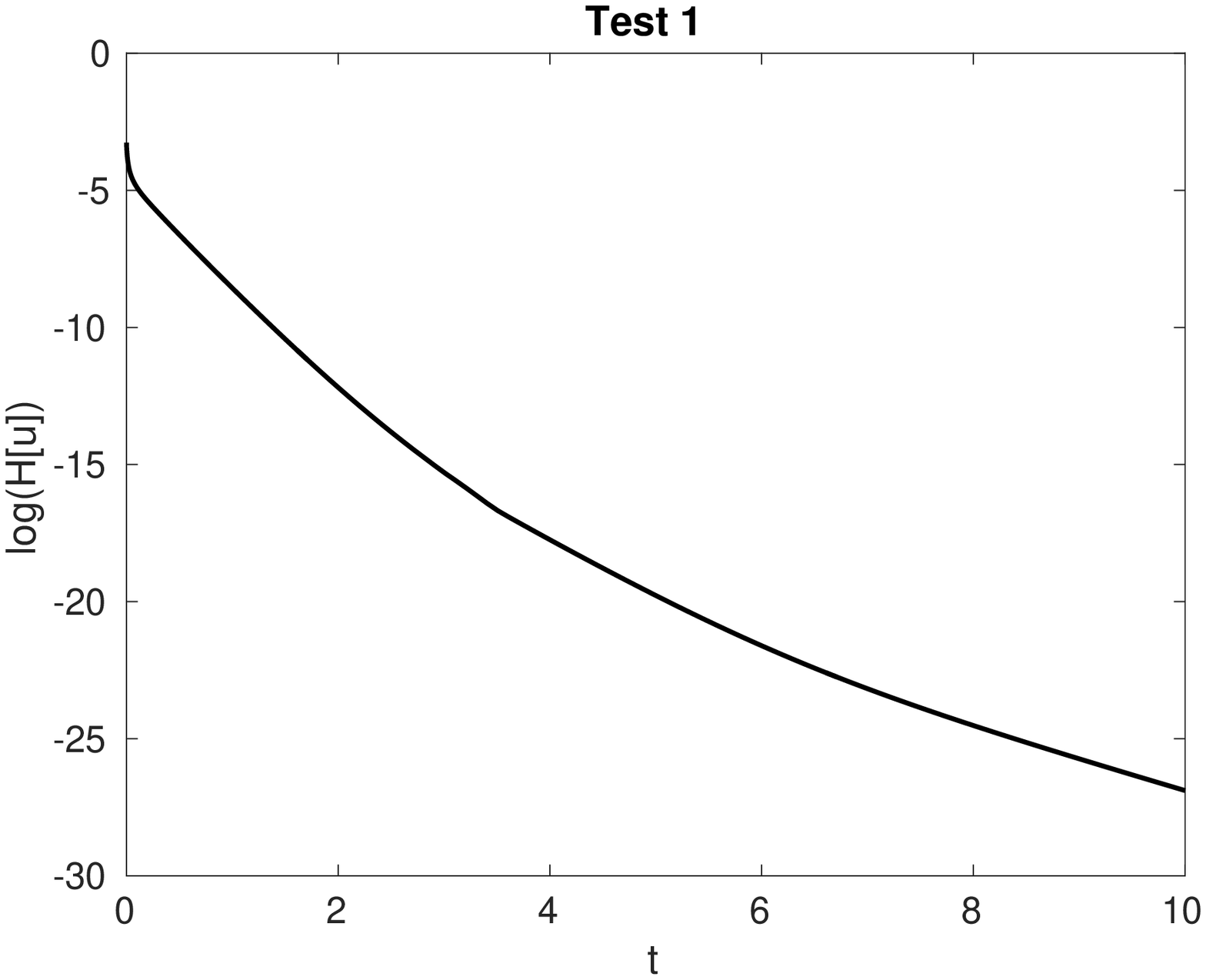}
\includegraphics[width=7cm]{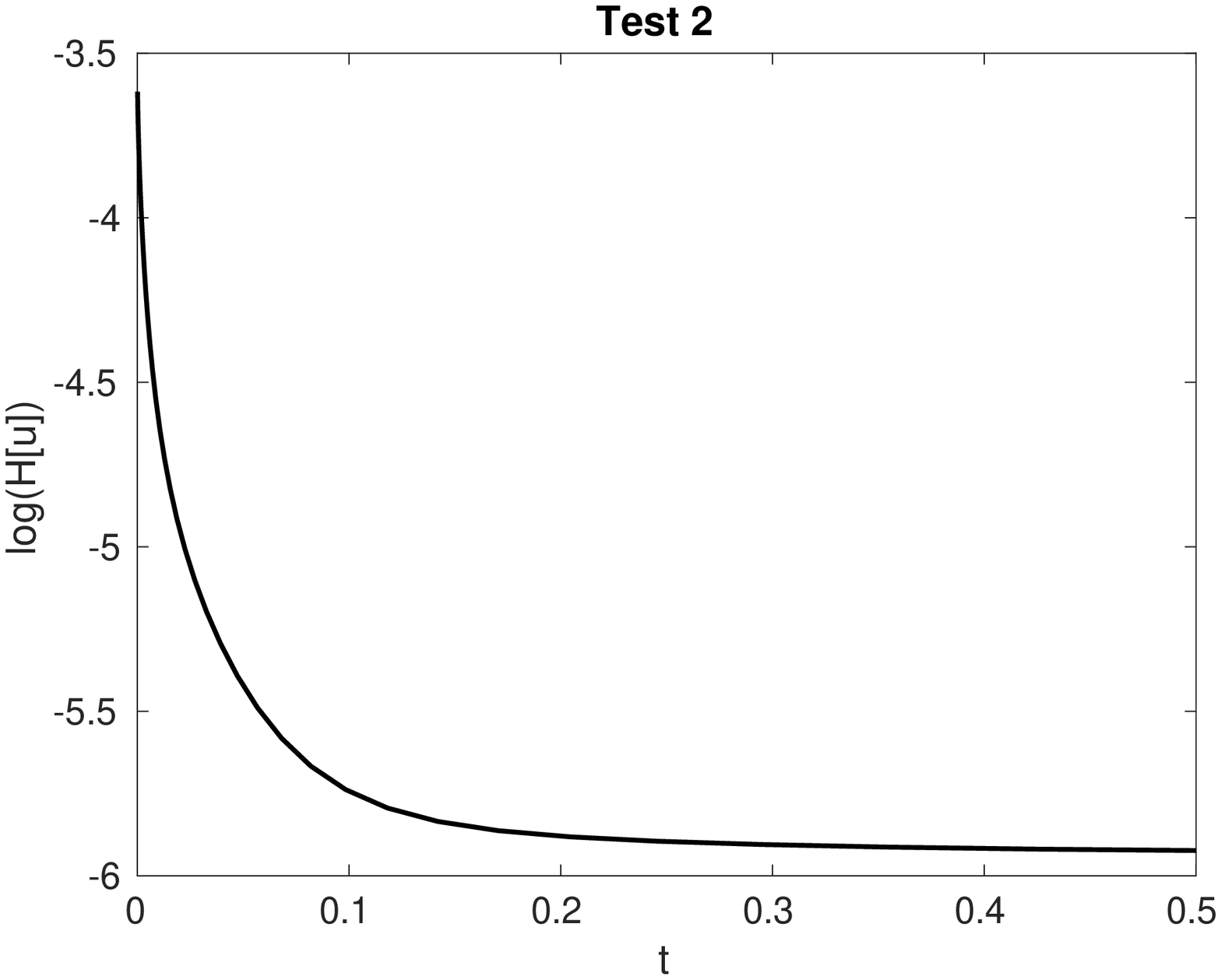}
\caption{Logarithm of the relative entropy vs time}
\end{figure}

Summarizing up, it seems that in the situation described by Theorem~\ref{Thm.LTB} the convergence to the steady state is actually exponential.
The algebraic decay result proved in the Theorem might be not optimal, most likely due to limitations in the analytical methods employed in the proof. 
Furthermore, convergence to the steady state appears to
be helped by a reaction term with a suitable (dissipative) structure (as it is to be expected): the rate of convergence is higher in Test 1 than in Test 2,
and the relative entropy reaches much smaller values in Test 1 than in Test 2. In fact, the relative entropy seems to stabilize around $10^{-6}$ in Test 2, which might be a symptom of 
numerical instability.

\subsection{Additional auxiliary result}\label{appendix: aux.res} 
\begin{lemma}[Variant of the Poincar\'e Inequality]\label{lem.Poi.sing}
 Let $\Omega\subset\R^d$ bounded open and connected. Let $\lambda\in (0,1)$, $\alpha>0$. There exists a constant $C>0$
 such that 
 $$  \|(1-M)^{-\alpha}\|_{L^2(\Omega)}\leq C\left(1+\|\na (1-M)^{-\alpha}\|_{L^2(\Omega)}\right ) $$
 for every function $M\in H^1(\Omega)$ such that $0\leq M < 1$ a.e. in $\Omega$ and $|\Omega|^{-1}\int_\Omega M dx \leq \lambda$.
\end{lemma}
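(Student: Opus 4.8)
The plan is to reduce the estimate to the classical Poincar\'e--Wirtinger inequality on $\Omega$ by gaining control on the mean value of $g:=(1-M)^{-\alpha}$. We may assume $g\in H^1(\Omega)$ (this is the case in all applications, where $M$ is bounded away from $1$; otherwise $\|\na g\|_{L^2(\Omega)}=+\infty$ and there is nothing to prove). Note that $g\geq 1$ a.e.\ because $0<1-M\leq 1$, so in particular $\bar g:=|\Omega|^{-1}\int_\Omega g\,dx\in[1,\infty)$.

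The crucial point is that the hypothesis $|\Omega|^{-1}\int_\Omega M\,dx\leq\lambda$ must be transferred to $g$ by producing a set of definite positive measure on which $g$ is a priori bounded. Since $\int_\Omega(1-M)\,dx\geq(1-\lambda)|\Omega|$ and $0\leq 1-M\leq 1$, a Chebyshev-type splitting of this integral over $A:=\{x\in\Omega:1-M(x)\geq(1-\lambda)/2\}$ and its complement yields $|A|\geq\delta:=(1-\lambda)|\Omega|/2>0$, while on $A$ one has $g\leq K:=\bigl(2/(1-\lambda)\bigr)^{\alpha}$. I would then bound $\bar g$ by writing $\int_A g\,dx=\int_A(g-\bar g)\,dx+\bar g\,|A|$, using $\int_A g\,dx\leq K|A|\leq K|\Omega|$ on the left and estimating $\bigl|\int_A(g-\bar g)\,dx\bigr|\leq|\Omega|^{1/2}\|g-\bar g\|_{L^2(\Omega)}\leq C_P|\Omega|^{1/2}\|\na g\|_{L^2(\Omega)}$ by Cauchy--Schwarz and Poincar\'e--Wirtinger (with constant $C_P=C_P(\Omega)$). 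This gives $\bar g\,|A|\leq K|\Omega|+C_P|\Omega|^{1/2}\|\na g\|_{L^2(\Omega)}$, hence $\bar g\leq\delta^{-1}\bigl(K|\Omega|+C_P|\Omega|^{1/2}\|\na g\|_{L^2(\Omega)}\bigr)$.

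To conclude, I would combine this with Poincar\'e--Wirtinger once more:
\[
\|g\|_{L^2(\Omega)}\leq\|g-\bar g\|_{L^2(\Omega)}+|\Omega|^{1/2}\bar g\leq C_P\|\na g\|_{L^2(\Omega)}+|\Omega|^{1/2}\delta^{-1}\bigl(K|\Omega|+C_P|\Omega|^{1/2}\|\na g\|_{L^2(\Omega)}\bigr),
\]
which is exactly of the claimed form $C\bigl(1+\|\na g\|_{L^2(\Omega)}\bigr)$ with $C$ depending only on $\Omega$, $\lambda$ and $\alpha$. There is no genuine obstacle in this argument; the only step requiring attention is the first reduction, where one must feed the mean-value bound on $M$ (rather than on $g$) into the inequality through the auxiliary set $A$, on which $g$ is controlled. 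One should also recall that the Poincar\'e--Wirtinger inequality used here relies on the standing regularity assumption on $\partial\Omega$.
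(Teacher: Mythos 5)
Your argument is correct, but it takes a genuinely different route from the paper. The paper proves the lemma by contradiction and compactness: assuming the inequality fails along a sequence $M_k$, it normalizes $f_k=(1-M_k)^{-\alpha}/\|(1-M_k)^{-\alpha}\|_{L^2(\Omega)}$, uses $\|\na f_k\|_{L^2(\Omega)}\to 0$ and the compact embedding $H^1(\Omega)\hookrightarrow L^2(\Omega)$ to extract a strong limit, shows the limit is a positive constant, deduces $M_k\to 1$ a.e.\ and hence $|\Omega|^{-1}\int_\Omega M_k\,dx\to 1$, contradicting the mean-value bound $\leq\lambda<1$. This is short but non-quantitative: the constant $C$ is not explicit. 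Your proof is direct: the Chebyshev splitting produces the set $A=\{1-M\geq(1-\lambda)/2\}$ with $|A|\geq(1-\lambda)|\Omega|/2$ on which $g=(1-M)^{-\alpha}\leq(2/(1-\lambda))^{\alpha}$, and then two applications of Poincar\'e--Wirtinger control first the mean $\bar g$ and then $\|g\|_{L^2(\Omega)}$, yielding an explicit constant $C=C(\Omega,\lambda,\alpha)$ and avoiding any compactness or contradiction argument. Both routes need the same boundary regularity beyond ``bounded open connected'' (Rellich--Kondrachov in the paper, Poincar\'e--Wirtinger in yours), which you correctly flag. The only step you gloss is the reduction to $g\in H^1(\Omega)$: it is not literally true that $g\notin H^1(\Omega)$ forces $\|\na g\|_{L^2(\Omega)}=+\infty$, since a priori one could have the chain-rule gradient in $L^2$ without knowing $g\in L^2$; a clean fix is to apply your estimate to the truncations $M_N=\min\{M,1-N^{-1/\alpha}\}$ (which satisfy the same hypotheses and make $g$ bounded) and let $N\to\infty$ by monotone convergence. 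The paper's proof makes the same implicit regularity assumption on $(1-M_k)^{-\alpha}$, so this is a refinement rather than a gap specific to your approach.
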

\begin{proof}
 By contradiction. Assume $\forall k\geq 1$ there exists $M_k\in H^1(\Omega)$ such that 
 $0\leq M_k < 1$ a.e. in $\Omega$, $|\Omega|^{-1}\int_\Omega M_k dx \leq \lambda$, and
 $$  \|(1-M_k)^{-\alpha}\|_{L^2(\Omega)}> k\left(1+\|\na (1-M_k)^{-\alpha}\|_{L^2(\Omega)}\right ) $$
 for all $k\geq 1$.
 Let us define $f_k = \frac{(1-M_k)^{-\alpha}}{\|(1-M_k)^{-\alpha}\|_{L^2(\Omega)}}$. It follows that $\|f_k\|_{L^2(\Omega)}=1$,
 $$ \|\na f_k\|_{L^2(\Omega)}<\frac{1}{k}-\frac{1}{\|(1-M_k)^{-\alpha}\|_{L^2(\Omega)}} . $$
 Clearly $\|(1-M_k)^{-\alpha}\|_{L^2(\Omega)}\to\infty$ as $k\to\infty$, which in particular implies that
 $\|\na f_k\|_{L^2(\Omega)}\to 0$ as $k\to\infty$. Moreover $f_k$ is bounded in $H^1(\Omega)$, which by compact Sobolev embedding implies
 that, up to subsequences, $f_k\to f$ strongly in $L^2(\Omega)$. Since $\|\na f_k\|_{L^2(\Omega)}\to 0$ as $k\to\infty$ it follows that
 $$
 \int_\Omega f\pa_{x_i}\phi dx = \lim_{k\to\infty}\int_\Omega f_k\pa_{x_i}\phi dx = 
 -\lim_{k\to\infty}\int_\Omega \pa_{x_i}f_k\,\phi dx = 0\qquad\forall \phi\in H^1_0(\Omega).
 $$
 This means that $f$ is constant. Moreover, the fact that $\|f_k\|_{L^2(\Omega)}=1$ for all $k$, together with the strong convergence of $f_k$,
 implies that $f>0$. However, up to subsequences, $f_k\to f$ a.e. in $\Omega$, i.e.
 $$
 \frac{(1-M_k)^{-\alpha}}{\|(1-M_k)^{-\alpha}\|_{L^2(\Omega)}}\to f\qquad \mbox{a.e. in }\Omega.
 $$
 Since $\|(1-M_k)^{-\alpha}\|_{L^2(\Omega)}\to\infty$ as $k\to\infty$, we deduce that $M_k\to 1$ a.e. in $\Omega$, and therefore (by dominated
 convergence) also strongly in $L^1(\Omega)$. As a consequence $|\Omega|^{-1}\int_\Omega M_k dx\to 1$. This is a contradiction to the fact that
 $|\Omega|^{-1}\int_\Omega M_k dx \leq \lambda<1$ for all $k\geq 1$. This finishes the proof.
\end{proof}


\end{document}